\newcommand{\lleft}{\left}
\newcommand{\rright}{\right}
\newtheorem{thm}{Theorem}
\newtheorem{lemma}{Lemma}
\newtheorem{cor}{Corollary}
\newtheorem{proposition}{Proposition}
\theoremstyle{definition}
\newtheorem{defin}{Definition}
\newtheorem{remark}{Remark}
\newcommand{\rrVert}{\Vert}
\newcommand{\llVert}{\Vert}
\newcommand{\rrvert}{\vert}
\newcommand{\llvert}{\vert}
\def\index#1{}
\begin{document}

\begin{frontmatter}
\pretitle{Research Article}

\title{Spatial quadratic variations for the solution to a stochastic partial differential equation with elliptic divergence form operator}

\author{\inits{M.}\fnms{Mounir}~\snm{Zili}\thanksref{cor1}\ead[label=e1]{Mounir.Zili@fsm.rnu.tn}\orcid{0000-0002-6601-3926}}
\author{\inits{E.}\fnms{Eya}~\snm{Zougar}\ead[label=e2]{Eya.Zougar@fsm.rnu.tn}}
\thankstext[type=corresp,id=cor1]{Corresponding author.}
\address{\institution{University of Monastir},
Faculty of sciences of Monastir,
Department of Mathematics, LR18ES17,
Avenue de l'environnement,
5019 Monastir,
\cny{Tunisia}}

%\thankstext[id=f1]{}

%\dedicated{}

%\markboth{Authors}{Title}
\markboth{M. Zili, E. Zougar}{Spatial quadratic variations for a SPDE's solution}

\begin{abstract}
We introduce a stochastic partial differential equation (SPDE) with
elliptic operator in divergence form, with measurable and bounded
coefficients and driven by space-time white noise. Such SPDEs could be
used in mathematical modelling of diffusion phenomena in medium
consisting of different kinds of materials and undergoing stochastic
perturbations. We characterize the solution and, using the
Stein--Malliavin calculus, we prove that the sequence of its recentered
and renormalized spatial quadratic variations satisfies an almost sure
central limit theorem. Particular focus is given to the interesting case
where the coefficients of the operator are piecewise constant.
\end{abstract}
\begin{keywords}
\kwd{Stochastic partial differential equations}
\kwd{divergence form}
\kwd{piecewise constant coefficients}
\kwd{fundamental solution}
\kwd{Stein-Malliavin calculus}
\kwd{almost sure central limit theorem}
\end{keywords}
\begin{keywords}[MSC2010]%
\kwd{60H15}
\kwd{60G15}
\kwd{60H05}
\kwd{35A08}
\end{keywords}

\received{\sday{25} \smonth{4} \syear{2019}}% Updated by VTEXPTS2LaTeX.exe, 13.09.2019 09:27
\revised{\sday{3} \smonth{9} \syear{2019}}% Updated by VTEXPTS2LaTeX.exe, 13.09.2019 09:27
\accepted{\sday{3} \smonth{9} \syear{2019}}% Updated by VTEXPTS2LaTeX.exe, 13.09.2019 09:27
\publishedonline{\sday{3} \smonth{10} \syear{2019}}
\end{frontmatter}

%s1 #&#
\section{Introduction}%
\label{sec1}
Many diffusion phenomena in %many
various fields of real life are modelled by the
following type of partial differential equations\index{partial differential equations (PDE)} (PDEs\index{partial differential equations (PDE)})
%
%e1 #&#
\begin{equation}
\label{e:1} \frac{\partial u(t, x)}{\partial t} = {\mathcal{L}} u(t, x),
\end{equation}
where ${\mathcal{L}}$ is the elliptic divergence form operator defined
by
%
%e2 #&#
\begin{equation}
\label{e:1.1} {\mathcal{L}} = \frac{1}{r(x)} \frac{d}{dx}
\biggl( R(x) \frac{d}{dx} \biggr) ,
\end{equation}
$R$ and $r$ are two measurable and bounded functions defined on
${\mathbb{R}}$ and satisfying
\begin{equation*}
\mu _{1} \le R(x) \quad \mbox{and} \quad \mu _{2} \le
r(x) \quad \mbox{for all} \ x \in {\mathbb{R}}
\end{equation*}
where $\mu _{1}$ and $\mu _{2}$ are two strictly positive real constants,
and $\frac{d}{dx}$ denotes the derivative in the distributional sense. More
information on PDEs\index{partial differential equations (PDE)} of the type (\ref{e:1}) and their applications can
be found, e.g., in \cite{eco,Nic,Osaka} and references therein. One
interesting example of such PDEs\index{partial differential equations (PDE)} is the one defined by
%
%e3 #&#
\begin{equation}
\label{e:1.2} {\mathcal{L}}_{p} = \frac{1}{2\rho (x)}
\frac{d}{dx} \biggl( \rho (x)A(x) \frac{d}{dx} \biggr) ,
\end{equation}
%
%e4 #&#
\begin{equation}
\label{eq:coefA} A(x)= a_{1} {\mathbf{1}}_{\{ x \le 0 \} } +
a_{2} {\mathbf{1}}_{\{ 0 <
x \} } \quad \text{and} \quad \rho (x)=
\rho _{1} {\mathbf{1}}_{\{ x
\le 0 \} } + \rho _{2} {
\mathbf{1}}_{\{ 0 < x \} },
\end{equation}
$a_{i}, \rho _{i}$ ($i=1, 2$) are strictly positive constants. Operators
of the kind (\ref{e:1.2}) are the infinitesimal generators of diffusion
processes that have been widely studied in literature (see
\cite{ETOR,Lej} and references therein). The discontinuity of the
coefficients $A$ and $\rho $ reflects the heterogeneity of the media in
which the modelled process under study propagates.

In the present paper, we introduce a stochastic partial differential
equation\index{stochastic partial differential equation (SPDE)} (SPDE\index{stochastic partial differential equation (SPDE)}), that can be considered as a stochastic counterpart of
PDE (\ref{e:1}). More specifically, we consider
%
%e5 #&#
\begin{equation}
\label{e:2g} \lleft \{ %
\begin{array}{@{}rcl}
\displaystyle
\frac{\partial u(t, x)}{\partial t} &=&
\displaystyle
{\mathcal{L}} u(t, x) +\dot{W} (t,x) ;
\quad
t > 0,  x\in { \mathbb{R}},
\\
\noalign{\vskip2mm}
\displaystyle
u(0,.) &:= &0,\;
\end{array} %
\rright .
\end{equation}
where ${\mathcal{L}}$ is defined by (\ref{e:1.1}) and $\dot{W}$ denotes
the formal derivative of a space-time white noise. That is, $W$ is a
centered Gaussian field $ W = \{ W(t,C); t \in [0,T ], C \in B_{b}(
{\mathbb{R}}) \}$ with covariance
%
%e6 #&#
\begin{equation}
\label{e:3_} {\mathbb{E}} \bigl(W(t,C)W(s,D) \bigr) = (t \wedge s) \lambda (C
\cap D),
\end{equation}
where $ \lambda $ denotes the Lebesgue measure and $B_{b}({\mathbb{R}})$
is the set of $\lambda $-bounded Borel sub-sets of ${\mathbb{R}}$. So
$W$ behaves as a Wiener process\index{Wiener  process} both in time and in space. The solution
to Equation (\ref{e:2g}) is a random field $\{ u(t,x), t \ge 0, x
\in {\mathbb{R}} \}$, where $t$ represents the time variable and $x$ is
the space variable. In the particular case where the functions $r$ and
$R$ are constants $r:= 2$ and $R:= 1$, the operator ${\mathcal{L}} $ is
reduced to $\frac{1}{2} \frac{\partial ^{2}}{\partial x^{2}}$. So
Equation (\ref{e:2g}) also represents a natural extension of the
stochastic heat equation\index{stochastic heat equation} driven by the space-time white noise, which has
been widely studied in the literature (see \cite{Tud1} and the
references therein). This can be considered as an important motivation
for the investigation of such equation's solution.

This paper has a twofold objective: the first is to lay the first
milestone towards the investigation of the stochastic process solution
to (\ref{e:2g}). We prove its existence, and\vadjust{\goodbreak} we investigate its spatial
quadratic variation. In fact, the study of quadratic variation is
motivated by its numerous applications in many fields. For example, in
the estimation theory, the analysis of the asymptotic behaviour of the
quadratic variations of self-similar processes play an important role
in the construction of consistent estimators for the self-similarity
parameter (see, \xch{e.g.,}{e.g} \cite{Tudor-Viens} and references therein). In
stochastic analysis, quadratic variations are as well one of the main
tools used to characterize the semi-martingale property for some mixed
Gaussian processes\index{Gaussian process} (see, e.g., \cite{EZ,MishuraZili,MZ2}). Examples
of applications of quadratic variation investigation include also the
theory of the It\^{o} stochastic calculus with respect to martingales
\cite{Revuz} and mathematical finance \cite{Bardorff}. We refer
to the monograph \cite{Tud1} for a more complete exposition on
variations of stochastic processes in general, and of solutions to
certain SPDEs in particular. In this paper, under some conditions on the
fundamental solution\index{fundamental solution} to PDE (\ref{e:1}), we fix $t$ and %we
study the
limit behaviour in distribution of the sequence $( \sum_{j=0} ^{N-1} ( u(t, \frac{j+1}{N}) - u(t,
\frac{j}{N})) ^{2} )_{N \ge 1}$. More precisely, using some
elements of the Stein--Malliavin calculus, we show that, after
recentralization and renormalization, the above sequence satisfies an
Almost Sure Central Limit\index{almost sure central limit theorem (ASCLT)} Theorem (in short: ASCLT\index{almost sure central limit theorem (ASCLT)}). Similar study has
been done in the case of stochastic heat equation\index{stochastic heat equation} (see
\cite{R,TT}) and also in the case of stochastic wave equation (see
\cite{CAZilKa}). But no similar study has been carried out on SPDEs\index{stochastic partial differential equation (SPDE)}
(\ref{e:2g}). For more information on ASCLT,\index{almost sure central limit theorem (ASCLT)} see
\cite{Azmoodeh-Nourdin} and references therein. The second objective of
this paper is to make a further study of the SPDE\index{stochastic partial differential equation (SPDE)} defined by\looseness=1
%
%e7 #&#
\begin{equation}
\label{e:2p} \lleft \{ %
\begin{array}{@{}rcl}
\displaystyle
\frac{\partial u(t, x)}{\partial t} &=&
\displaystyle
{\mathcal{L}}_{p} u(t, x) +\dot{W} (t,x) ;
\quad
t > 0,  x\in { \mathbb{R}},
\\
\noalign{\vskip2mm}
\displaystyle
u(0,.) &:= &0,\;
\end{array} %
\rright .
\end{equation}\looseness=0
where ${\mathcal{L}}_{p}$ is the operator defined by (\ref{e:1.2}). We
note that Equation (\ref{e:2p}) is a particular case of (\ref{e:2g}),
and it could be a good model for diffusion phenomena in a medium
consisting of two kind of materials, undergoing stochastic
perturbations. Equation (\ref{e:2p}) has been introduced in
\cite{ZZ} where the authors proved the existence of the solution and
they presented explicit expressions of its covariance and variance
functions. Some regularity properties of the solution sample paths have
also been analyzed. In \cite{ZZ2}, Zili and Zougar expanded the
quartic variations in time and the quadratic variations in space of the
solution to Equation (\ref{e:2p}). Both expansions allowed them to
deduce an estimation method of the parameters $a_{1}$ and $a_{2}$
appearing in (\ref{eq:coefA}). We make here another step in the study
of SPDE\index{stochastic partial differential equation (SPDE)} (\ref{e:2p}) by showing that its solution satisfies all
conditions under which we can use the ASCLT.\index{almost sure central limit theorem (ASCLT)} In addition to the
Stein--Malliavin calculus, our proofs require many integration
techniques, calculation, and analysis tools.\looseness=1

The paper is organized as follows. In the next section, we prove the
existence of the mild solution\index{mild solution} to Equation (\ref{e:2g}) and we give some
characterizations of its spatial increments. In \xch{Section \ref{sec3}}{section $3$}, using some
elements of the Stein--Malliavin theory, we establish an almost sure central
limit\index{almost sure central limit theorem (ASCLT)} theorem %satisfied by
which applies to the solution to SPDE\index{stochastic partial differential equation (SPDE)} (\ref{e:2g}) under some
conditions on the fundamental solution\index{fundamental solution} associated to the operator
${\mathcal{L}}$. The last section focuses on a further investigation of
the solution to SPDE\index{stochastic partial differential equation (SPDE)} (\ref{e:2p}).
In which case,
we show that
the ASCLT\index{almost sure central limit theorem (ASCLT)}
is statisfied.

%s2 #&#
\section{Existence and some characteristics of the solution}%
\label{sec2}
The notion of solution to (\ref{e:2g}) is defined in the mild sense.
We call a mild solution\index{mild solution} to (\ref{e:2g}) the stochastic process
%
%e8 #&#
\begin{equation}
\label{sol} u(t,x)= \int_{0} ^{t} \int
_{\mathbb{R} } G(t-s, x, y) W(ds, dy), \quad t\in [0,T], x\in
\mathbb{R},
\end{equation}
where $W$ is the Gaussian noise with covariance given by (\ref{e:3}),
$G$ is the fundamental solution\index{fundamental solution} of the operator ${\mathcal{L}}$ and the
integral in (\ref{sol}) is the Wiener integral\index{Wiener ! integral} with respect to the
Gaussian noise $W$. The existence and many properties of the fundamental
solution $G$\index{fundamental solution} of the operator ${\mathcal{L}}$ have been obtained in many
papers (see, e.g., \cite{Lejay} and \cite{Osaka}).

%r1 #&#
\begin{remark}
\label{remark1}
It is well known (see, e.g., \cite{walsh}) that the mild solution\index{mild solution}
to (\ref{e:2g}) exists when the Wiener integral\index{Wiener  integral} (\ref{sol}) is
well-defined and this happens when the function $(s,y) \longmapsto G(t-s, x, y)$
belongs to ${\mathcal{H}}_{0}=
L^{2}([0,T ] \times {\mathbb{R}})$, the canonical Hilbert space
associated with the Gaussian process\index{Gaussian process} $W$. In fact, ${\mathcal{H}}_{0}$
is none other than the closure of the linear span\index{linear span} generated by the
indicator functions $\mathbf{1}_{[0,t] \times C }$,\index{indicator functions} $ t \in [0,T ]$, $C
\in {\mathcal{B}}_{b}({\mathbb{R}})$, with respect to the inner product
\begin{equation*}
{<} 1_{[0,t]\times C}, 1_{[0,s] \times D}{>}_{{\mathcal{H}}_{0}} = (t \wedge s)
\lambda ( C \cap D).
\end{equation*}
Moreover, the process $(u(t, x), t \in [0,T ], x \in {\mathbb{R}})$, when it exists, is a
centered Gaussian process.
\end{remark}

The following proposition deals with the existence of the mild solution\index{mild solution}
to Equation (\ref{e:2g}).
%
%p1 #&#
\begin{proposition}
\label{exSol}
The centered Gaussian process $(u(t, x), t \in [0,T ], x \in {\mathbb{R}})$ defined by \textup{(\ref{sol})},
as a solution to Equation \textup{(\ref{e:2g})}, exists and satisfies
\begin{equation*}
\sup_{t \in [0,T], x \in {\mathbb{R}}} {\mathbb{E}} \bigl( u(t,x)^{2} \bigr)
< + \infty .
\end{equation*}
\end{proposition}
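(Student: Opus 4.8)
The plan is to reduce the whole statement to a single Gaussian integral estimate by invoking Remark~\ref{remark1}. That remark tells us the process $u(t,x)$ exists exactly when the kernel $(s,y)\mapsto G(t-s,x,y)$ belongs to $\mathcal{H}_0=L^2([0,T]\times\mathbb{R})$, and in that case, since $u(t,x)$ is a Wiener integral of a deterministic integrand, the Wiener isometry gives
\[
{\mathbb{E}}\bigl(u(t,x)^2\bigr)=\bigl\|G(t-\cdot,x,\cdot)\bigr\|_{\mathcal{H}_0}^2=\int_0^t\int_{\mathbb{R}}G(t-s,x,y)^2\,dy\,ds .
\]
Thus both assertions—existence and the uniform $L^2$-bound—follow at once provided the right-hand side is finite and controlled uniformly in $(t,x)\in[0,T]\times\mathbb{R}$. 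After the change of variable $\sigma=t-s$, this double integral becomes $\int_0^t\int_{\mathbb{R}}G(\sigma,x,y)^2\,dy\,d\sigma$, so everything hinges on the size of $G(\sigma,x,y)^2$ for small $\sigma$.

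The essential ingredient is the Aronson-type Gaussian upper bound for the fundamental solution of a uniformly elliptic divergence-form operator. Since $R$ and $r$ are measurable, bounded, and bounded below by $\mu_1,\mu_2>0$, the operator $\mathcal{L}$ is uniformly elliptic with bounded coefficients, and the classical heat-kernel estimates (see, e.g., \cite{Lejay,Osaka}) furnish constants $C,c>0$, depending only on the ellipticity and boundedness constants, such that
\[
0\le G(\sigma,x,y)\le \frac{C}{\sqrt{\sigma}}\exp\!\biggl(-c\,\frac{(x-y)^2}{\sigma}\biggr),\qquad \sigma>0,\ x,y\in\mathbb{R}.
\]
This is precisely the input that makes the problem tractable: obtaining this estimate in the stated form is the real content, and I would therefore treat securing it (or citing it from the divergence-form heat-kernel literature) as the main obstacle. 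Everything downstream is elementary Gaussian integration.

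Squaring the bound gives $G(\sigma,x,y)^2\le \frac{C^2}{\sigma}\exp(-2c(x-y)^2/\sigma)$. First I would integrate in $y$: since $\int_{\mathbb{R}}\exp(-2c(x-y)^2/\sigma)\,dy=\sqrt{\pi\sigma/(2c)}$ independently of $x$, we get $\int_{\mathbb{R}}G(\sigma,x,y)^2\,dy\le C'\sigma^{-1/2}$ with $C'=C^2\sqrt{\pi/(2c)}$. The borderline singularity $\sigma^{-1/2}$ is integrable near $\sigma=0$ exactly because we are in one spatial dimension, so integrating in $\sigma$ over $[0,t]$ yields $\int_0^t C'\sigma^{-1/2}\,d\sigma=2C'\sqrt{t}\le 2C'\sqrt{T}$. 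This bound is finite and free of $x$, whence
\[
\sup_{t\in[0,T],\,x\in\mathbb{R}}{\mathbb{E}}\bigl(u(t,x)^2\bigr)\le 2C'\sqrt{T}<+\infty .
\]
The same finiteness shows the kernel lies in $\mathcal{H}_0$, so by Remark~\ref{remark1} the Wiener integral in \textup{(\ref{sol})} is well defined, the centered Gaussian process $u$ exists, and the claimed uniform bound holds, completing the proof.
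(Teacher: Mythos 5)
Your proposal is correct and follows essentially the same route as the paper: both rest on the Aronson-type Gaussian upper bound $G(\sigma,x,y)\le C\sigma^{-1/2}\exp(-c(x-y)^2/\sigma)$ (the paper cites Aronson and Stroock for it), then square, integrate out the spatial variable via the Gaussian integral to obtain the integrable singularity $\sigma^{-1/2}$, integrate in time to get a bound of order $\sqrt{T}$ uniform in $x$, and invoke Remark~\ref{remark1} together with Wiener's isometry to conclude both existence and the uniform second-moment bound. The only differences are cosmetic (an explicit change of variable and slightly different citations for the heat-kernel estimate).
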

\begin{proof}
The existence and some bounds of the fundamental solution\index{fundamental solution} to PDE
(\ref{e:1}) have been established in \cite{Aronson} and
\cite{Dan}. In particular, it has been proved that there exist positive
constants $C_{1}$ and $C_{2}$ such that
%
%e9 #&#
\begin{equation}
G(t,x, y) \le \frac{C_{1}}{\sqrt{2 \pi t}} \exp \biggl( -\frac{C_{2}
(x-y)^{2}}{t} \biggr),
\end{equation}
for any $t \in [0,T]$ and $(x,y)\in {\mathbb{R}}^{2}$. Thus,
\begin{eqnarray*}
\int_{0}^{t} \int
_{\mathbb{R}} G^{2}(t-s,x, y) dy ds &\le & \int
_{0}^{t} \int_{\mathbb{R}}
\frac{C_{1}^{2}}{2 \pi (t-s)} \exp \biggl( -\frac{2C_{2} (x-y)^{2}}{t-s} \biggr) dy ds
\\
&\le & C_{3} \int_{0}^{t}
\frac{1}{\sqrt{t-s}} ds
\\
& \le & C_{4} \sqrt{T},
\end{eqnarray*}
where $C_{3}$ and $C_{4}$ denote two strictly positive constants. This
with Remark \ref{remark1} and Wiener\index{Wiener}'s isometry\index{isometry} allow us to get the
existence of the mild solution\index{mild solution} to (\ref{e:2g}) and to show that
\begin{equation*}
{\mathbb{E}} \bigl( u(t,x)^{2} \bigr) = \int_{0}^{t}
\int_{\mathbb{R}} G ^{2}(t-s,x, y) dy ds \le
C_{4} \sqrt{T},
\end{equation*}
for every $t \in [0,T]$ and $x\in {\mathbb{R}}$.
\end{proof}

Now we consider an interval $I$  in ${\mathbb{R}}$ and denote
\begin{equation*}
\Delta _{h}G(u,x,z)= G(u,x+h,z)-G(u,x,z)
\end{equation*}
and
\begin{equation*}
\bigl\llVert \Delta _{h}G(t-.,x,.) \bigr\rrVert
^{2}_{L^{2}([0,t]\times {
\mathbb{R}})}= \int_{0}^{t}
\int_{{ \mathbb{R}}} \bigl( \Delta _{h}G(t- \sigma ,x,y)
\bigr)^{2}\,d\sigma \,dy
\end{equation*}
for every $u, t \in (0,T]$, $h > 0$ and $x, z \in I $. We also consider
the conditions:

\setlength\leftmargini{33pt}
\begin{enumerate}
\item[${H_{1}(I)}$:] $\forall t \in (0, T], \; \exists \; C_{5} > 0; \; \forall (x, y)
\in I^{2}; \; y > x,$
\begin{equation*}
C_{5} (y-x) \le \bigl\llVert \Delta _{y-x}G(t-.,x,.)
\bigr\rrVert _{L^{2}((0,t)\times {\mathbb{R}})} ^{2}.
\end{equation*}

\item[${H_{2}(I)}$:]
$\forall t \in (0, T], \; \exists \; C_{6} > 0; \; \forall (x, y)
\in I^{2}; \; y > x,$
\begin{equation*}
\bigl\llVert \Delta _{y-x}G(t-.,x,.) \bigr\rrVert
_{L^{2}((0,t)\times {\mathbb{R}})}^{2} \le C_{6} (y-x).
\end{equation*}

\item[${H_{3}(I)}$:]
$\forall t \in (0, T], \exists \; C_{7} > 0; \; \forall h > 0, \;
\forall (x, y) \in I^{2}$,
\begin{equation*}
\int_{0}^{t} \int_{\mathbb{R}}
\Delta _{h}G(t-s,x,z)\Delta _{h}G(t-s,y,z)\,ds\,dz
\leqslant C_{7} \; \, h^{2}.
\end{equation*}
\end{enumerate}

The following lemma will play an important role in this paper.
%
%l1 #&#
\begin{lemma}
\label{th:spatial-reg}
Let u be the mild solution\index{mild solution} to Equation \textup{(\ref{e:2g})}.
\begin{enumerate}%
\item
If Condition $H_{1}(I)$ is satisfied then, for every $t > 0$, there
exists a positive constant $C_{8}$ such that
%
%e10 #&#
\begin{equation}
\label{eq:Holder} \forall x, y \in I, \quad C_{8} \, \llvert y-x
\rrvert \le \mathbb{E} \bigl( u(t,y)-u(t,x) \bigr) ^{2} .
\end{equation}%
\item
If Condition $H_{2}(I)$ is satisfied then, for every $t > 0$, there
exists a positive constant $C_{9}$ such that
%
%e11 #&#
\begin{equation}
\label{eq:Holder2} \forall x, y \in I, \quad \mathbb{E} \bigl( u(t,y)-u(t,x)
\bigr) ^{2} \leqslant C_{9} \, \llvert y-x \rrvert .
\end{equation}%
\item
If Condition $H_{3}(I)$ is satisfied then, for every $t > 0$,
$\forall h > 0$,
%
%e12 #&#
\begin{equation}
\label{Maj} \forall \; x, y \in I, \quad \mathbb{E} \bigl( \bigl(
u(t,x+h)-u(t,x) \bigr) \bigl( u(t,y+h)-u(t,y) \bigr) \bigr) \leqslant \,
C_{7} h ^{2}.
\end{equation}
\end{enumerate}
\end{lemma}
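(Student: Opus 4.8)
The plan is to reduce all three statements to the Wiener isometry for the canonical Hilbert space $\mathcal{H}_0 = L^2([0,T]\times\mathbb{R})$, exploiting the fact (recorded in Remark~\ref{remark1}) that $u$ is a centered Gaussian field obtained as a Wiener integral against $W$. The essential observation is that, by linearity of the Wiener integral, the spatial increment of $u$ is itself a Wiener integral whose kernel is the spatial increment of $G$. Writing $h = y-x$ and using the notation $\Delta_h G$ introduced before the lemma,
\begin{equation*}
u(t,y) - u(t,x) = \int_0^t \int_{\mathbb{R}} \bigl( G(t-s,y,z) - G(t-s,x,z) \bigr) W(ds,dz) = \int_0^t \int_{\mathbb{R}} \Delta_{y-x} G(t-s,x,z)\, W(ds,dz).
\end{equation*}

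First I would prove parts (1) and (2) together. Applying the Wiener isometry to the representation above yields
\begin{equation*}
\mathbb{E}\bigl( u(t,y) - u(t,x) \bigr)^2 = \bigl\Vert \Delta_{y-x} G(t-\cdot,x,\cdot) \bigr\Vert^2_{L^2((0,t)\times\mathbb{R})}.
\end{equation*}
For $y > x$ the lower bound (\ref{eq:Holder}) is then immediate from $H_1(I)$ with $C_8 = C_5$, and the upper bound (\ref{eq:Holder2}) from $H_2(I)$ with $C_9 = C_6$. To reach the stated conclusions for arbitrary $x,y\in I$ with the absolute value $|y-x|$, I would invoke the symmetry of the left-hand side in $x$ and $y$: exchanging the roles of $x$ and $y$ covers the case $y < x$, while $y=x$ is trivial.

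For part (3), I would use the polarization form of the isometry, namely that the covariance of two Wiener integrals equals the $\mathcal{H}_0$-inner product of their kernels. Since $u(t,x+h)-u(t,x)$ and $u(t,y+h)-u(t,y)$ are the Wiener integrals of $\Delta_h G(t-\cdot,x,\cdot)$ and $\Delta_h G(t-\cdot,y,\cdot)$ respectively,
\begin{equation*}
\mathbb{E}\bigl( \bigl( u(t,x+h)-u(t,x) \bigr)\bigl( u(t,y+h)-u(t,y) \bigr) \bigr) = \int_0^t \int_{\mathbb{R}} \Delta_h G(t-s,x,z)\, \Delta_h G(t-s,y,z)\, ds\, dz,
\end{equation*}
and the estimate (\ref{Maj}) follows at once from $H_3(I)$.

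Since each assertion collapses to a single application of the isometry, there is no genuine analytic obstacle here. The only points requiring care are the correct identification of the increment kernel as $\Delta_{y-x} G$ via linearity of the Wiener integral, and the short symmetry argument needed to pass from the hypotheses $H_1(I)$ and $H_2(I)$, which are phrased for $y > x$, to the absolute-value formulations in (\ref{eq:Holder}) and (\ref{eq:Holder2}).
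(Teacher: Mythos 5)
Your proposal is correct and follows essentially the same route as the paper: both reduce all three assertions to Wiener's isometry (and its polarization for part (3)), identify the increment kernel as $\Delta_{y-x}G$, and handle the trivial case $x=y$ together with a symmetry argument to pass from $y>x$ to general $x,y\in I$. There is nothing to add.
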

\begin{proof}
We first note that if $x = y$, then Inequalities (\ref{eq:Holder}) and
(\ref{eq:Holder2}) are trivial. We also note that the proofs in the\vadjust{\goodbreak}
cases $x > y $ and $y > x$ are similar. So we consider only the case
$y > x$. Using Wiener\index{Wiener}'s isometry\index{isometry} we get
%
%e13 #&#
\begin{eqnarray}
\label{eq:Iso-Sol} %
&& \mathbb{E} \bigl( u(t,y)-u(t,x)
\bigr)^{2}
\nonumber
\\[-2pt]
&=& \mathbb{E} \biggl(\int_{(0,t)\times { \mathbb{R}}}G(t-u,y,z)W(du,dz)\,- \int
_{(0,t)\times { \mathbb{R}}}G(t-u,x,z) W(du,dz) \biggr)^{2}
\nonumber
\\[-2pt]
&=& \mathbb{E} \biggl(\int_{(0,t)\times { \mathbb{R}}} \bigl(
G(t-u,y,z)-G(t-u,x,z) \bigr) W(du,dz) \biggr)^{2}
\nonumber
\\[-2pt]
&=& \int_{(0,t)\times { \mathbb{R}}} \bigl( G(t-u,y,z)-G(t-u,x,z)
\bigr)^{2} du \,dz
\nonumber
\\[-2pt]
&=& \bigl\llVert \Delta _{ y-x }G(t-.,x,.) \bigr\rrVert
^{2}_{L^{2}([0,t]\times
{ \mathbb{R}})} . %
\end{eqnarray}
Equality (\ref{eq:Iso-Sol}) and Condition $H_{1}(I)$ [respectively
$H_{2}(I)$] allow us to get the two first assertions in Lemma
\ref{th:spatial-reg}.

As for the third one, using again  Wiener\index{Wiener}'s isometry\index{isometry} we get
\begin{eqnarray*}
&&\mathbb{E} \bigl( \bigl( u(t,x + h)-u(t,x) \bigr) \bigl(
u(t,y+h)-u(t,y) \bigr) \bigr)
\\[-2pt]
&=&\mathbb{E} \biggl( \int_{(0,t)\times { \mathbb{R}}}\Delta _{h}G(t-u,x,z)W(du,dz)
\\[-2pt]
&& {}\times \int_{(0,t)\times { \mathbb{R}}}\Delta _{h}G(t-u,y,z)
W(du,dz) \biggr)
\\[-2pt]
&=& \int_{(0,t)\times {
\mathbb{R}}}\Delta _{h}G(t-u,x,z)\,\Delta
_{h}G(t-u,y,z) \;du\,dz.
\end{eqnarray*}
Using Condition $H_{3}(I)$ the proof of the third assertion in Lemma
\ref{th:spatial-reg} is achieved.
\end{proof}

From Assertion $2$ in Lemma \ref{th:spatial-reg} and by Kolmogorov's
criterion of continuity, we easily get the following corollary.

%c1 #&#
\begin{cor}
\label{Hold-cont}
Let $u$ be the mild solution\index{mild solution} to \textup{(\ref{e:2g})}. If Condition $H_{2}(I)$
is satisfied, then, for every $t \in [0, T]$, the process $( u(t,x) )_{x \in I}$ is H\"{o}lder continuous of order
$\gamma $ with $0 < \gamma <\frac{1}{2}$.
\end{cor}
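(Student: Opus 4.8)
The plan is to invoke Kolmogorov's continuity criterion, whose
hypothesis is a moment bound of the form
$\mathbb{E}\lvert u(t,y)-u(t,x)\rvert^{p}\le C\,\lvert y-x\rvert^{1+\beta}$
for some $p>0$ and $\beta>0$, yielding H\"older continuity of any order
$\gamma<\beta/p$. Fix $t\in[0,T]$. The raw material is Assertion~$2$ of
Lemma~\ref{th:spatial-reg}, which under Condition $H_{2}(I)$ gives the
second-moment estimate
$\mathbb{E}(u(t,y)-u(t,x))^{2}\le C_{9}\,\lvert y-x\rvert$. A bare
exponent $1$ on the right is \emph{not} enough for Kolmogorov (one needs
a power strictly bigger than~$1$), so the first key step is to upgrade
the second moment to higher moments.

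The upgrade is immediate from Gaussianity, and this is really the heart
of the argument. By Remark~\ref{remark1} the field
$(u(t,x))_{x\in I}$ is centered Gaussian, so for each fixed $x,y$ the
increment $u(t,y)-u(t,x)$ is a centered Gaussian random variable. Hence
all its moments are controlled by its variance: for every even integer
$p=2m$ there is a universal constant $c_{p}$ with
$\mathbb{E}(u(t,y)-u(t,x))^{p}=c_{p}\bigl(\mathbb{E}(u(t,y)-u(t,x))^{2}\bigr)^{p/2}$.
Combining this with the $H_{2}(I)$ estimate gives
$\mathbb{E}(u(t,y)-u(t,x))^{p}\le c_{p}C_{9}^{p/2}\lvert y-x\rvert^{p/2}$.

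Now I would feed this into Kolmogorov's criterion with the identification
$1+\beta=p/2$, i.e. $\beta=\tfrac{p}{2}-1$, which is strictly positive as
soon as $p>2$. The criterion then delivers H\"older continuity of every
order $\gamma<\beta/p=\tfrac12-\tfrac1p$. Letting $p\to\infty$ shows that
$(u(t,x))_{x\in I}$ admits a modification that is H\"older continuous of
every order $\gamma<\tfrac12$, which is exactly the assertion of the
corollary.

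I expect no serious obstacle here: the whole point is that the corollary
is a soft consequence of two inputs that are already in hand, namely the
linear variance bound from Lemma~\ref{th:spatial-reg} and the
Gaussianity from Remark~\ref{remark1}. The only mild subtlety worth
flagging is that $\tfrac12$ is the supremum but not an attained value of
the admissible H\"older exponents---the factor $\tfrac1p$ cannot be
driven to zero for a single finite $p$---which is precisely why the
statement restricts to $0<\gamma<\tfrac12$ with a strict inequality.
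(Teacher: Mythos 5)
Your proposal is correct and follows essentially the same route as the paper, which simply states that the corollary follows ``from Assertion~$2$ in Lemma~\ref{th:spatial-reg} and by Kolmogorov's criterion of continuity'' without giving details. Your write-up correctly supplies the implicit steps---upgrading the linear variance bound to $p$-th moments via Gaussianity of the increments and then optimizing the Kolmogorov exponent $\gamma<\tfrac12-\tfrac1p$ over $p$---exactly as the paper's appeal to Kolmogorov's criterion intends.
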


%r2 #&#
\begin{remark}
From Corollary \ref{Hold-cont}, under Condition $H_{2}(I)$, the process
$u$ being a solution to (\ref{e:2g}) keeps the same H\"{o}lder regularity in
space as the solution to the standard stochastic heat equation\index{stochastic heat equation} driven
by a time-space white noise (see \cite{R} and references therein).
\end{remark}

%s3 #&#
\section{Almost sure central limit\index{almost sure central limit theorem (ASCLT)} theorem}%
\label{sec3}
Let us start this section with the following definition.

%d1 #&#
\begin{defin}
Let $(G_{N})_{N \ge 1}$ be a sequence of real-valued random variables
defined on a common probability space $(\varOmega , {\mathcal{F}},
{\mathbb{P}})$.\index{probability space} We say that the sequence $(G_{N})_{N \ge 1}$ satisfies
an almost sure central limit\index{almost sure central limit theorem (ASCLT)} theorem\index{almost sure central limit theorem (ASCLT)} (ASCLT\index{almost sure central limit theorem (ASCLT)}), if, almost surely, for
every bounded and continuous function $\varphi :\mathbb{R}\rightarrow
\mathbb{R}$, we have:
\begin{equation*}
\frac{1}{logN}\sum_{i=1}^{N}
\frac{\varphi (G_{i})}{i} {\longrightarrow } \mathbb{E} \bigl(\varphi ( \mathcal{Z})
\bigr)\quad \mbox{as}\ N\longrightarrow \infty ,
\end{equation*}
where ${\mathcal{Z}}$ is an ${\mathcal{N}}(0,1)$ random variable.
\end{defin}

For fixed $t \in (0, T]$, we consider the Gaussian process\index{Gaussian process} $( u(t,x) )_{x \in [0,1]}$ being the mild solution\index{mild solution} to Equation
(\ref{e:2g}). We also consider the partition $
0= x_{0} < x_{1} < \cdots < x_{N} = 1$ of the interval
$[0,1]$ defined by $
x_{i} = \frac{i}{N}$ for every $i = 0,1,\ldots, N $. We define the centered
re-normalized quadratic variation statistic in the following way:
%
%e14 #&#
\begin{equation}
\label{var} V_{N}= \displaystyle \sum
_{i=0}^{N-1} \biggl[ \displaystyle
\frac{   ( u(t,x_{i+1}) -
u(t,x_{i})  )^{2} }{ \mathbb{E}   ( u(t,x
_{i+1}) -
u(t,x_{i})  )^{2} } -1 \biggr] \quad \mbox{and} \quad \tilde{V}_{N}=
\frac{1}{\sqrt{2N}}\,V_{N}.
\end{equation}

The aim of this section is to show that the sequence $(\tilde{V}_{N})_{N
\ge 1}$ satisfies the ASCLT.\index{almost sure central limit theorem (ASCLT)} Let us first recall briefly some basic
elements of the Stein--Malliavin theory (see \cite{nou}) that will
be useful in our proof.

%s3.1 #&#
\subsection{Elements of the \xch{Stein--Malliavin}{Stein- Malliavin} theory}%
\label{sec3.1}
Consider a real separable Hilbert
space $(\mathcal{H}, {<}.,.{>}_{\mathcal{H}})$  and an isonormal
Gaussian process\index{isonormal Gaussian process} $ (B(\varphi ),\varphi \in \mathcal{H} )$ on a probability space $(\varOmega ,\mathcal{F},
\mathbb{P})$,\index{probability space} which is a centered Gaussian family of random variables
such that
\begin{equation*}
\mathbb{E} \bigl(B(\varphi ),B(\psi ) \bigr)= {<}\varphi ,\psi
{>}_{
\mathcal{H}},
\end{equation*}
for every $\varphi ,\psi \in \mathcal{H}$. For $q \ge 1$, let
${\mathcal{H}}^{\otimes q}$ be the $q$th tensor product of
${\mathcal{H}}$ and denote $\mathcal{H}^{\odot q}$ the associated
$q$th symmetric tensor product.

Denote by $I_{q}$ the $q$th multiple stochastic integral with respect to
$B$. This $I_{q}$ is actually an isometry\index{isometry} between the Hilbert space
$\mathcal{H}^{\odot q}$ equipped with the scaled norm $\frac{1}{\sqrt{q!}}\; \llVert   . \rrVert   _{\mathcal{H}^{\otimes q}}
$ and the Wiener chaos\index{Wiener  chaos} of order $q$, which is defined as the closed
linear span\index{linear span} of the random variables $H_{q}(B(\varphi ))$, where
$\varphi \in \mathcal{H},\;\;  \llVert   \varphi  \rrVert   _{
\mathcal{H}}=1$ and $H_{q}$ is the Hermite polynomial of degree
$q\geqslant 1$ defined by
%
%e15 #&#
\begin{equation}
H_{q}(x)= \frac{(-1)^{q}}{q!}\,\exp \biggl( \frac{x^{2}}{2}
\biggr)\, \frac{d^{q}}{dx^{q}} \biggl(\exp \biggl( -\frac{x^{2}}{2} \biggr)
\biggr), \quad x\in { \mathbb{R}}.
\end{equation}
The isometry\index{isometry} of multiple integrals can be written as follows: for $p,\,q
\geqslant 1,f\in \mathcal{H}^{\otimes p} $ and $ g\in \mathcal{H}^{
\otimes q} $,
%
%e16 #&#
\begin{equation}
\label{Isom} \mathbb{E} \bigl( I_{p}(f)I_{q}(g)
\bigr)= %
\begin{cases}
q!{<}\hat{f},\hat{g}{>}_{ \mathcal{H}^{\otimes q} }
&
\mbox{if}\,p=q
\\
0
&
\mbox{otherwise}.
\end{cases} %
\end{equation}
It holds that
\begin{equation*}
I_{q}(f)=I_{q}(\hat{f}),
\end{equation*}
where $\hat{f}$ denotes the canonical symmetrization of $f$ defined by
%
%e17 #&#
\begin{equation}
\hat{f}(x_{1},\ldots,x_{q})=\frac{1}{q!}\sum
_{\sigma \in S_{q}}f(x_{
\sigma (1)},\ldots,x_{\sigma (q)}),
\end{equation}
where the sum runs over all permutations $\sigma $ of $\{ 1,\ldots,q\}$.

We recall that any square-integrable random variable $F$, which is
measurable with respect to the $\sigma $-algebra generated by $B$, can be\vadjust{\goodbreak}
expanded into an orthogonal sum of multiple stochastic integrals:\index{stochastic integrals}
%
%e18 #&#
\begin{equation}
F= \mathbb{E}(F)+ \sum_{q=1}^{\infty }I_{q}(f_{q}),
\end{equation}
where the series converges in the $L^{2}(\varOmega )$-sense and the kernels
$f_{q}$, belonging to $\mathcal{H}^{\odot q}$, are uniquely determined
by $F$.

Consider now the class of smooth random variables $F$ that can be
written in the form
%
%e19 #&#
\begin{equation}
\label{smooth} F = g \bigl(B(\varphi _{1}), \ldots , B(\varphi
_{n}) \bigr),
\end{equation}
where $n\geqslant 1$, $g:{ \mathbb{R}}^{n}\longmapsto { \mathbb{R}}$ is
a ${\mathcal{C}}^{\infty }$-function with compact support and
$\varphi _{1}, \ldots, \varphi _{n} \in \mathcal{H}$. The Malliavin
derivative of a smooth random variable $F$ of the form (\ref{smooth})
is the $\mathcal{H}$-valued random variable given by
%
%e20 #&#
\begin{equation}
\label{DM} DF= \sum_{i=1}^{n}
\frac{\partial g}{\partial x_{i}} \bigl(B(\varphi _{1}), \ldots ,B(\varphi
_{n}) \bigr)\,\varphi _{i}.
\end{equation}

The following formula for multiplication of Wiener chaos\index{Wiener  chaos} integrals of
any orders $p,q$ will play a basic role in the next section. For any
symmetric integrands $f\in \mathcal{H}^{\odot p}$ and $g\in
\mathcal{H}^{\odot p}$, we have
%
%e21 #&#
\begin{equation}
\label{prodMI} I_{p}(f)I_{q}(g)= \sum
_{r=0}^{p\wedge q}r! \bigl(^{p}_{r}
\bigr) \bigl(^{q} _{r} \bigr)I_{p+q-2r}(f\otimes
_{r} g),
\end{equation}
where, in the particular case when $\mathcal{H}=L^{2}  ( [0,T]
  )$, for $r= 1,\ldots, p\wedge q$, the $r$th contraction $f \otimes
_{r} g$ is the element of $\mathcal{H}^{\otimes (p+q-2r)}$ defined by
%
%e22 #&#
\begin{equation}
\label{contraction} %
\begin{array}{@{}rcl}
&&
\displaystyle
(f\otimes _{r} g)(s_{1},\ldots,s_{p-r},t_{1},\ldots,t_{q-r})
\\
\noalign{\vskip2mm}
&=&
\displaystyle
\int_{[0,T]^{r}}du_{1}...du_{r} f(s_{1},\ldots,s_{p-r},u_{1},\ldots,u_{r})
\,g(t_{1},\ldots,t_{q-r},u_{1},\ldots,u_{r}).
\end{array} %
\end{equation}

The following theorem gives a description of the normal approximation
of multiple stochastic integrals.\index{stochastic integrals} We refer to
\cite{nou,NGP2012,NOL} and references therein for the proof.
%
%t1 #&#
\begin{thm}
\label{theo-inter}
Fix $q\geqslant 1$. Assume that $(G_{N})_{N\geqslant 1}:=   (\mathrm{I}
_{q}(g_{N})  )_{N\geqslant 1}$ with $g_{N} \in \mathcal{H}^{
\odot q} $ is a sequence of random variables belonging to the $q$th Wiener
chaos\index{Wiener chaos} such that
\begin{equation*}
\lim_{N \rightarrow \infty }\mathbb{E} \bigl(G^{2}_{N}
\bigr) = \sigma ^{2}.
\end{equation*}
Hence, $G_{N}$ converges in law to $\mathcal{Z}\sim \mathcal{N}(0,1)$
if and only if
\begin{equation*}
\lim_{N \rightarrow \infty } \llVert DG_{N} \rrVert
^{2}_{
\mathcal{H}} = q\sigma ^{2}.
\end{equation*}
Furthermore, if we denote by $d$ one of the metrics on the space of
probability measures on ${\mathbb{R}}$, including the Kolmogorov,\index{Kolmogorov}
Wasserstein\index{Wasserstein} and Total Variation measures, then for $N$ large enough:
\begin{equation*}
d \bigl( G_{N},\mathcal{N}(0,1) \bigr)\leqslant C \bigl( \sqrt{
\mathbf{Var} \bigl( \llVert DG_{N} \rrVert ^{2}_{\mathcal{H}}
\bigr)} \,+\, \sqrt{ \mathbb{E} \bigl( \llVert DG_{N} \rrVert
^{2}_{
\mathcal{H}} \bigr)-q\sigma ^{2} } \bigr) .
\end{equation*}
\end{thm}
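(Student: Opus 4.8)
The plan is to deduce the statement from the Nourdin--Peccati combination of Stein's method with Malliavin calculus, together with the fourth-moment characterization of Nualart--Peccati and Nualart--Ortiz-Latorre; accordingly I would first prove the quantitative bound (the ``Furthermore'' part) and then read off the equivalence from it. Throughout, the key structural fact is that a variable $F=I_q(g)$ in the $q$th Wiener chaos is an eigenfunction of the Ornstein--Uhlenbeck generator $L=-\delta D$, namely $LF=-qF$, so that $L^{-1}F=-\frac{1}{q}F$ and hence $-DL^{-1}F=\frac{1}{q}DF$. Since $G_N=I_q(g_N)$ has $\mathbb{E}(G_N)=0$, we may write $G_N=\delta(-DL^{-1}G_N)$, which is what makes the integration-by-parts available.

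For the bound I would start from Stein's equation. Given a test function $h$ in the class that defines the metric $d$, let $f_h$ be the bounded solution with bounded derivative of $f'(x)-xf(x)=h(x)-\mathbb{E}(h(\mathcal{Z}))$, so that $d(G_N,\mathcal{Z})$ equals, up to the metric-dependent constant $C$, the supremum over $h$ of $|\mathbb{E}(f_h'(G_N)-G_Nf_h(G_N))|$. The duality between $D$ and $\delta$ then gives $\mathbb{E}(G_Nf_h(G_N))=\mathbb{E}(f_h'(G_N)\langle DG_N,-DL^{-1}G_N\rangle_{\mathcal{H}})=\frac{1}{q}\mathbb{E}(f_h'(G_N)\|DG_N\|_{\mathcal{H}}^{2})$ by the eigenfunction identity, whence $\mathbb{E}(f_h'(G_N)-G_Nf_h(G_N))=\mathbb{E}(f_h'(G_N)(1-\frac{1}{q}\|DG_N\|_{\mathcal{H}}^{2}))$. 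Bounding $f_h'$ uniformly and applying Cauchy--Schwarz yields
\[
d(G_N,\mathcal{Z})\le C\,\sqrt{\mathbb{E}\Bigl[\bigl(1-\tfrac{1}{q}\|DG_N\|_{\mathcal{H}}^{2}\bigr)^{2}\Bigr]}.
\]
Next I would split the right-hand side into a variance part and a bias part: writing $1-\frac{1}{q}\|DG_N\|_{\mathcal{H}}^{2}=-\frac{1}{q}(\|DG_N\|_{\mathcal{H}}^{2}-\mathbb{E}\|DG_N\|_{\mathcal{H}}^{2})+(1-\frac{1}{q}\mathbb{E}\|DG_N\|_{\mathcal{H}}^{2})$ and using $(a+b)^{2}\le 2a^{2}+2b^{2}$ produces $\frac{1}{q^{2}}\mathbf{Var}(\|DG_N\|_{\mathcal{H}}^{2})$ plus the square of the bias. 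Here I would record the elementary identity $\mathbb{E}(\|DG_N\|_{\mathcal{H}}^{2})=q\,\mathbb{E}(G_N^{2})$, which follows from the derivative formula and the isometry \textup{(\ref{Isom})}; combined with $\mathbb{E}(G_N^{2})\to\sigma^{2}$ this links the bias term to $\mathbb{E}(\|DG_N\|_{\mathcal{H}}^{2})-q\sigma^{2}$ and reproduces the two controlling quantities displayed in the theorem.

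The equivalence then follows in two steps, the second being where the real work lies. The ``if'' direction is immediate from the bound: if $\|DG_N\|_{\mathcal{H}}^{2}\to q\sigma^{2}$ in $L^{2}(\varOmega)$ then both the variance and bias terms vanish, so $d(G_N,\mathcal{Z})\to 0$ and $G_N\to\mathcal{Z}$ in law (note that convergence in law to $\mathcal{N}(0,1)$ together with hypercontractivity in a fixed chaos forces $\sigma^{2}=1$, so the formulations are consistent). The ``only if'' direction is the genuinely hard part: one must show that normal convergence forces $\mathbf{Var}(\|DG_N\|_{\mathcal{H}}^{2})\to 0$, which is the content of the Nualart--Peccati fourth-moment theorem. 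The route is to expand $\frac{1}{q}\|DG_N\|_{\mathcal{H}}^{2}$ by the product formula \textup{(\ref{prodMI})} and compute its variance through the isometry \textup{(\ref{Isom})}, obtaining an expression governed by the contraction norms $\|g_N\otimes_r g_N\|_{\mathcal{H}^{\otimes(2q-2r)}}$ for $r=1,\dots,q-1$; the same contractions control the fourth cumulant $\mathbb{E}(G_N^{4})-3(\mathbb{E}(G_N^{2}))^{2}$, and normal convergence is equivalent to this cumulant tending to zero, hence to all contraction norms vanishing, hence to $\mathbf{Var}(\|DG_N\|_{\mathcal{H}}^{2})\to 0$. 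Establishing these chained equivalences between the contraction norms, the fourth cumulant, and the variance of $\|DG_N\|_{\mathcal{H}}^{2}$ is the main obstacle, and it is precisely here that I would invoke the arguments of \cite{nou,NGP2012,NOL} rather than redo the combinatorics.
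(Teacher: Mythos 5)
The paper gives no proof of this theorem: it is quoted as a known result, with the argument deferred entirely to the references \cite{nou,NGP2012,NOL}. Your sketch is a correct reconstruction of exactly that standard argument---the eigenfunction identity $-DL^{-1}G_N=\frac{1}{q}DG_N$ for a fixed chaos, Stein's equation plus Malliavin duality yielding $d(G_N,\mathcal{Z})\le C\sqrt{\mathbb{E}\bigl[(1-\frac{1}{q}\Vert DG_N\Vert^2_{\mathcal{H}})^2\bigr]}$, the variance/bias splitting together with $\mathbb{E}(\Vert DG_N\Vert^2_{\mathcal{H}})=q\,\mathbb{E}(G_N^2)$, and the Nualart--Ortiz-Latorre/fourth-moment equivalence for the ``only if'' direction---so it is consistent with, and strictly more detailed than, what the paper itself provides.
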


The following theorem has been introduced in \cite{Nua}. It gives
a sufficient condition for extending Theorem \ref{theo-inter} to an
ASCLT\index{almost sure central limit theorem (ASCLT)} for multiple stochastic integrals.\index{stochastic integrals}

%t2 #&#
\begin{thm}
\label{T:7}
Fix $q\geqslant 2$, and let $(G_{N})_{N\geqslant 1}$ be a sequence of random
variables defined by
\begin{equation*}
G_{N}:= \bigl(\mathrm{I}_{q}(g_{N})
\bigr)_{N\geqslant 1}; \quad g_{N} \in \mathcal{H}^{\odot q}.
\end{equation*}
Suppose that:
\begin{enumerate}%
\item
For every $
\displaystyle
N\geqslant 1,\; \mathbb{E}(G^{2}_{N}) = 1$.
\item
For every $ r= 1,\ldots,q-1$, $
\displaystyle
\lim_{N \rightarrow \infty } \| g_{N} \otimes _{r} g_{N}
\| ^{2}_{\mathcal{H}^{\otimes 2(q-r)}}= 0$.
\item
For every $ r=1,\ldots,q-1, \; $ $
\displaystyle
\sum_{N\geqslant 2}
\displaystyle
\frac{1}{Nlog^{2}N}\,
\displaystyle
\sum_{l=1}^{N}
\displaystyle
\frac{1}{l} \| g_{l} \otimes _{r} g_{l} \| ^{2}_{
\mathcal{H}^{\otimes 2(q-r)}}< \infty $.
\item
$
\displaystyle
\sum_{N\geqslant 2}
\displaystyle
\frac{1}{Nlog^{3}N}\,
\displaystyle
\sum_{i,j=1}^{N}
\displaystyle
\frac{\llvert\mathbb{E}(G_{i}G_{j})\rrvert}{ij}<\infty $.
\end{enumerate}
Then, the sequence $(G_{N})_{N \ge 1}$ satisfies an ASCLT.\index{almost sure central limit theorem (ASCLT)}
\end{thm}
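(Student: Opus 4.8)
The plan is to derive the ASCLT from the Ibragimov--Lifshits criterion, which reduces almost sure weak convergence of the empirical measures $\frac{1}{\log N}\sum_{i\le N}\frac1i\delta_{G_i}$ to a summable second moment bound on the associated random characteristic functions. Concretely, I would set
\[
\Delta_{N}(t)=\frac{1}{\log N}\sum_{k=1}^{N}\frac{1}{k}\bigl(e^{itG_{k}}-e^{-t^{2}/2}\bigr),
\]
and recall that it suffices to show, for every $r>0$, that $\sum_{N\ge 2}\frac{1}{N\log N}\sup_{|t|\le r}\mathbb{E}|\Delta_{N}(t)|^{2}<\infty$. Expanding the modulus squared gives
\[
\mathbb{E}\bigl|\Delta_{N}(t)\bigr|^{2}=\frac{1}{\log^{2}N}\sum_{k,l=1}^{N}\frac{1}{kl}\,\mathbb{E}\Bigl[\bigl(e^{itG_{k}}-e^{-t^{2}/2}\bigr)\bigl(e^{-itG_{l}}-e^{-t^{2}/2}\bigr)\Bigr],
\]
so that multiplying by $\frac{1}{N\log N}$ produces the outer weight $\frac{1}{N\log^{3}N}$, which is exactly the weight in hypotheses $3$ and $4$. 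Everything then hinges on a good bound for the individual summands.

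The core of the proof is the master estimate: there is $C_{r}>0$, depending only on $q$ and $r$, such that for all $|t|\le r$ and all $k,l$,
\[
\Bigl|\mathbb{E}\bigl[(e^{itG_{k}}-e^{-t^{2}/2})(e^{-itG_{l}}-e^{-t^{2}/2})\bigr]\Bigr|\le C_{r}\Bigl(\bigl|\mathbb{E}(G_{k}G_{l})\bigr|+\sum_{s=1}^{q-1}\|g_{k}\otimes_{s}g_{k}\|^{2}+\sum_{s=1}^{q-1}\|g_{l}\otimes_{s}g_{l}\|^{2}\Bigr).
\]
To get it I would split the summand into the product of single deviations $(\mathbb{E}e^{itG_{k}}-e^{-t^{2}/2})(\mathbb{E}e^{-itG_{l}}-e^{-t^{2}/2})$ plus the genuine covariance $\mathrm{Cov}(e^{itG_{k}},e^{-itG_{l}})$. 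Each single deviation is controlled, via the quantitative fourth--moment estimate underlying Theorem \ref{theo-inter}, by $\sqrt{\mathbf{Var}(q^{-1}\|DG_{k}\|_{\mathcal{H}}^{2})}$; and by the multiplication formula \textup{(\ref{prodMI})} this variance is a combinatorial linear combination of the self--contraction norms $\|g_{k}\otimes_{s}g_{k}\|^{2}$, $1\le s\le q-1$. The product of the two deviations is then dominated by the sum of squares of these norms through the arithmetic--geometric inequality, yielding the squared contraction terms above.

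The covariance is where the Stein--Malliavin machinery enters. Using that $G_{k},G_{l}$ lie in the $q$th chaos, so that the Ornstein--Uhlenbeck generator $L$ satisfies $-DL^{-1}G_{l}=\frac1q DG_{l}$, I would differentiate $s\mapsto\mathbb{E}[(e^{itG_{k}}-\mathbb{E}e^{itG_{k}})e^{-isG_{l}}]$ and integrate by parts on the Wiener space; this produces a first--order linear ODE in $s$ whose forcing term involves $\langle DG_{k},DG_{l}\rangle_{\mathcal{H}}$ and $\|DG_{l}\|_{\mathcal{H}}^{2}$. Replacing these by their means $q\,\mathbb{E}(G_{k}G_{l})$ and $q$ and solving bounds the covariance by $C_{r}(|\mathbb{E}(G_{k}G_{l})|+\sqrt{\mathbf{Var}(q^{-1}\langle DG_{k},DG_{l}\rangle)}+\sqrt{\mathbf{Var}(q^{-1}\|DG_{l}\|^{2})})$; the residual fluctuation terms are again expressed, via \textup{(\ref{contraction})}, through contraction norms of $g_{k}$ and $g_{l}$ and absorbed into the right--hand side. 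Once the master estimate holds, summing against $\frac{1}{N\log^{3}N}\cdot\frac{1}{kl}$ and using $\sum_{k\le N}\frac1k\sim\log N$ turns the contraction part into precisely the series of hypothesis $3$ and the $|\mathbb{E}(G_{k}G_{l})|$ part into that of hypothesis $4$; hypothesis $1$ fixes the normalisation needed for the $\mathcal{N}(0,1)$ reference, and hypothesis $2$ with Theorem \ref{theo-inter} guarantees the common marginal limit is $\mathcal{N}(0,1)$.

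I expect the covariance step to be the main obstacle. Carrying the integration by parts with full uniformity over $|t|\le r$, and especially bounding the variances of the Malliavin inner products so that they reduce to the correlation $\mathbb{E}(G_{k}G_{l})$ and to \emph{squared} self--contraction norms rather than to first powers, is delicate: it requires the contraction inequalities $\|g_{k}\otimes_{s}g_{l}\|^{2}\le\|g_{k}\otimes_{s}g_{k}\|\,\|g_{l}\otimes_{s}g_{l}\|$ together with careful tracking of the exact powers, since only then do the resulting three series match hypotheses $3$ and $4$ term by term.
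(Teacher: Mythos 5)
First, a point of context: the paper contains no proof of Theorem \ref{T:7} at all. It is quoted (with one discrepancy noted below) from the reference \cite{Nua} of Bercu, Nourdin and Taqqu, so the only meaningful comparison is with the proof given there. Your overall architecture --- the Ibragimov--Lifshits criterion for $\Delta_N(t)=\frac{1}{\log N}\sum_{k\le N}\frac1k(e^{itG_k}-e^{-t^2/2})$, the expansion of $\mathbb{E}|\Delta_N(t)|^2$ into a double sum producing the weight $\frac{1}{N\log^3N}$, and Stein--Malliavin bounds on the individual summands --- is exactly the strategy of that cited proof, so the skeleton is right.

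The genuine gap is your ``master estimate'', which is false as stated, and the failure occurs precisely at the point you flag as delicate. The product of the two marginal deviations does yield \emph{squared} self-contraction norms via the arithmetic--geometric inequality, as you say. But the covariance term does not: your own sketch bounds it by square roots of $\mathbf{Var}\bigl(q^{-1}\langle DG_k,DG_l\rangle_{\mathcal H}\bigr)$ and $\mathbf{Var}\bigl(q^{-1}\Vert DG_l\Vert^2_{\mathcal H}\bigr)$, and since those variances are controlled by squares of contraction norms, their square roots are \emph{first} powers; no application of AM--GM or of the inequality $\Vert g_k\otimes_s g_l\Vert^2\le\Vert g_k\otimes_s g_k\Vert\,\Vert g_l\otimes_s g_l\Vert$ converts a first power into a square. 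Indeed no bound of the form you claim can hold: take $q=2$ and expand $\mathrm{Cov}(e^{itG_k},e^{-itG_l})$ in powers of $t$; its $t^3$-coefficient is $4i\bigl[\langle g_k\otimes_1 g_k,g_l\rangle-\langle g_l\otimes_1 g_l,g_k\rangle\bigr]$, coming from $\mathbb{E}(G_k^2G_l)=8\langle g_k\otimes_1 g_k,g_l\rangle$. Choosing $g_k$ with $\Vert g_k\otimes_1 g_k\Vert=\varepsilon$ small and $g_l$ proportional to $g_k\otimes_1 g_k$ (one can arrange $\langle g_k,g_l\rangle=0$, hence $\mathbb{E}(G_kG_l)=0$, and $\Vert g_l\otimes_1 g_l\Vert=O(\varepsilon)$), the covariance is of order $t^3\varepsilon$ while your right-hand side is of order $\varepsilon^2$. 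Consequently your route can only prove the theorem with hypothesis 3 carrying \emph{first} powers $\Vert g_l\otimes_r g_l\Vert$ --- which is exactly how the theorem is stated in \cite{Nua}. The squared norms appearing in the statement reproduced in this paper are a (strictly weaker, since by hypothesis 2 these norms are eventually $\le 1$) misquotation of \cite{Nua}; it is harmless for the application in Section \ref{sec3.3}, where $\Vert g_l\otimes_1 g_l\Vert^2\le C/l$ makes both versions of the series converge, but it is fatal for your attempt to prove the literal statement: the estimate you would need is simply not true.
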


We finish this section with the following useful reduction lemma. For
its proof see Lemma $2.2$ in \cite{Azmoodeh-Nourdin}.

%l2 #&#
\begin{lemma}
\label{l9}
Consider a real-valued sequence $(a_{n})_{n \ge 1}$  converging to
$a_{\infty }\neq 0$. Consider also a sequence of
real valued random variables $(G_{n})_{n \ge 1}$. Then the sequence $(G_{n})_{n \ge 1}$
satisfies an ASCLT\index{almost sure central limit theorem (ASCLT)} if, and only if, $(a_{n}G_{n})_{n \ge 1}$ does.
\end{lemma}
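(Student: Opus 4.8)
The plan is to argue directly from the definition of the ASCLT, working with the logarithmically weighted empirical measures
\[
\mu _{N}:=\frac{1}{\log N}\sum_{n=1}^{N}\frac{1}{n}\,\delta _{a_{n}G_{n}},
\]
together with their analogues built from $G_{n}$ and from $a_{\infty }G_{n}$. Since $1/a_{n}\to 1/a_{\infty }\neq 0$, the two implications are symmetric (exchange the roles of $(G_{n})$ and $(a_{n}G_{n})$), so I would only prove that an ASCLT for $(a_{n}G_{n})$ forces one for $(G_{n})$. The argument factors into two independent reductions: first replacing the varying coefficient $a_{n}$ by its limit $a_{\infty }$, and then passing from $a_{\infty }G_{n}$ to $G_{n}$ by a change of test function.

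The second reduction is immediate. For a bounded continuous $\psi $, setting $\varphi (y):=\psi (y/a_{\infty })$ produces another bounded continuous function with $\psi (G_{n})=\varphi (a_{\infty }G_{n})$, and as $\psi $ ranges over all bounded continuous functions so does $\varphi $. Hence convergence of $\frac{1}{\log N}\sum _{n\le N}\frac{1}{n}\varphi (a_{\infty }G_{n})$ for every bounded continuous $\varphi $ is exactly the defining property of an ASCLT for $(G_{n})$, the limiting law being the image of the limiting law of $(a_{\infty }G_{n})$ under the scaling $y\mapsto y/a_{\infty }$ (a centered Gaussian, with $\mathcal{N}(0,1)$ recovered precisely when $a_{\infty }=\pm 1$, as in the intended application where $a_{n}$ is the exact normalization constant tending to $1$).

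The heart of the proof is the first reduction, namely the almost sure estimate
\[
\frac{1}{\log N}\sum_{n=1}^{N}\frac{1}{n}\bigl\llvert \varphi (a_{n}G_{n})-\varphi (a_{\infty }G_{n})\bigr\rrvert \longrightarrow 0,\qquad N\to \infty ,
\]
for every bounded continuous $\varphi $. I would fix $M:=\sup \llvert \varphi \rrvert $ and $C:=\sup _{n}\llvert a_{n}\rrvert \vee \llvert a_{\infty }\rrvert <\infty $, and for a threshold $K>0$ split each summand according to $\{\llvert G_{n}\rrvert \le K\}$ and $\{\llvert G_{n}\rrvert >K\}$. On the first event both arguments lie in $[-CK,CK]$, so the summand is bounded by the modulus of continuity of $\varphi $ on that interval evaluated at $\llvert a_{n}-a_{\infty }\rrvert K$; since $a_{n}\to a_{\infty }$ this is a null sequence, and its logarithmic average tends to $0$ (the weights $\frac{1/n}{\sum _{m\le N}1/m}$ form a regular summation method). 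On the second event the summand is simply bounded by $2M$.

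The main obstacle is that a bounded continuous $\varphi $ need not be uniformly continuous, so the difference cannot be made small uniformly in $n$ and one must control the logarithmic mass carried by large values of $\llvert G_{n}\rrvert $. I would extract this control from the assumed ASCLT itself: it forces $\mu _{N}\Rightarrow \nu $ along almost every path for a probability law $\nu $, whence the family $(\mu _{N})_{N}$ is tight almost surely. Using $\llvert a_{n}\rrvert \ge \llvert a_{\infty }\rrvert /2$ for $n\ge n_{0}$, one has $\{\llvert G_{n}\rrvert >K\}\subseteq \{\llvert a_{n}G_{n}\rrvert >\llvert a_{\infty }\rrvert K/2\}$ eventually, so up to the $o(1)$ contribution of the finitely many indices $n<n_{0}$ the tail term is dominated by $2M\,\mu _{N}(\{\llvert y\rrvert >\llvert a_{\infty }\rrvert K/2\})$, whose $\limsup $ is at most $2M\,\nu (\{\llvert y\rrvert \ge \llvert a_{\infty }\rrvert K/2\})$ by the Portmanteau theorem. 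Letting $K\to \infty $ makes this bound vanish and yields the displayed convergence. Combining the two reductions, the logarithmic averages of $\varphi (a_{n}G_{n})$ and of the rescaled $\varphi (G_{n})$ share the same almost sure limit, which is the claimed equivalence.
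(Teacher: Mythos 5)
Your proof is correct, and the first thing to note is that the paper does not actually prove Lemma \ref{l9}: it refers the reader to Lemma 2.2 of \cite{Azmoodeh-Nourdin}. So the comparison here is between your self-contained argument and an appeal to the literature. Your route — (i) replace $a_n$ by $a_\infty$ by splitting each summand on $\{\vert G_n\vert\le K\}$, where uniform continuity of $\varphi$ on the compact set $[-CK,CK]$ plus regularity of logarithmic (Toeplitz) averaging kills the bulk term, and on $\{\vert G_n\vert> K\}$, where the tail mass is dominated, via $\vert a_n\vert\ge\vert a_\infty\vert/2$ for large $n$, by $\mu_N(\{\vert y\vert\ge\vert a_\infty\vert K/2\})$ and controlled by the Portmanteau theorem applied to the assumed almost sure weak convergence $\mu_N\Rightarrow\nu$; then (ii) pass from $a_\infty G_n$ to $G_n$ by rescaling the test function — is sound, and the quantifiers are handled correctly: the exceptional null set is the single event on which $\mu_N\Rightarrow\nu$ fails, hence independent of $\varphi$ and of $K$, and the limits are taken in the right order ($N\to\infty$ first, then $K\to\infty$). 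The symmetry reduction via $1/a_n\to 1/a_\infty\neq 0$ is also legitimate, with one sentence missing: $1/a_n$ is undefined if some $a_n$ vanish, which can happen only for finitely many $n$, and finitely many indices contribute $O(1/\log N)$ to the averages, so they are harmless.

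Your parenthetical remark about the limiting law deserves emphasis, because it identifies a genuine imprecision in the statement as reproduced in this paper. With the paper's definition of the ASCLT, which hard-codes the limit $\mathcal{N}(0,1)$, the equivalence can hold literally only when $\vert a_\infty\vert=1$: as your argument shows, the empirical measures of $(G_n)$ converge to the image of $\nu$ under $y\mapsto y/a_\infty$, i.e. to $\mathcal{N}(0,1/a_\infty^2)$ when $\nu=\mathcal{N}(0,1)$. In \cite{Azmoodeh-Nourdin} the ASCLT is defined relative to the distributional limit of the sequence itself, under which the lemma is correct as stated; and in the present paper's application one has $a_N=\sigma_N\to 1$, so nothing is affected. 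In short: your proof is correct, and it supplies (and slightly sharpens) an argument the paper only cites.
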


%s3.2 #&#
\subsection{Limiting behavior of the re-normalized quadratic
variation of the spatial solution process}%
\label{sec3.2}

For fixed $t \in (0, T]$, we denote by $\mathcal{H}$ the canonical
Hilbert space associated to the Gaussian process\index{Gaussian process} $  ( u(t,x)   )_{x \in [0,1]}$ being a mild solution\index{mild solution} to Equation
(\ref{e:2g}). This Hilbert space is defined as the closure of the linear
span\index{linear span} generated by the indicator functions $
\mathbf{1}_{[0,x]},\;x>0$,\index{indicator functions} with respect to the inner product
%
%e23 #&#
\begin{equation}
\mathbb{E} \bigl( u(t,x)u(t,y) \bigr) = {<}\mathbf{1}_{[0,x]},
\mathbf{1} _{[0,y]}{>}_{\mathcal{H}} .
\end{equation}
We also denote by $I_{q}$, $ q \ge 1$, the multiple stochastic integral
with respect to the Gaussian process\index{Gaussian process} $
( u(t,x) )_{x \in [0,1]}$. So for every $x < y$ we have
\begin{equation*}
u(t,y) - u(t,x) = I_{1} ( \mathbf{1}_{[x,y]} ) .
\end{equation*}

We start our study of the limit behavior in distribution of the sequence
$(\tilde{V}_{N})_{N \ge 1}$ by the following main theorem.

%t3 #&#
\begin{thm}
\label{V}
Let $u$ be the mild solution\index{mild solution} to Equation \textup{(\ref{e:2g})}, $G$ be the
fundamental solution\index{fundamental solution} associated to the operator ${\mathcal{L}}$ and
$\tilde{V}_{N}$ be given by \textup{(\ref{var})}. If $G$ satisfies
Conditions $H_{1}([0,1])$ and $H_{3}([0,1])$, then
\begin{equation*}
\lim_{N \rightarrow \infty } \mathbb{E} \bigl( \tilde{V}_{N}^{2}
\bigr) = 1.
\end{equation*}
\end{thm}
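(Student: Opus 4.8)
The plan is to express $\tilde V_N$ as an element of the second Wiener chaos and then compute its variance directly via the isometry formula. First I would observe that each recentered increment can be written using a second-order Hermite polynomial: since $u(t,x_{i+1})-u(t,x_i)=I_1(\mathbf 1_{[x_i,x_{i+1}]})$ is centered Gaussian, setting $e_i:=\mathbf 1_{[x_i,x_{i+1}]}/\|\mathbf 1_{[x_i,x_{i+1}]}\|_{\mathcal H}$ we have $\|e_i\|_{\mathcal H}=1$ and the product formula \eqref{prodMI} gives $I_1(e_i)^2=I_2(e_i\otimes e_i)+1$, so that
\begin{equation*}
\frac{(u(t,x_{i+1})-u(t,x_i))^2}{\mathbb E(u(t,x_{i+1})-u(t,x_i))^2}-1=I_2(e_i\otimes e_i).
\end{equation*}
Summing over $i$ and dividing by $\sqrt{2N}$ expresses $\tilde V_N=I_2(f_N)$ as a single double integral with kernel $f_N=\frac{1}{\sqrt{2N}}\sum_{i=0}^{N-1}e_i\otimes e_i$.

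Next, by the isometry \eqref{Isom} for $p=q=2$, I would compute
\begin{equation*}
\mathbb E\bigl(\tilde V_N^2\bigr)=2\,\langle \hat f_N,\hat f_N\rangle_{\mathcal H^{\otimes 2}}=\frac{1}{N}\sum_{i,j=0}^{N-1}\langle e_i,e_j\rangle_{\mathcal H}^2,
\end{equation*}
using that each $e_i\otimes e_i$ is already symmetric and that $\langle e_i\otimes e_i,e_j\otimes e_j\rangle_{\mathcal H^{\otimes 2}}=\langle e_i,e_j\rangle_{\mathcal H}^2$. The diagonal terms $i=j$ contribute exactly $\langle e_i,e_i\rangle_{\mathcal H}^2=1$ each, giving a total of $\frac{1}{N}\cdot N=1$. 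So the claim reduces to showing that the off-diagonal contribution $\frac{1}{N}\sum_{i\neq j}\langle e_i,e_j\rangle_{\mathcal H}^2$ tends to $0$ as $N\to\infty$.

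To control the off-diagonal sum I would return to the definition of the inner product. Writing out $\langle e_i,e_j\rangle_{\mathcal H}$ in terms of the unnormalized increments and using \eqref{eq:Iso-Sol}, the numerator is exactly the cross-covariance $\mathbb E\bigl((u(t,x_{i+1})-u(t,x_i))(u(t,x_{j+1})-u(t,x_j))\bigr)$, which for increments of equal length $h=1/N$ is precisely the quantity bounded in assertion $3$ of Lemma \ref{th:spatial-reg}. Hypothesis $H_3([0,1])$ therefore gives $|\text{numerator}|\le C_7 h^2=C_7/N^2$. For the denominators I would use Condition $H_1([0,1])$ via assertion $1$ of Lemma \ref{th:spatial-reg}, which yields $\mathbb E(u(t,x_{i+1})-u(t,x_i))^2\ge C_8 h=C_8/N$, so each $\|e_i\|_{\mathcal H}^2\ge C_8/N$ bounds the normalization from below. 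Combining, $\langle e_i,e_j\rangle_{\mathcal H}^2\le (C_7/N^2)^2/(C_8/N)^2=(C_7^2/C_8^2)N^{-2}$ for $i\neq j$, whence $\frac{1}{N}\sum_{i\neq j}\langle e_i,e_j\rangle_{\mathcal H}^2\le \frac{1}{N}\cdot N^2\cdot (C_7^2/C_8^2)N^{-2}=O(N^{-1})\to 0$.

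The main obstacle is the bookkeeping in the off-diagonal estimate: I must make sure the $H_3$ bound really applies to all pairs of \emph{equal-length} increments starting at distinct points $x_i$ and $x_j$ (which it does, since $H_3$ is stated uniformly in $x,y\in I$ and in $h>0$), and I must verify that the lower bound from $H_1$ combines with the upper bound from $H_3$ to beat the $N^2$ terms in the double sum. Once those two lemma applications are correctly matched up, the $O(N^{-1})$ decay is immediate and the diagonal gives the exact limit $1$; no delicate cancellation is needed, only the quadratic-versus-linear scaling in $N$ contained in the two hypotheses.
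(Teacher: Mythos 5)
Your proposal is correct and follows essentially the same route as the paper's proof: a second Wiener chaos representation of $\tilde V_N$ via the product formula, the isometry to reduce $\mathbb{E}(\tilde V_N^2)$ to a double sum of squared inner products, the diagonal terms giving exactly $1$, and the off-diagonal terms killed by combining the lower bound from $H_1([0,1])$ with the upper bound from $H_3([0,1])$ through Lemma \ref{th:spatial-reg}. The only difference is cosmetic: you normalize the kernels $e_i$ up front, while the paper carries the variances explicitly as denominators in $T_{1,N}$ and $T_{2,N}$.
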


\begin{proof}
By using Formula (\ref{prodMI}), we can write
\begin{eqnarray*}
V_{N} &=& \sum_{j=0}^{N-1}
\biggl[ \frac{  (u(t,x_{j+1} )-u(t,{x_{j}} )  )^{2} }{\mathbb{E}  (u(t,x
_{j+1} )-u(t,x_{j} )  )^{2}}-1 \biggr]
\\
&=& \sum_{j=0}^{N-1} \biggl[
\frac{{ \mathrm{I}_{1}}^{2}  (\mathbf{1}_{[x_{j},x_{j+1}]}  )}{
\mathbb{E}  (u(t,x_{j+1} )-u(t,x_{j} )  )^{2}}-1 \biggr]
\\
&=& \sum_{j=0}^{N-1}
\frac{{\mathrm{I}_{2}}  ( \mathbf{1}_{[x_{j},x
_{j+1}]}^{\otimes ^{2}}   ) }{\mathbb{E}  (u(t,x_{j+1} )-u(t,x
_{j} )  )^{2}} .
\end{eqnarray*}

By virtue of the isometry formula\index{isometry} (\ref{Isom}), we get
\begin{eqnarray*}
\mathbb{E} \bigl(V_{N}^{2} \bigr)&=&
\mathbb{E} \Biggl( \sum_{j=0}^{N-1}
\frac{{\mathrm{I}_{2}}  ( \mathbf{1}_{[x_{j},x
_{j+1}]}^{\otimes ^{2}}   ) }{\mathbb{E}  (u(t,x_{j+1} )-u(t,x
_{j} )  )^{2}
} \Biggr)^{2}
\\
&=& \sum_{j,k=0}^{N-1}
\frac{\mathbb{E}   (
{\mathrm{I}_{2}}  ( \mathbf{1}_{[x_{j},x
_{j+1}]}^{\otimes ^{2}}   )\, {\mathrm{I}_{2}}
  ( \mathbf{1}
_{[x_{k},x_{k+1}]}^{\otimes ^{2}}   )   ) }{\mathbb{E}  (u(t,x
_{j+1} )-u(t,x_{j} )  )^{2}\;\mathbb{E}  (u(t,x_{k+1} )-u(t,x_{k}
)  )^{2}}
\\
&=& 2 \sum_{j,k=0}^{N-1}
\frac{ {{<} \mathbf{1}_{[x_{j},x_{j+1}]}, \mathbf{1}_{[x_{k},x_{k+1}]} {>}
_{\mathcal{H}}^{2}} }{\mathbb{E}  (u(t,x_{j+1} )-u(t,x_{j} )  )^{2}
\;\mathbb{E}  (u(t,x_{k+1} )-u(t,x_{k} )  )^{2}}.
\end{eqnarray*}
Thus,
\begin{equation*}
\mathbb{E} \bigl({V_{N}}^{2} \bigr)=
T_{1,N}+T_{2,N} ,
\end{equation*}
where
%
%e24 #&#
\begin{equation}
\label{eq:t1n} T_{1,N}= 2\sum_{j=0}^{N-1}
\frac{ {{<} \mathbf{1}_{[x_{j},x_{j+1}]},
\mathbf{1}_{[x_{j},x_{j+1}]} {>} _{\mathcal{H}}^{2}} }{  [\mathbb{E}
  (u(t,x_{j+1} )-u(t,x_{j} )  )^{2}  ]^{2}}
\end{equation}
and
%
%e25 #&#
\begin{equation}
\label{eq:t2n} T_{2,N}= 2\sum_{j,k=0; j\neq k}^{N-1}
\frac{ {{<} \mathbf{1}_{[x_{j},x
_{j+1}]}, \mathbf{1}_{[x_{k},x_{k+1}]} {>} _{\mathcal{H}}^{2}} }{
\mathbb{E}  (u(t,x_{j+1} )-u(t,x_{j} )  )^{2}\;\mathbb{E}  (u(t,x
_{k+1} )-u(t,x_{k} )  )^{2}}.
\end{equation}
On the one hand we clearly have $T_{1,N}=2N$. On the other hand, since
Conditions $H_{1}([0,1])$ and $H_{3}([0,1])$ are satisfied, by
virtue of Lemma \ref{th:spatial-reg} we get
%
%e26 #&#
\begin{equation}
\label{T2} T_{2,N} \leqslant 2\,N^{2}\sum
_{j,k=0; j\neq k}^{N-1} { {{<} \mathbf{1} _{[x_{j},x_{j+1}]},
\mathbf{1}_{[x_{k},x_{k+1}]}{>} _{\mathcal{H}}^{2}} } \leqslant C
\,N^{2}\,\sum_{j,k=0; j\neq k}^{N-1} {
\biggl(\frac{1}{N
^{2}} \biggr)^{2}} \le C,
\end{equation}
where $C$ denotes a universal positive constant. \xch{Thus, we}{Thus,we} deduce that the
dominant term for $\mathbb{E}({\tilde{V}^{2}_{N}}) $ is obviously
$T_{1,N}$. Consequently, we obtain, for a fixed $t\in (0,T]$,
\begin{equation*}
\mathbb{E} \bigl({\tilde{V}^{2}_{N}} \bigr)=
\frac{1}{2N}\;\mathbb{E} \bigl({V^{2}_{N}} \bigr)
\longrightarrow 1 \quad \mbox{as} \ {N\longrightarrow\xch{ \infty } { . \infty
}}.\qedhere
\end{equation*}
\end{proof}

In the following theorem we establish the convergence in law of the
sequence $(\tilde{V}_{N})_{N}$.

%t4 #&#
\begin{thm}
\label{T:principal}
Consider the sequence of random variables $\tilde{V}_{N}$ defined in
\textup{(\ref{var})}. If $G$ satisfies Conditions $H_{1}([0,1])$ and
$H_{3}([0,1])$, then
\begin{equation*}
\tilde{V}_{N} \; \overset{{Law} } {\longrightarrow } \;
\mathcal{N}(0,1).
\end{equation*}
Moreover, if we denote by $d$ one of the metrics on the space of
probability measures on ${\mathbb{R}}$, including the Kolmogorov,\index{Kolmogorov}
Wasserstein\index{Wasserstein} and Total Variation measures, then for $N$ large enough
\begin{equation*}
d \bigl( {\tilde{V}}_{N},\mathcal{N}(0,1) \bigr)\leqslant
\frac{C}{
\sqrt{N}} .
\end{equation*}
\end{thm}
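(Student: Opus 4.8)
The plan is to observe that $\tilde V_N$ belongs to the second Wiener chaos and then to apply the quantitative fourth-moment criterion of Theorem~\ref{theo-inter} with $q=2$ and $\sigma^2=1$. Indeed, from the computation in the proof of Theorem~\ref{V} we have $\tilde V_N = I_2(\tilde f_N)$, where
\[
\tilde f_N = \frac{1}{\sqrt{2N}}\sum_{j=0}^{N-1}\frac{\mathbf{1}_{[x_j,x_{j+1}]}^{\otimes 2}}{\mathbb{E}(u(t,x_{j+1})-u(t,x_j))^2}
\]
is a symmetric kernel in $\mathcal{H}^{\odot 2}$. Theorem~\ref{V} already gives $\mathbb{E}(\tilde V_N^2)\to 1$; moreover, tracking the estimate $T_{2,N}\le C$ from that proof yields the quantitative refinement $\mathbb{E}(\tilde V_N^2)-1=\frac{1}{2N}T_{2,N}\le C/N$ (this difference being nonnegative since $T_{2,N}\ge 0$). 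Because $\mathbb{E}(\|D\tilde V_N\|_{\mathcal H}^2)=2\,\mathbb{E}(\tilde V_N^2)$ for an element of the second chaos, the second summand in the bound of Theorem~\ref{theo-inter} is controlled by $\sqrt{\mathbb{E}(\|D\tilde V_N\|_{\mathcal H}^2)-2}=\sqrt{2(\mathbb{E}(\tilde V_N^2)-1)}\le C/\sqrt N$.

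It then remains to bound the first summand $\sqrt{\mathbf{Var}(\|D\tilde V_N\|_{\mathcal H}^2)}$. For a random variable in the second chaos this variance is bounded by a universal constant times the contraction norm $\|\tilde f_N\otimes_1\tilde f_N\|_{\mathcal H^{\otimes 2}}^2$, so the whole problem reduces to estimating this single quantity. Writing $e_j=\mathbb{E}(u(t,x_{j+1})-u(t,x_j))^2$ and $\rho_{jk}=\langle \mathbf 1_{[x_j,x_{j+1}]},\mathbf 1_{[x_k,x_{k+1}]}\rangle_{\mathcal H}$, expanding the contraction gives
\[
\|\tilde f_N\otimes_1\tilde f_N\|_{\mathcal H^{\otimes 2}}^2=\frac{1}{4N^2}\sum_{j,k,l,m=0}^{N-1}\frac{\rho_{jk}\,\rho_{lm}\,\rho_{jl}\,\rho_{km}}{e_j\,e_k\,e_l\,e_m},
\]
a fourfold sum of products of four inner products (equivalently $\frac{1}{4N^2}\,\mathrm{Tr}(\beta^4)$ for the Gram-type matrix $\beta_{jk}=\rho_{jk}/\sqrt{e_je_k}$).

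The main work, and the principal obstacle, is to show that this expression is of order $1/N$. I would split the sum according to how many of the indices $j,k,l,m$ coincide. Condition $H_1([0,1])$ gives the lower bound $e_j\ge C_5/N$, hence $1/e_j\le C N$; Condition $H_3([0,1])$, via Assertion~3 of Lemma~\ref{th:spatial-reg} applied with $h=1/N$, gives $\rho_{jk}\le C/N^2$ whenever $j\neq k$, while on the diagonal $\rho_{jj}=e_j$, so the normalised factors $\rho_{jj}/e_j$ equal $1$ and the off-diagonal factors $\rho_{jk}/\sqrt{e_je_k}$ are of size $O(1/N)$. Counting powers of $N$ shows that the dominant contribution is the fully diagonal block $j=k=l=m$, producing the term $N/(4N^2)=1/(4N)$, whereas every configuration carrying an off-diagonal factor is of strictly smaller order (each additional off-diagonal factor gains a power $1/N$ that outweighs the extra summation index), so the remaining contributions are $O(1/N^2)$. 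Consequently $\|\tilde f_N\otimes_1\tilde f_N\|_{\mathcal H^{\otimes 2}}^2\le C/N$ and $\sqrt{\mathbf{Var}(\|D\tilde V_N\|_{\mathcal H}^2)}\le C/\sqrt N$. Combining the two estimates in Theorem~\ref{theo-inter} yields $d(\tilde V_N,\mathcal N(0,1))\le C/\sqrt N$ for the Kolmogorov, Wasserstein and total variation distances, which in particular forces $\tilde V_N\to\mathcal N(0,1)$ in law and establishes both assertions.
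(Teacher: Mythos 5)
Your proposal is correct and follows essentially the same route as the paper: both apply Theorem~\ref{theo-inter} with $q=2$, $\sigma^{2}=1$, control the bias term via $\mathbb{E}(\llVert D\tilde{V}_{N}\rrVert^{2}_{\mathcal{H}})-2 = T_{2,N}/N \le C/N$ from Theorem~\ref{V}, and reduce the variance term to the same fourfold sum of normalised inner products, bounded by splitting according to coinciding indices and using $H_{1}([0,1])$ (lower bound $e_{j}\ge C/N$) and $H_{3}([0,1])$ (off-diagonal bound $\rho_{jk}\le C/N^{2}$ via Lemma~\ref{th:spatial-reg}). The only cosmetic difference is that you pass through the contraction norm $\llVert \tilde{f}_{N}\otimes_{1}\tilde{f}_{N}\rrVert^{2}_{\mathcal{H}^{\otimes 2}}$, whereas the paper expands $\llVert D\tilde{V}_{N}\rrVert^{2}_{\mathcal{H}}$ directly with the product formula; these yield the same sum and the same counting argument.
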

\begin{proof}
By virtue of Formula (\ref{DM}) we get
\begin{equation*}
D\tilde{V}_{N}=\frac{1}{\sqrt{2N}\,}\sum
_{j=0}^{N-1}\frac{{\mathrm{I}
_{1}}
  ( \mathbf{1}_{[x_{j},x_{j+1}]}   )\;\mathbf{1}_{[x
_{j},x_{j+1}]} }{\mathbb{E}  (u(t,x_{j+1} )-u(t,x_{j} )  )^{2}}.
\end{equation*}
Hence, for every fixed $t \in [0,T]$, using Formula (\ref{prodMI}), we
get
\begin{eqnarray*}
 \llVert D\tilde{V}_{N} \rrVert ^{2}_{\mathcal{H}}
&=& \frac{2}{N}\sum_{j,k=0}^{N-1}
\frac{\mathrm{I}_{2}
  ( \mathbf{1}
_{[x_{j},x_{j+1}]} \otimes \mathbf{1}_{[x_{k},x_{k+1}]}   ) \, {<}
\mathbf{1}_{[x_{j},x_{j+1}]},\mathbf{1}_{[x_{k},x_{k+1}]}{>}_{
\mathcal{H}} }{\mathbb{E}  (u(t,x_{j+1} )-u(t,x_{j} )  )^{2}\;
\mathbb{E}  (u(t,x_{k+1} )-u(t,x_{k} )  )^{2}
}
\\
&& {} +\mathbb{E} \bigl( \llVert D\tilde{V}_{N} \rrVert
^{2}_{\mathcal{H}} \bigr),
\end{eqnarray*}
and consequently,
\begin{eqnarray*}
&& {\mathbf{Var}} \bigl( \llVert D\tilde{V}_{N} \rrVert
^{2}_{\mathcal{H}} \bigr)
\\
&=& {\mathbb{E}} \bigl[ \llVert D\tilde{V}_{N} \rrVert
^{2}_{\mathcal{H}} - {\mathbb{E}} \bigl( \llVert D
\tilde{V}_{N} \rrVert ^{2}_{\mathcal{H}} \bigr)
\bigr]^{2}
\\
&=& {\mathbb{E}} \Biggl[ \frac{2}{N}\sum_{j,k=0}^{N-1}
\frac{\mathrm{I}_{2}
  ( \mathbf{1}_{[x_{j},x_{j+1}]}
\otimes \mathbf{1}_{[x_{k},x_{k+1}]}   ) \, {<} \mathbf{1}_{[x_{j},x
_{j+1}]},\mathbf{1}_{[x_{k},x_{k+1}]}{>}_{\mathcal{H}} }{\mathbb{E}
  (u(t,x_{j+1} )-u(t,x_{j} )  )^{2}\; \mathbb{E}  (u(t,x_{k+1} )-u(t,x
_{k} )  )^{2}
} \Biggr]^{2}
\\
&=& \frac{8}{N^{2}}\sum_{j,k,m,l=0}^{N-1}
\frac{\mathbb{E}  (\mathrm{I}
_{2}  ( \mathbf{1}_{[x_{j},x_{j+1}]} \otimes \mathbf{1}_{[x_{k},x
_{k+1}]}   )\,\mathrm{I}_{2}  ( \mathbf{1}_{[x_{m},x_{m+1}]}
\otimes \mathbf{1}_{[x_{l},x_{l+1}]}   )   ) }{\mathbb{E}
  (u(t,x_{j+1} )-u(t,x_{j} )  )^{2}\; \mathbb{E}  (u(t,x_{k+1} )-u(t,x
_{k} )  )^{2}}
\\
&&{} \times \frac{{<}\mathbf{1}_{[x_{j},x_{j+1}]},\mathbf{1}_{[x_{k},x_{k+1}]}{>}_{
\mathcal{H}} \, {<} \mathbf{1}_{[x_{m},x_{m+1}]},\mathbf{1}_{[x_{l},x
_{l+1}]} {>}_{\mathcal{H}} }{\mathbb{E}  (u(t,x_{m+1} )-u(t,x_{m} )
  )^{2}\; \mathbb{E}  (u(t,x_{l+1} )-u(t,x_{l} )  )^{2}}
\\
&=& \frac{8}{N^{2}}\sum_{j,k,m,l=0}^{N-1}
\frac{{<}\mathbf{1}_{[x_{j},x_{j+1}]}
\tilde{\otimes }\mathbf{1}_{[x_{k},x_{k+1}]} , \mathbf{1}_{[x_{m},x
_{m+1}]} \tilde{\otimes }\mathbf{1}_{[x_{l},x_{l+1}]} {>}_{\mathcal{H}
^{\otimes ^{2} }}}{  \llVert  \mathbf{1}_{[x_{j},x_{j+1}]}
 \rrVert  _{\mathcal{H}}^{2}\;  \llVert  \mathbf{1}_{[x_{k},x_{k+1}]}
 \rrVert  _{\mathcal{H}}^{2}}
\\
&&{} \times \frac{
{<} \mathbf{1}_{[x_{j},x_{j+1}]},\mathbf{1}_{[x_{k},x_{k+1}]}{>}_{
\mathcal{H}} \, {<} \mathbf{1}_{[x_{m},x_{m+1}]},\mathbf{1}_{[x_{l},x
_{l+1}]} {>}_{\mathcal{H}} }{ \llVert  \mathbf{1}_{[x_{m},x_{m+1}]}
 \rrVert  _{\mathcal{H}}^{2} \;  \llVert  \mathbf{1}_{[x_{l},x_{l+1}]}
 \rrVert  _{\mathcal{H}}^{2}},
\end{eqnarray*}
where $f\tilde{\otimes } g$ denotes the symmetrization of the tensor
product $f\otimes g$ that satisfies
\begin{equation*}
f\tilde{\otimes } g= \frac{1}{2}(f\otimes g+g\otimes f)
\end{equation*}
and
\begin{equation*}
{<}f\tilde{\otimes } g, f'\tilde{\otimes } g'
{>}_{\mathcal{H}}= \frac{1}{2} \bigl( {<}f,f'{>}_{\mathcal{H}}{<}g,g'{>}_{\mathcal{H}}+
{<}f,g'{>}_{
\mathcal{H}}{<}g,f'{>}_{\mathcal{H}}
\bigr) .
\end{equation*}
Therefore,
\begin{eqnarray*}
&& {\mathbf{Var}} \bigl( \llVert D\tilde{V}_{N} \rrVert
^{2}_{\mathcal{H}} \bigr)
\\
&=& \frac{8}{N^{2}} \sum_{j,k,m,l=0}^{N-1}
\frac{{<}\mathbf{1}_{[x_{j},x_{j+1}]} ,
\mathbf{1}_{[x_{k},x_{k+1}]} {>}_{\mathcal{H}} \,{<} \mathbf{1}_{[x_{m},x
_{m+1}]} ,\mathbf{1}_{[x_{l},x_{l+1}]} {>}_{\mathcal{H}}}{  \llVert
\mathbf{1}_{[x_{j},x_{j+1}]}  \rrVert  _{\mathcal{H}}^{2}\;  \llVert
\mathbf{1}_{[x_{k},x_{k+1}]}  \rrVert  _{\mathcal{H}}^{2}
}
\\
&&{} \times \frac{{<} \mathbf{1}_{[x_{j},x_{j+1}]},\mathbf{1}_{[x_{k},x_{k+1}]}{>}_{
\mathcal{H}} \, {<} \mathbf{1}_{[x_{m},x_{m+1}]},\mathbf{1}_{[x_{l},x
_{l+1}]} {>}_{\mathcal{H}} }{ \llVert  \mathbf{1}_{[x_{m},x_{m+1}]}
 \rrVert  _{\mathcal{H}}^{2} \;  \llVert  \mathbf{1}_{[x_{l},x_{l+1}]}
 \rrVert  _{\mathcal{H}}^{2}}
\\
&= &D_{4,N}+ D_{3,N}+ D_{2,N}+
D_{1,N}
\end{eqnarray*}
where $D_{i,N}$, for every $i\in \{1,2,3,4\}$, contains all the terms
with $i$ equal indices. So, $D_{4,N}$ contains all the summands above
with $j= k = m = l$; that is
\begin{equation*}
D_{4,N} = \frac{8}{N^{2}} \sum_{j= 0}^{N-1}
1 = \frac{8}{N}.
\end{equation*}
As for $D_{3,N}$, it contains all the terms corresponding to
$j= k = l \neq m$; so, since $G$ satisfies Conditions $H_{1}([0,1])$ and
$H_{3}([0,1])$, using Lemma \ref{th:spatial-reg} we get
\begin{eqnarray*}
D_{3,N} &\le & \frac{8}{N^{2}} \sum
_{l, m=0}^{N-1} \frac{  \llVert  \mathbf{1}_{[x_{l},x_{l+1}]}
 \rrVert  _{\mathcal{H}}^{4}
{<}\mathbf{1}_{[x_{l},x_{l+1}]} ,\mathbf{1}
_{[x_{m},x_{m+1}]} {>}_{\mathcal{H}}^{2}}{  \llVert  \mathbf{1}_{[x_{l},x
_{l+1}]}  \rrVert  _{\mathcal{H}}^{6}\;  \llVert  \mathbf{1}_{[x_{m},x
_{m+1}]}  \rrVert  _{\mathcal{H}}^{2}}
\\
&\le & \frac{C}{N^{2}} \sum_{l, m=0}^{N-1}
\frac{(\frac{1}{N^{2}})^{2}}{(
\frac{1}{N})^{2}} = \frac{C}{N^{2}}.
\end{eqnarray*}
By the same way, and using again Lemma \ref{th:spatial-reg}, we show
that
\begin{equation*}
D_{2,N}\leqslant \frac{C}{N^{2}} \quad \mbox{and} \quad
D_{1,N} \leqslant \frac{C}{N^{2}}.
\end{equation*}
All this allow us to get
\begin{equation*}
{\mathbf{Var}} \bigl( \llVert D \tilde{V}_{N} \rrVert
^{2}_{\mathcal{H}} \bigr) \leqslant \frac{C}{N}.
\end{equation*}
Moreover, we have
\begin{equation*}
\mathbb{E} \bigl( \llVert D\tilde{V}_{N} \rrVert
^{2}_{\mathcal{H}} \bigr)=2 \mathbb{E}(\tilde{V}_{N})^{2}=
\frac{\mathbb{E}(V_{N}^{2})}{N}= \frac{1}{N}(T_{1,N}+T_{2,N})=
2+ \frac{T_{2,N}}{N}
\end{equation*}
where $T_{1,N}$ and $T_{2,N}$ are defined by (\ref{eq:t1n}) and
(\ref{eq:t2n}). This and Inequality (\ref{T2}) allow us to deduce that
\begin{equation*}
\mathbb{E} \bigl( \llVert D\tilde{V}_{N} \rrVert
^{2}_{\mathcal{H}} \bigr)-2\leqslant \displaystyle
\frac{C}{N}.
\end{equation*}
By virtue of Theorem \ref{theo-inter}, the proof of Theorem
\ref{T:principal} is completed.
\end{proof}

%s3.3 #&#
\subsection{Almost sure central limit\index{almost sure central limit theorem (ASCLT)} theorem}%
\label{sec3.3}
The following theorem is %somehow an
a kind of extension of Theorem
\ref{T:principal}.\vadjust{\goodbreak}
%
%t5 #&#
\begin{thm}
\label{ASCLM}
If $G$ satisfies Conditions $H_{1}([0,1])$ and $H_{3}([0,1])$, then the
sequence $(\tilde{V}_{N})_{N \ge 1}$ satisfies an ASCLT.\index{almost sure central limit theorem (ASCLT)}
\end{thm}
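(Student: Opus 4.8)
The plan is to recognise $\tilde V_N$ as a sequence living in the second Wiener chaos and then to invoke the abstract almost sure central limit theorem, Theorem \ref{T:7}, after an exact renormalisation. From the computation in the proof of Theorem \ref{V}, the product formula (\ref{prodMI}) gives $\tilde V_N = \mathrm{I}_2(g_N)$, where
\[
g_N=\frac{1}{\sqrt{2N}}\sum_{j=0}^{N-1}\frac{\mathbf{1}_{I_j^{(N)}}^{\otimes 2}}{\sigma_j^{(N)}},\qquad I_j^{(N)}=[\,j/N,(j+1)/N\,],\quad \sigma_j^{(N)}=\mathbb{E}\bigl(u(t,(j+1)/N)-u(t,j/N)\bigr)^2,
\]
so that $q=2$. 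Since Theorem \ref{T:7} requires $\mathbb{E}(G_N^2)=1$ exactly while Theorem \ref{V} only yields $\mathbb{E}(\tilde V_N^2)\to 1$, I first set $a_N=\sqrt{\mathbb{E}(\tilde V_N^2)}$ and $G_N=a_N^{-1}\tilde V_N=\mathrm{I}_2(a_N^{-1}g_N)$, so $\mathbb{E}(G_N^2)=1$ for every $N$. As $a_N\to 1\neq 0$ by Theorem \ref{V}, Lemma \ref{l9} ensures that $(\tilde V_N)$ satisfies an ASCLT if and only if $(G_N)$ does, and it therefore suffices to verify the four hypotheses of Theorem \ref{T:7} for $(G_N)$ with $q=2$, in which case only the single contraction order $r=1$ occurs.

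Condition $(1)$ holds by construction. Since $g_N$ is symmetric, the contraction $g_N\otimes_1 g_N$ is symmetric as well, and the standard second-chaos identity expresses $\|g_N\otimes_1 g_N\|^2_{\mathcal{H}^{\otimes 2}}$ as a positive constant multiple of $\mathbf{Var}(\|D\tilde V_N\|^2_{\mathcal{H}})$. Hence the bound $\mathbf{Var}(\|D\tilde V_N\|^2_{\mathcal{H}})\le C/N$ already obtained inside the proof of Theorem \ref{T:principal} gives $\|g_N\otimes_1 g_N\|^2\le C/N$, and because $a_N^{-1}\to 1$ the same estimate holds for $\|a_N^{-1}g_N\otimes_1 a_N^{-1}g_N\|^2=a_N^{-4}\|g_N\otimes_1 g_N\|^2$. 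Condition $(2)$ with $r=1$ is then immediate, the right-hand side tending to $0$. For condition $(3)$ I insert this estimate and use $\sum_{l\ge 1}l^{-2}<\infty$ together with the convergence of $\sum_{N\ge 2}(N\log^2 N)^{-1}$:
\[
\sum_{N\ge 2}\frac{1}{N\log^2 N}\sum_{l=1}^N\frac{1}{l}\,\|a_l^{-1}g_l\otimes_1 a_l^{-1}g_l\|^2_{\mathcal{H}^{\otimes 2}}\le C\Bigl(\sum_{l\ge 1}\frac{1}{l^2}\Bigr)\sum_{N\ge 2}\frac{1}{N\log^2 N}<\infty .
\]

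The decisive point is condition $(4)$, the summability of $\sum_{N\ge 2}(N\log^3 N)^{-1}\sum_{i,j=1}^N |\mathbb{E}(G_iG_j)|/(ij)$. As the $a_i$ are bounded, it is enough to control the covariances $\mathbb{E}(\tilde V_i\tilde V_j)$; by the isometry (\ref{Isom}),
\[
\mathbb{E}(\tilde V_i\tilde V_j)=2\langle g_i,g_j\rangle_{\mathcal{H}^{\otimes 2}}=\frac{1}{\sqrt{ij}}\sum_{p=0}^{i-1}\sum_{q=0}^{j-1}\frac{\langle \mathbf{1}_{I_p^{(i)}},\mathbf{1}_{I_q^{(j)}}\rangle^2_{\mathcal{H}}}{\sigma_p^{(i)}\sigma_q^{(j)}}.
\]
Taking $i\le j$, I lower-bound the denominators through $\sigma_p^{(i)}\ge C_8/i$ and $\sigma_q^{(j)}\ge C_8/j$ (assertion $1$ of Lemma \ref{th:spatial-reg}, i.e.\ $H_1([0,1])$), which reduces the matter to proving $\sum_{p,q}\langle \mathbf{1}_{I_p^{(i)}},\mathbf{1}_{I_q^{(j)}}\rangle^2_{\mathcal{H}}\le C/j$. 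This is the genuinely technical estimate: because the two meshes $1/i$ and $1/j$ differ, Condition $H_3([0,1])$ cannot be applied to $\langle \mathbf{1}_{I_p^{(i)}},\mathbf{1}_{I_q^{(j)}}\rangle_{\mathcal{H}}$ directly. I would establish it by a localisation argument — each fine cell $I_q^{(j)}$ meets only $O(1)$ coarse cells $I_p^{(i)}$ in a non-negligible way — using $H_3([0,1])$ to bound the overlapping contributions and the Cauchy--Schwarz inequality in $\mathcal{H}$ to keep each inner product of size $C/j$, which yields the covariance decay $|\mathbb{E}(\tilde V_i\tilde V_j)|\le C\sqrt{\min(i,j)/\max(i,j)}$.

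With this decay condition $(4)$ follows by a direct computation: for $i\le j$ one gets $|\mathbb{E}(G_iG_j)|/(ij)\le C\,i^{-1/2}j^{-3/2}$, and since $\sum_{j\ge i}j^{-3/2}\le C' i^{-1/2}$ one obtains $\sum_{i,j=1}^N |\mathbb{E}(G_iG_j)|/(ij)\le C''\sum_{i=1}^N i^{-1}=O(\log N)$, whence
\[
\sum_{N\ge 2}\frac{1}{N\log^3 N}\sum_{i,j=1}^N\frac{|\mathbb{E}(G_iG_j)|}{ij}\le C\sum_{N\ge 2}\frac{1}{N\log^2 N}<\infty .
\]
All four hypotheses of Theorem \ref{T:7} being verified, $(G_N)$ satisfies an ASCLT, and Lemma \ref{l9} transfers the property to $(\tilde V_N)$. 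I expect the double-mesh bound $\sum_{p,q}\langle \mathbf{1}_{I_p^{(i)}},\mathbf{1}_{I_q^{(j)}}\rangle^2_{\mathcal{H}}\le C/j$ to be the main obstacle, as it is precisely where $H_3([0,1])$, tailored to increments of equal length, must be supplemented by a covering argument bridging two incompatible partitions.
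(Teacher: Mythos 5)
Your proposal follows the paper's proof almost step for step: the same renormalisation $G_N=\tilde V_N/\sqrt{\mathbb{E}(\tilde V_N^2)}$, the same reduction via Lemma \ref{l9}, and the same verification of hypotheses (1)--(3) of Theorem \ref{T:7} with $q=2$. Your shortcut for hypothesis (2) is legitimate: since $g_N$ lives in the second chaos, $g_N\otimes_1 g_N$ is automatically symmetric and $\Vert g_N\otimes_1 g_N\Vert^2_{\mathcal{H}^{\otimes 2}}$ is a fixed multiple of $\mathbf{Var}\bigl(\Vert D\tilde V_N\Vert^2_{\mathcal{H}}\bigr)$, so the bound $C/N$ from the proof of Theorem \ref{T:principal} can indeed be recycled; the paper instead re-expands the contraction, but that is the same computation. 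Your target covariance decay $\vert\mathbb{E}(G_iG_j)\vert\le C\sqrt{\min(i,j)/\max(i,j)}$ and the ensuing $O(\log N)$ estimate of the double sum are exactly the paper's treatment of hypothesis (4).

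The step you leave open is indeed the crux, and you are right that Condition $H_3$ (equivalently, assertion 3 of Lemma \ref{th:spatial-reg}) does not apply to increments of unequal length; in fairness, the paper never proves this step either, asserting $\langle g_i,g_j\rangle_{\mathcal{H}^{\otimes 2}}\le C\sqrt{j/i}$ directly ``using Lemma \ref{th:spatial-reg}''. But your sketched repair fails quantitatively. Writing $I_p^{(i)}=[p/i,(p+1)/i]$ and taking $i\le j$, plain Cauchy--Schwarz on the $O(j)$ overlapping pairs gives only $\langle\mathbf{1}_{I_p^{(i)}},\mathbf{1}_{I_q^{(j)}}\rangle_{\mathcal{H}}\le \Vert\mathbf{1}_{I_p^{(i)}}\Vert_{\mathcal{H}}\Vert\mathbf{1}_{I_q^{(j)}}\Vert_{\mathcal{H}}\le C/\sqrt{ij}$, so their contribution to the sum of squares is of order $j\cdot(ij)^{-1}=1/i$ (the \emph{coarse} mesh), not $1/j$; this yields only $\vert\mathbb{E}(G_iG_j)\vert\le C\sqrt{j/i}$, with which the inner double sum in hypothesis (4) is merely $O(\sqrt{N})$ and the series $\sum_N \sqrt{N}/(N\log^3 N)$ diverges. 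A repair that works is the following. For an overlapping pair, split $\mathbf{1}_{I_p^{(i)}}=\mathbf{1}_{I_p^{(i)}\cap I_q^{(j)}}+\mathbf{1}_{I_p^{(i)}\setminus I_q^{(j)}}$: the overlap term is bounded by Cauchy--Schwarz together with an order-$h$ bound on increment variances (Condition $H_2$), giving $\le\sqrt{C\lambda(I_p^{(i)}\cap I_q^{(j)})}\sqrt{C/j}\le C/j$; the remaining term --- and every non-overlapping pair --- is bounded by chopping both intervals along the common grid of mesh $1/(ij)$ (both families of endpoints lie on it) and applying $H_3$ to the resulting \emph{disjoint, equal-length} cell pairs, giving $\le ij\cdot C_7/(ij)^2=C_7/(ij)$. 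Then
\begin{equation*}
\sum_{p,q}\langle\mathbf{1}_{I_p^{(i)}},\mathbf{1}_{I_q^{(j)}}\rangle_{\mathcal{H}}^2
\;\le\; 2j\,\bigl(C/j\bigr)^2+ij\,\bigl(C/(ij)\bigr)^2\;\le\; C'/j ,
\end{equation*}
which is the bound you need. Note that this argument genuinely uses $H_2$, which Theorem \ref{ASCLM} does not list among its hypotheses (reading $H_3$ literally at $x=y$ would supply it, but is incompatible with $H_1$); since $H_2([0,1])$ is verified in Section \ref{sec4}, Theorem \ref{Teo.part} is unaffected, but both your proof and the paper's should invoke it at this point.
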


\begin{proof}
Denoting $\sigma _{N} =\sqrt{\mathbb{E}( \tilde{V}_{N}^{2}) }$, for every
$N \ge 1$, according to Theorem \ref{V}, we have $\lim_{N \rightarrow \infty } \sigma _{N} = 1$. So without loss of
generality, we assume that $\inf_{N \ge 1} \sigma _{N} = \sigma _{0} > 0$ and we consider
$G_{N} = \frac{\tilde{V}_{N}}{\sigma _{N}}$, for every $N \ge 1$.

According to Lemma \ref{l9}, to obtain Theorem \ref{ASCLM} it suffices
to show that the sequence $(G_{N})_{N \ge 1}$ satisfies an ASCLT.\index{almost sure central limit theorem (ASCLT)} To
this end, since for every $N \ge 1$ we have $G_{N}= \mathrm{I}_{2}(g_{N})$ with
\begin{equation*}
g_{N}:=\frac{1}{ \sigma _{N}\sqrt{2N}}\sum_{j=0}^{N-1}
\frac{
\mathbf{1}_{[x_{j},x_{j+1}]}^{\otimes ^{2}} }{\mathbb{E}  (u(t,x_{j+1}
)-u(t,x_{j} )  )^{2}},
\end{equation*}
and since we obviously have ${\mathbb{E}}(G_{N}^{2}) = 1$, for every $N \ge 1$, it suffices to check
the three last assumptions in Theorem \ref{T:7}.

By the $1$st contraction defined by (\ref{contraction}), we obtain
\begin{eqnarray*}
g_{l}\otimes _{1} g_{l}&=&
\frac{1}{ 2\sigma _{l}^{2} l} \sum_{j,k=0}^{l-1}
\displaystyle \frac{\mathbf{1}_{[x_{j},x_{j+1}]}^{\otimes ^{2}} \otimes _{1}
\mathbf{1}_{[x_{k},x_{k+1}]}^{\otimes ^{2}} }{\mathbb{E}  (u(t,x_{j+1}
)-u(t,x_{j} )  )^{2}\,\mathbb{E}  (u(t,x_{k+1} )-u(t,x_{k} )
  )^{2} }
\\
&= & \frac{1}{ 2 \sigma _{l}^{2} l} \sum_{j,k=0}^{l-1}
\frac{ {<}\mathbf{1}_{[x_{j},x_{j+1}]} , \mathbf{1}_{[x_{k},x_{k+1}]} {>}_{
\mathcal{H}} \mathbf{1}_{[x_{j},x_{j+1}]} \otimes \mathbf{1}_{[x_{k},x
_{k+1}]} }{\mathbb{E}  (u(t,x_{j+1} )-u(t,x_{j} )  )^{2}\,
\mathbb{E}  (u(t,x_{k+1} )-u(t,x_{k} )  )^{2} }.
\end{eqnarray*}
Therefore,
\begin{eqnarray*}
&& \llVert g_{l}\otimes _{1}
g_{l} \rrVert ^{2}_{\mathcal{H}^{\otimes
^{2}}}
\\
&=& \frac{1}{ 4\sigma _{l}^{4}l^{2}}\sum_{j,k,m,p=0}^{l-1}
\frac{ {<}
\mathbf{1}_{[x_{j},x_{j+1}]},\mathbf{1}_{[x_{k},x_{k+1}]} {>}_{
\mathcal{H}} \, {<}\mathbf{1}_{[x_{m},x_{m+1}]},\mathbf{1}_{[x_{p},x
_{p+1}]} {>}_{\mathcal{H}}}{\mathbb{E}  (u(t,x_{j+1} )-u(t,x_{j} )
  )^{2}\,\mathbb{E}  (u(t,x_{k+1} )-u(t,x_{k} )  )^{2}}
\\
& & {}\times \frac{ {<}\mathbf{1}_{[x_{j},x_{j+1}]} \tilde{\otimes }
\mathbf{1}_{[x_{k},x_{k+1}]},
\mathbf{1}_{[x_{m},x_{m+1}]}
\tilde{ \otimes }\mathbf{1}_{[x_{p},x_{p+1}]} {>}_{\mathcal{H}} }{
\mathbb{E}  (u(t,x_{m+1} )-u(t,x_{m} )  )^{2}\,\mathbb{E}  (u(t,x
_{p+1} )-u(t,x_{p} )  )^{2} }
\\
&=& \frac{1}{ 4\sigma _{l}^{2} l^{2}} \sum_{j,k,m,p=0}^{l-1}
\frac{ {<}
\mathbf{1}_{[x_{j},x_{j+1}]} ,
\mathbf{1}_{[x_{k},x_{k+1}]} {>}_{
\mathcal{H}}\, {<}\mathbf{1}_{[x_{m},x_{m+1}]} ,
\mathbf{1}_{[x_{p},x
_{p+1}]} {>}_{\mathcal{H}} }{\mathbb{E}  (u(t,x_{j+1} )-u(t,x_{j} )
  )^{2}\,\mathbb{E}  (u(t,x_{k+1} )-u(t,x_{k} )  )^{2} }
\\
&& {} \times \frac{ {<}\mathbf{1}_{[x_{j},x_{j+1}]},\mathbf{1}_{[x_{m},x_{m+1}]}
{>}_{\mathcal{H}}
\, {<}\mathbf{1}_{[x_{k},x_{k+1}]} ,\mathbf{1}_{[x_{p},x
_{p+1}]} {>}_{\mathcal{H}} }{\mathbb{E}  (u(t,x_{m+1} )-u(t,x_{m} )
  )^{2}\,\mathbb{E}  (u(t,x_{p+1} )-u(t,x_{p} )  )^{2} } .
\end{eqnarray*}
Since $\frac{1}{\sigma _{l}^{4}} \le \frac{1}{\sigma _{0}^{4}} $ for every
$l \ge 1$, and since $G$ satisfies Conditions $H_{1}([0,1])$ and
$H_{3}([0,1])$, proceeding in the same way as in the proof of Theorem
\ref{T:principal}, we get
%
%e27 #&#
\begin{equation}
\label{e:27} \llVert g_{l}\otimes _{1}
g_{l} \rrVert ^{2}_{\mathcal{H}^{\otimes
^{2}}}\leqslant
\frac{C}{l},
\end{equation}
and consequently the second assumption in Theorem \ref{T:7} is satisfied.\newpage

From Inequality (\ref{e:27}) we also deduce that
\begin{eqnarray*}
\sum_{N\geqslant 2}\frac{1}{Nlog^{2}N}\,\sum
_{l=1}^{N}\frac{1}{l} \llVert
g_{l} \otimes _{1} g_{l} \rrVert
^{2}_{\mathcal{H}^{\otimes
2}} &\leqslant & C \,\sum
_{N\geqslant 2}\frac{1}{Nlog^{2}N}\,\sum
_{l=1}^{\infty }\frac{1}{l
^{2}}
\\
&\leqslant & C \, \sum_{N\geqslant 2}\frac{1}{Nlog^{2}N}
< \infty ,
\end{eqnarray*}
that means that the third assumption in Theorem \ref{T:7} is also
satisfied.

Let us now check the last assumption in Theorem \ref{T:7}. Since we have
\begin{equation*}
\mathbb{E}(G_{i} G_{j} )= 2 {<}g_{i},g_{j}
{>}_{\mathcal{H}^{\otimes ^{2}}}
\end{equation*}
and since Conditions $H_{1}([0,1])$ and $H_{3}([0,1])$ are satisfied,
using Lemma \ref{th:spatial-reg} we get:
\begin{equation*}
\mbox{If}\ i=j, \quad {<}g_{i},g_{i}{>}_{\mathcal{H}^{\otimes ^{2}}}
\leqslant \displaystyle \frac{C}{i} \quad \mbox{and} \quad \mbox{if}\
i>j, \quad {<}g_{i},g_{j} {>}_{\mathcal{H}^{\otimes ^{2}}}
\leqslant C\, \sqrt{ \displaystyle \frac{j}{i}}.
\end{equation*}
Therefore,
\begin{eqnarray*}
&& \sum_{N\geqslant 2}\frac{1}{Nlog^{3}N}\,
\sum_{i,j=1}^{N} \frac{ \llvert  \mathbb{E}(G_{i}G_{j}) \rrvert  }{ij}
\\
&=& \sum_{N\geqslant 2}\frac{1}{Nlog^{3}N}\, \Biggl[
\sum_{i\neq j=1}^{N} \frac{ \llvert  \mathbb{E}(G_{i}G_{j}) \rrvert  }{ij}+
\sum_{i=1}^{N} \frac{ \llvert  \mathbb{E}(G_{i}^{2}) \rrvert  }{i^{2}}
\Biggr]
\\
&\leqslant & 2\sum_{N\geqslant 2}\frac{1}{Nlog^{3}N}\,
\Biggl[ 2 \sum_{i > j=1}^{N}
\frac{  \llvert  {<}g_{i},g_{j} {>}_{\mathcal{H}^{\otimes
^{2}}} \rrvert  }{ij}+\sum_{i=1}^{N}
\frac{ \llvert  {<}g_{i},g_{i} {>}_{
\mathcal{H}^{\otimes ^{2}}}  \rrvert  }{i^{2}} \Biggr]
\\
&\leqslant & C\, \sum_{N\geqslant 2} \frac{2}{Nlog^{3}N}
\, \Biggl[ \sum_{i > j=1}^{N}
\frac{ 2}{i\sqrt{ij}}+ \sum_{i=1}^{N}
\frac{1 }{i^{3}} \Biggr]
\\
&<& \infty .\hspace*{280pt}\qedhere
\end{eqnarray*}
\end{proof}

%s4 #&#
\section{Stochastic heat equation\index{stochastic heat equation} with piecewise constant coefficients}%
\label{sec4}
The study done in the previous section allows us to make a new step in
the investigation of the solution to the SPDE\index{stochastic partial differential equation (SPDE)} (\ref{e:2p}). Equation
(\ref{e:2p}) is obviously a particular case of (\ref{e:2g}). Indeed, the
operator ${\mathcal{L}}_{p}$ defined by (\ref{e:1.2}) can be written in
the form (\ref{e:1.1}) with
\begin{equation*}
r(x) = 2 \rho (x) \; \mathrm{and} \; R(x):= \rho (x) A(x).
\end{equation*}

In the following proposition we present the expression of the
fundamental solution\index{fundamental solution} associated to the operator ${\mathcal{L}}_{p}$. For
a proof see, e.g., \cite{CZ,Zi,Zil} and
\cite{MZ0}.

%p2 #&#
\begin{proposition}
\label{pro:sol-fund}
There exists a unique fundamental solution $G(t-s, x, y)$ associated to
the operator ${\mathcal{L}}_{p}$. It can be explicitly expressed as
%
%e28 #&#
\begin{equation}
\label{e:5} G(u,x,z)=m(u)\; \biggl[\frac{1}{\sqrt{a_{1}}}
\,A^{-}(u,x,z)1_{\{z
\leqslant 0\}}+\frac{1}{\sqrt{a_{2}}}
\,A^{+}(u,x,z)1_{\{z>0\}} \biggr]
\end{equation}
with
%
%e29 #&#
\begin{equation}
m(u)= \frac{1}{\sqrt{2\pi u}}\,1_{\{u>0\}},
\end{equation}
%
%e30 #&#
\begin{equation}
\label{eq:A} \lleft \{ %
\begin{array}{@{}l}
\displaystyle
A^{-}(u,x,z)=E^{-}(u,x,z)-\beta E^{+}(u,x,z),
\\
\noalign{\vskip2mm}
\displaystyle
A^{+}(u,x,z)=E^{-}(u,x,z)+\beta E^{+}(u,x,z),
\end{array} %
\rright .
\end{equation}
%
%e31 #&#
\begin{equation}
\label{E} \lleft \{ %
\begin{array}{@{}l}
\displaystyle
E^{-}(u,x,z)=\exp    \lleft(-\frac{(f(z)-f(x))^{2}}{2u}   \rright),
\\
\noalign{\vskip2mm}
\displaystyle
E^{+}(u,x,z)=\exp   \lleft (-\frac{( \llvert f(z) \rrvert + \llvert f(x) \rrvert )^{2}}{2u}  \rright ),
\end{array} %
\rright .
\end{equation}
%
%e32 #&#
\begin{equation}
\label{eq:f} f(z)= \frac{z}{\sqrt{a_{1}}}{\mathbf{1}}_{\{z\leqslant 0\}}+
\frac{z}{\sqrt{a
_{2}}} {\mathbf{1}}_{\{z>0\}} \quad \mathit{and}\quad
\beta = \displaystyle \frac{\rho _{2} \sqrt{a_{2}}-
\rho _{1}\sqrt{a_{1}}}{\rho _{2} \sqrt{a
_{2}}+
\rho _{1}\sqrt{a_{1}}} .
\end{equation}
\end{proposition}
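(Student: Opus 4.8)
The plan is to construct the kernel $G$ explicitly and then invoke the general uniqueness theory for uniformly elliptic divergence-form operators. First I would exploit the fact that $A$ and $\rho$ are constant on each of the half-lines $\{x<0\}$ and $\{x>0\}$: there the operator ${\mathcal{L}}_{p}$ of (\ref{e:1.2}) reduces to $\frac{a_{1}}{2}\partial_{xx}$ and $\frac{a_{2}}{2}\partial_{xx}$ respectively, so the spatial change of variable $y=f(x)$ from (\ref{eq:f}), which satisfies $f'(x)=1/\sqrt{a_{i}}$ and $f''(x)=0$ on each side, conjugates ${\mathcal{L}}_{p}$ into the standard heat generator $\frac{1}{2}\partial_{yy}$ on each half-line. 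Consequently, away from the interface $\{x=0\}$ any fundamental solution must be a superposition of heat kernels in the variable $f$: a direct Gaussian in $f(z)-f(x)$, namely $E^{-}$ in (\ref{E}), plus a reflected image term $E^{+}$ built on $\llvert f(z)\rrvert+\llvert f(x)\rrvert$, weighted by an unknown reflection coefficient $\beta$ and an unknown side-dependent normalisation. This is precisely the ansatz (\ref{e:5})--(\ref{eq:A}).

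Next I would fix the two free constants by imposing the matching (transmission) conditions that a weak solution of a divergence-form equation must satisfy across $\{x=0\}$: continuity of $G$ and continuity of the conormal flux $\rho(x)A(x)\,\partial_{x}G(u,x,z)$. Because $f$ and $\llvert f\rrvert$ are continuous in $x$, continuity of $G$ itself holds automatically; the only nontrivial condition comes from the kink of $\llvert f(x)\rrvert$ at the origin. Differentiating $E^{-}$ and $E^{+}$ in $x$ and evaluating the one-sided limits at $x=0^{\pm}$, the flux from the left equals $\rho_{1}\sqrt{a_{1}}\,(1+\beta)$ times a (side-dependent) common factor, while the flux from the right equals $\rho_{2}\sqrt{a_{2}}\,(1-\beta)$ times the same factor; equating them yields exactly
\[
\beta=\frac{\rho_{2}\sqrt{a_{2}}-\rho_{1}\sqrt{a_{1}}}{\rho_{2}\sqrt{a_{2}}+\rho_{1}\sqrt{a_{1}}},
\]
as claimed in (\ref{eq:f}), and one checks that both the case $z\le 0$ and the case $z>0$ produce this same value. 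The prefactors $1/\sqrt{a_{1}}$ and $1/\sqrt{a_{2}}$ are then pinned down by the initial condition: as $u\downarrow 0$ the image term $E^{+}$ concentrates on the null set $\{x=z=0\}$ and is negligible, while the direct term $E^{-}$ becomes a Gaussian of variance $a_{i}u$ in the variable $z$, so that $m(u)\frac{1}{\sqrt{a_{i}}}E^{-}$ is exactly the normalised density; a one-line Gaussian integral then gives $\int_{\mathbb{R}}G(u,x,z)\,dz\to 1$, confirming that $G(0,x,\cdot)=\delta_{x}$ in the distributional sense and that $G$ is a bona fide transition density.

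Putting these together, the function (\ref{e:5}) solves $\partial_{u}G={\mathcal{L}}_{p}^{(x)}G$ classically on each open half-line (being a heat kernel pulled back through $f$), meets the correct transmission conditions across $\{x=0\}$, and attains the right initial datum; hence it is a fundamental solution. For uniqueness I would appeal to the same theory used in the proof of Proposition \ref{exSol}: a uniformly elliptic operator in divergence form with bounded measurable coefficients admits a unique fundamental solution obeying two-sided Aronson Gaussian bounds, and the explicit $G$ above manifestly satisfies such bounds. The step I expect to require the most care is the flux-matching computation, where one must track the sign changes of $\partial_{x}\llvert f(x)\rrvert$ across the origin and confirm that it is the conormal derivative $\rho A\,\partial_{x}G$ — rather than the plain derivative $\partial_{x}G$ — that is continuous; getting these signs right is exactly what forces the precise form of $\beta$, and a careless treatment of the interface would produce a spurious coefficient.
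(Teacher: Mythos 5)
Your proposal cannot be matched against an in-paper argument, because the paper does not prove Proposition \ref{pro:sol-fund} at all: it states the formula and refers the reader to \cite{CZ,Zi,Zil} and \cite{MZ0} for a proof. Judged on its own terms, your construct-and-verify scheme is sound. The conjugation of ${\mathcal{L}}_{p}$ to $\frac{1}{2}\partial_{yy}$ on each half-line via $y=f(x)$ is correct, as is the identification of the interface conditions for a divergence-form operator (continuity of $G$ and of the flux $\rho A\,\partial_{x}G$ at $x=0$), and your flux bookkeeping checks out: for fixed $z>0$ the one-sided fluxes at the origin are $\rho_{1}\sqrt{a_{1}}(1+\beta)$ and $\rho_{2}\sqrt{a_{2}}(1-\beta)$ times the common factor $m(u)\,f(z)\,u^{-1}a_{2}^{-1/2}\exp (-f(z)^{2}/2u )$, the case $z\le 0$ yields the same equation with a different common factor, and solving gives exactly the $\beta$ of (\ref{eq:f}). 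Two refinements would make the sketch airtight. First, the claim that any fundamental solution \emph{must} be a superposition of a direct and an image Gaussian is only a motivation for the ansatz; what your argument actually needs (and delivers) is that the specific $G$ of (\ref{e:5}) \emph{is} a fundamental solution, with uniqueness handled separately. Second, the initial condition can be checked exactly rather than asymptotically: substituting $w=f(z)$ on each half-line and using that $E^{+}=E^{-}$ whenever $x$ and $z$ lie on opposite sides of the origin, one finds $\int_{\mathbb{R}}G(u,x,z)\,dz=1$ for every $u>0$, which combined with the concentration of $E^{-}$ gives $G(u,x,\cdot)\rightarrow \delta _{x}$ cleanly. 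Finally, your uniqueness appeal is legitimate and consistent with the paper's own framework: the references \cite{Aronson} and \cite{Dan}, already invoked in the proof of Proposition \ref{exSol}, provide the Gaussian bounds and the uniqueness of the weak fundamental solution for uniformly elliptic divergence-form operators with bounded measurable coefficients, a class to which your explicit $G$ manifestly belongs.
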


In this section, by making an in-depth study of the terms $f, A^{-}$ and
$A^{+}$ defined in Expressions (\ref{eq:A}) and (\ref{eq:f}), we will
prove the following theorem.

%t6 #&#
\begin{thm}
\label{Teo.part}
Let u be the mild solution\index{mild solution} to Equation \textup{(\ref{e:2p})} and $\tilde{V}
_{N}$ be the sequence given by \textup{(\ref{var})}. Suppose that the
coefficients $A$ and $\rho $ defined in \textup{(\ref{eq:coefA})} satisfy
%
%e33 #&#
\begin{equation}
\label{CrhoA} \max \biggl( 1 , \frac{\sqrt{a_{1}}}{\sqrt{a_{2}}} \biggr) \le
\frac{
\rho _{2}}{\rho _{1}} .
\end{equation}
Then the following is valid:
\begin{enumerate}%
\item
\begin{equation*}
\tilde{V}_{N} \; \overset{{Law} } {\longrightarrow } \;
\mathcal{N}(0,1).
\end{equation*}
Moreover, if we denote by $d$ one of the metrics on the space of
probability measures on ${\mathbb{R}}$, including the Kolmogorov,\index{Kolmogorov}
Wasserstein\index{Wasserstein} and Total Variation measures, then for $N$ large enough
\begin{equation*}
d \bigl( {\tilde{V}}_{N},\mathcal{N}(0,1) \bigr)\leqslant
\frac{C}{
\sqrt{N}}.
\end{equation*}%
\item
The sequence $
\displaystyle
( {\tilde{V}}_{N})_{N \ge 1}$ satisfies an ASCLT.\index{almost sure central limit theorem (ASCLT)}
\end{enumerate}
\end{thm}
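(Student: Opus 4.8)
The plan is to recognize Theorem~\ref{Teo.part} as a special case of the general results already established: once we know that the explicit fundamental solution $G$ of Proposition~\ref{pro:sol-fund} satisfies Conditions $H_1([0,1])$ and $H_3([0,1])$, the convergence in law together with the rate $C/\sqrt N$ follows from Theorem~\ref{T:principal}, and the ASCLT from Theorem~\ref{ASCLM}. Thus the whole proof reduces to verifying these two conditions for the kernel given in (\ref{e:5})--(\ref{eq:f}), and this is exactly where the assumption (\ref{CrhoA}) enters. I first observe that (\ref{CrhoA}) is equivalent to the two inequalities $\rho_2\ge\rho_1$ and $\rho_2\sqrt{a_2}\ge\rho_1\sqrt{a_1}$, the latter being precisely $\beta\ge 0$, so that the constant $\beta$ of (\ref{eq:f}) lies in $[0,1)$.

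Next I would compute the covariance structure explicitly. By Wiener's isometry, for $x,y\in[0,1]$ the quantities controlling both conditions are the integrals $\Psi(u;x,y):=\int_{\mathbb R}G(u,x,z)G(u,y,z)\,dz$. The decisive simplification is that $x,y\in[0,1]$ forces $x,y\ge 0$, hence $f(x)=x/\sqrt{a_2}$ and $|f(x)|=f(x)$; moreover for $z\le 0$ one has $|f(z)-f(x)|=|f(z)|+|f(x)|$, so that $E^-=E^+$ there and the kernel collapses to a single Gaussian, while for $z>0$ the kernel is the standard reflected Gaussian with diffusivity $a_2$. Splitting the $z$-integral at the interface and carrying out the resulting elementary Gaussian integrations yields a closed form for $\Psi(u;x,y)$, which after integration in $u$ can be expanded in powers of $|y-x|$.

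I would then read off the two conditions from this expansion. For $H_1([0,1])$ the increment variance is $\int_0^t\bigl(\Psi(u;y,y)-2\Psi(u;x,y)+\Psi(u;x,x)\bigr)du$, which behaves like $c_1|y-x|+O(|y-x|^2)$; the content of (\ref{CrhoA}) is exactly that, with $\beta\ge 0$ and $\rho_2\ge\rho_1$, the reflection term $\beta E^+$ cannot cancel the dominant contribution, so that $c_1>0$ and the lower bound $C_8\,|y-x|\le\mathbb E(u(t,y)-u(t,x))^2$ of Lemma~\ref{th:spatial-reg} holds. For $H_3([0,1])$ the quantity $\int_0^t\!\int_{\mathbb R}\Delta_hG(t-s,x,z)\Delta_hG(t-s,y,z)\,ds\,dz$ is, after isometry, a mixed second-order spatial difference of the same covariance; a naive bound only gives $O(h)$, and the crux is that for the distinct grid points occurring in the proofs (so $|x-y|\ge h$) the leading linear part $c_1|\cdot|$ cancels identically under this second difference, leaving the quadratic correction, which is $O(h^2)$ uniformly in $x,y\in[0,1]$. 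This furnishes the constant $C_7$ of $H_3([0,1])$.

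I expect the main obstacle to be the lower bound $H_1$. The Gaussian integrals defining $\Psi$ are elementary but lengthy, precisely because of the reflection term and the interface at $0$ (the behaviour as $x\to 0^+$, where the kernel genuinely feels both media, is the delicate point), and showing that the coefficient $c_1$ of $|y-x|$ stays \emph{strictly} positive uniformly for $x,y\in[0,1]$ is exactly what forces the hypothesis (\ref{CrhoA}); this is the heart of the argument. The estimate $H_3$, by contrast, is mainly a matter of arranging the expansion so that the linear term visibly cancels in the second difference; here I can rely on the spatial expansion of the increments already obtained in \cite{ZZ2}.
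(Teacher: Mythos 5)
Your high-level reduction is the same as the paper's: verify Conditions $H_1([0,1])$ and $H_3([0,1])$ for the explicit kernel of Proposition \ref{pro:sol-fund}, then invoke Theorems \ref{T:principal} and \ref{ASCLM}. Several of your observations are correct, and one is even sharper than the paper's own formulation: the equivalence of (\ref{CrhoA}) with the pair $\rho_2\ge\rho_1$, $\beta\ge 0$; the identity $E^-=E^+$ on $\{z\le 0\}$ when $x\ge 0$; and the remark that the $O(h^2)$ bound in $H_3$ can only hold for $|x-y|\ge h$ (on the diagonal it is incompatible with $H_1$), which is indeed all that the proofs of Theorems \ref{V}, \ref{T:principal} and \ref{ASCLM} actually use.

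The genuine gap is that you have located the hypothesis (\ref{CrhoA}) in the wrong condition, and as a result your plan for $H_3$ would not go through. First, $H_1([0,1])$ needs no hypothesis at all: since $|\beta|<1$ automatically, the paper obtains the lower bound for arbitrary admissible coefficients by combining the norm inequality of \cite{DW} with a Plancherel-type lower bound (Lemma \ref{mine}), the key factor being $(1-|\beta|)^2>0$; so your claim that ``$c_1>0$ is exactly what forces (\ref{CrhoA})'' is incorrect. Second, and more seriously, the cancellation of the linear diagonal kink under the mixed second difference --- your stated ``crux'' for $H_3$ --- only disposes of the full-line contribution of the $E^-$ terms. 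The kernel also produces a half-line interface term (the term $L_3$ in the paper's decomposition $L+K=L_1+\beta L_2+\bigl(\sqrt{a_2/a_1}\,(1-\beta)^2+\beta^2-1\bigr)L_3$, supported on $\{z\le 0\}$), whose mixed second difference is of order $h$, not $h^2$, when $x$ and $y$ lie within a few multiples of $h$ of the interface point $0$: already for $x=0$, $y=h$, which satisfies $|x-y|\ge h$, it behaves like $c\,h$. No Taylor cancellation removes it; the paper kills it by a sign argument: $L_3\ge 0$ while its coefficient equals $\frac{-4\rho_1\sqrt{a_1a_2}}{(\rho_2\sqrt{a_2}+\rho_1\sqrt{a_1})^2}(\rho_2-\rho_1)$, which is $\le 0$ exactly when $\rho_2\ge\rho_1$ (and $\beta\ge 0$ is invoked for the cross term $\beta L_2$). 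This is where (\ref{CrhoA}) genuinely enters; without $\rho_2\ge\rho_1$ the restricted $H_3$ bound actually fails near the interface, since the remaining terms $L_1$, $L_2$ are $O(h^2)$ there and cannot compensate an $O(h)$ contribution of the wrong sign. Your proposal never confronts this term and uses (\ref{CrhoA}) nowhere in the $H_3$ step, so the claimed uniform $O(h^2)$ bound does not follow from the cancellation you invoke.
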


%r3 #&#
\begin{remark}
If $a_{1}= a_{2} = 1$ and $\rho _{1}= \rho _{2} = 2$, then Condition
(\ref{CrhoA}) is well satisfied. Thus, the result of Theorem
\ref{Teo.part} applies to the standard stochastic heat equation\index{stochastic heat equation} with
the time-space white noise and it corresponds exactly to that obtained in
\cite{TT}.
\end{remark}

To prove Theorem \ref{Teo.part}, we shall first establish the following
lemmas.

%s4.1 #&#
\subsection{Preliminary lemmas}%
\label{sec4.1}
%
%l3 #&#
\begin{lemma}
\label{Majf}
Consider $f$, the function defined in \textup{(\ref{eq:f})}. For every
$x, y \in {\mathbb{R}}$,
\begin{equation*}
\min \biggl( \frac{1}{\sqrt{a_{2}}}, \frac{1}{\sqrt{a_{1}}} \biggr) \llvert y-x
\rrvert \le \bigl\llvert f(y) - f(x) \bigr\rrvert \le \max \biggl(
\frac{1}{\sqrt{a_{2}}}, \frac{1}{\sqrt{a_{1}}} \biggr) \llvert y-x \rrvert .
\end{equation*}
\end{lemma}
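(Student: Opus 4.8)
The plan is to exploit the fact that the function $f$ defined in (\ref{eq:f}) is continuous, strictly increasing and piecewise linear, with slope $1/\sqrt{a_{1}}$ on $(-\infty,0]$ and slope $1/\sqrt{a_{2}}$ on $(0,+\infty)$, both slopes being strictly positive since $a_{1},a_{2}>0$. Writing $m:=\min(1/\sqrt{a_{1}},1/\sqrt{a_{2}})$ and $M:=\max(1/\sqrt{a_{1}},1/\sqrt{a_{2}})$, the asserted inequalities are exactly two-sided Lipschitz bounds for $f$ with constants $m$ and $M$. Since swapping $x$ and $y$ leaves both $\llvert y-x\rrvert$ and $\llvert f(y)-f(x)\rrvert$ unchanged, I may assume without loss of generality that $x\le y$; and since $f$ is increasing, $\llvert f(y)-f(x)\rrvert=f(y)-f(x)\ge 0$.

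I would then distinguish according to the signs of $x$ and $y$. When $x$ and $y$ lie on the same side of the origin (both $\le 0$, or both $>0$), $f$ is linear on the segment joining them, so $f(y)-f(x)$ equals either $(y-x)/\sqrt{a_{1}}$ or $(y-x)/\sqrt{a_{2}}$; in each case the quotient lies between $m(y-x)$ and $M(y-x)$, which is the claim. The only genuinely mixed situation is $x\le 0<y$. Here $f(x)=x/\sqrt{a_{1}}$ and $f(y)=y/\sqrt{a_{2}}$, so that
\[
f(y)-f(x)=\frac{y}{\sqrt{a_{2}}}+\frac{-x}{\sqrt{a_{1}}},
\]
and both summands are nonnegative because $y>0$ and $-x\ge 0$. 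Bounding each coefficient below by $m$ and above by $M$ gives $m\,(y+(-x))\le f(y)-f(x)\le M\,(y+(-x))$, and since $y-x=y+(-x)$ this is precisely $m\llvert y-x\rrvert\le \llvert f(y)-f(x)\rrvert\le M\llvert y-x\rrvert$.

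Alternatively, and perhaps more transparently, I could bypass the case split by using the representation $f(y)-f(x)=\int_{x}^{y} f'(z)\,dz$, valid for $x\le y$ because $f$ is continuous and piecewise $C^{1}$; since $f'(z)\in\{1/\sqrt{a_{1}},1/\sqrt{a_{2}}\}\subset[m,M]$ for almost every $z$, integrating the pointwise bounds $m\le f'(z)\le M$ over $[x,y]$ yields the two inequalities at once. Either way the argument is completely elementary and I expect no real obstacle; the only point requiring a moment's care is the mixed-sign case, where one must observe that the two contributions $y/\sqrt{a_{2}}$ and $-x/\sqrt{a_{1}}$ carry the same nonnegative sign, so that they add rather than partially cancel.
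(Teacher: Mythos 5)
Your proof is correct and follows essentially the same route as the paper: a case analysis on the signs of $x$ and $y$, where the same-sign cases are immediate from linearity and the mixed-sign case $x\le 0<y$ is settled by noting that the two contributions $y/\sqrt{a_{2}}$ and $-x/\sqrt{a_{1}}$ are both nonnegative so each coefficient can be bounded by $m$ and $M$ --- the paper organizes this identical computation as showing $\max(\cdot)\,\llvert y-x\rrvert-\llvert f(y)-f(x)\rrvert\ge 0$ and $\min(\cdot)\,\llvert y-x\rrvert-\llvert f(y)-f(x)\rrvert\le 0$. Your alternative argument via $f(y)-f(x)=\int_{x}^{y}f'(z)\,dz$ with $f'\in[m,M]$ almost everywhere is also valid (indeed cleaner), but the substance of your main argument matches the paper's.
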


\begin{proof}
Expression (\ref{eq:f}) allows to get
%
%e34 #&#
\begin{equation}
\label{eqf} f(y) - f(x) = \lleft \{ %
\begin{array}{@{}r@{\quad}c@{\ }l}
\displaystyle
\frac{y-x}{\sqrt{a_{2}}} &\mbox{if}& y > 0 \; x > 0,
\\
\noalign{\vskip2mm}
\displaystyle
\frac{y-x}{\sqrt{a_{1}}} &\mbox{if}& y \le 0 \; x \le 0,
\\
\noalign{\vskip2mm}
\displaystyle
\frac{y}{\sqrt{a_{1}}} - \frac{x}{\sqrt{a_{2}}} &\mbox{if}& y \le 0 \; x
> 0,
\\
\noalign{\vskip2mm}
\displaystyle
\frac{y}{\sqrt{a_{2}}} - \frac{x}{\sqrt{a_{1}}} &\mbox{if}& y > 0 \; x
\le 0.
\end{array} %
\rright .
\end{equation}
If $xy \ge 0$, both inequalities in Lemma \ref{Majf} are directly
obtained from (\ref{eqf}). If $y > 0$ and $x< 0$,
\begin{eqnarray*}
&& \max \biggl( \frac{1}{\sqrt{a_{2}}}, \frac{1}{\sqrt{a_{1}}} \biggr)
\llvert y-x \rrvert - \bigl\llvert f(y) - f(x) \bigr\rrvert
\\
&=& \max \biggl( \frac{1}{\sqrt{a_{2}}}, \frac{1}{\sqrt{a_{1}}} \biggr) (y-x ) -
\frac{y}{\sqrt{a_{2}}} + \frac{x}{\sqrt{a_{1}}}
\\
&=& y \biggl[ \max \biggl( \frac{1}{\sqrt{a_{2}}}, \frac{1}{\sqrt{a_{1}}} \biggr)
-\frac{1}{\sqrt{a_{2}}} \biggr] - x \biggl[ \max \biggl( \frac{1}{\sqrt{a
_{2}}},
\frac{1}{\sqrt{a_{1}}} \biggr) -\frac{1}{\sqrt{a_{1}}} \biggr]
\\
&> & 0
\end{eqnarray*}
and
\begin{eqnarray*}
&& \min \biggl( \frac{1}{\sqrt{a_{2}}}, \frac{1}{\sqrt{a_{1}}} \biggr)
\llvert y-x \rrvert - \bigl\llvert f(y) - f(x) \bigr\rrvert
\\
&=& \min \biggl( \frac{1}{\sqrt{a_{2}}}, \frac{1}{\sqrt{a_{1}}} \biggr) (y-x ) -
\frac{y}{\sqrt{a_{2}}} + \frac{x}{\sqrt{a_{1}}}
\\
&=& y \biggl[ \min \biggl( \frac{1}{\sqrt{a_{2}}}, \frac{1}{\sqrt{a_{1}}} \biggr)
-\frac{1}{\sqrt{a_{2}}} \biggr] - x \biggl[ \min \biggl( \frac{1}{\sqrt{a
_{2}}},
\frac{1}{\sqrt{a_{1}}} \biggr) -\frac{1}{\sqrt{a_{1}}} \biggr]
\\
& < & 0.
\end{eqnarray*}
The proof of both inequalities in the case where $y < 0$ and
$x > 0$ is similar.
\end{proof}

%l4 #&#
\begin{lemma}
\label{lem:UBp1}
 There exists a universal positive constant
$C$, such that
\begin{equation*}
\int_{0}^{t} \frac{1}{2 \pi u}
\displaystyle \int_{{ \mathbb{R}}} \bigl\llvert E^{+}(u,y,z)-E^{+}(u,x,z)
\bigr\rrvert ^{2} \,dz\; du \leqslant C \; \llvert y-x \rrvert
\end{equation*}
and
\begin{equation*}
\int_{0}^{t} \frac{1}{2 \pi u}
\displaystyle \int_{{ \mathbb{R}}} \bigl\llvert E^{-}(u,y,z)-E^{-}(u,x,z)
\bigr\rrvert ^{2} \,dz\; du \leqslant C \; \llvert y-x \rrvert
\end{equation*}
for every $t > 0$ and $x,y \in { \mathbb{R}}$.
\end{lemma}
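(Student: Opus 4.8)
The plan is to reduce both inequalities to a single canonical estimate on the spatial increment of the Gaussian kernel $g_u(w):=\exp(-w^2/(2u))$, namely
\[
J(a,b):=\int_0^t \frac{1}{u}\int_{\mathbb{R}}\bigl|g_u(\xi-b)-g_u(\xi-a)\bigr|^2\,d\xi\,du \;\le\; 2\pi\,|b-a|,
\]
and then to transfer everything back through the change of variables $\xi=f(z)$. Since $f$ in (\ref{eq:f}) is a strictly increasing piecewise–linear bijection of $\mathbb{R}$ with $f'\in\{1/\sqrt{a_1},1/\sqrt{a_2}\}$ and $f(0)=0$, substituting $\xi=f(z)$ (splitting the integral at $z=0$) has a piecewise–constant Jacobian, so that $dz\le \max(\sqrt{a_1},\sqrt{a_2})\,d\xi$; as all integrands are nonnegative, this inequality is preserved under the substitution. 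I would first record this substitution once and then treat the $E^-$ and $E^+$ terms as two specializations of $J$.

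For the canonical estimate I would compute the inner integral explicitly by completing the square: $\int_{\mathbb{R}}g_u(\xi-a)^2\,d\xi=\sqrt{\pi u}$ and $\int_{\mathbb{R}}g_u(\xi-a)g_u(\xi-b)\,d\xi=\sqrt{\pi u}\,e^{-(a-b)^2/(4u)}$, whence
\[
\int_{\mathbb{R}}\bigl|g_u(\xi-b)-g_u(\xi-a)\bigr|^2\,d\xi = 2\sqrt{\pi u}\Bigl(1-e^{-(a-b)^2/(4u)}\Bigr).
\]
Dividing by $u$, integrating over $(0,t)$ and substituting $v=(a-b)^2/(4u)$ turns $J(a,b)$ into $2\sqrt{\pi}\,|a-b|\int_{(a-b)^2/(4t)}^{\infty}(1-e^{-v})v^{-3/2}\,dv$, which is bounded by $2\sqrt{\pi}\,|a-b|\int_0^\infty(1-e^{-v})v^{-3/2}\,dv=2\sqrt{\pi}\,|a-b|\cdot 2\sqrt{\pi}=2\pi|a-b|$, the last integral being finite (integrand $\sim v^{-1/2}$ near $0$ and $\sim v^{-3/2}$ near $\infty$). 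An equivalent, slightly shorter route is Plancherel: the inner integral equals $u\int_{\mathbb{R}}4\sin^2(\omega(b-a)/2)\,e^{-u\omega^2}\,d\omega$, after which Tonelli, the bound $\int_0^t e^{-u\omega^2}\,du\le \omega^{-2}$, and $\int_{\mathbb{R}}\sin^2(\omega c/2)\,\omega^{-2}\,d\omega=\pi|c|/2$ give the same constant.

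I would then specialize. For $E^-$, the substitution sends the integrand into $|g_u(\xi-f(y))-g_u(\xi-f(x))|^2$, so the canonical bound applies with $a=f(x)$, $b=f(y)$, yielding a bound $C\,|f(y)-f(x)|$. For $E^+$, after substitution the integrand depends only on $|\xi|$ and is therefore even, so I restrict to $\xi\ge 0$ (picking up a factor $2$); assuming without loss of generality $|f(y)|\ge|f(x)|$, I set $\eta=\xi+|f(x)|$, so the integrand becomes $|g_u(\eta+\delta)-g_u(\eta)|^2$ with $\delta=|f(y)|-|f(x)|\ge 0$ over $\{\eta\ge|f(x)|\}$. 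Enlarging this domain to all of $\mathbb{R}$ (legitimate since the integrand is nonnegative) reproduces the canonical increment with shift $\delta$, and $\bigl||f(y)|-|f(x)|\bigr|\le |f(y)-f(x)|$ again gives the bound $C\,|f(y)-f(x)|$. Finally, Lemma~\ref{Majf} converts $|f(y)-f(x)|\le \max(1/\sqrt{a_1},1/\sqrt{a_2})\,|y-x|$, producing both claimed inequalities with a universal constant $C$.

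The step I expect to be the main obstacle is precisely the $E^+$ term: the absolute values $|f(z)|+|f(x)|$ destroy the clean "translation of a single profile" structure that makes $E^-$ immediate. The decisive manoeuvre is to exploit the evenness in $\xi$ to pass to the half-line, then, after the shift $\eta=\xi+|f(x)|$, to re-extend the integration back to the full line and control $\bigl||f(y)|-|f(x)|\bigr|$ by $|f(y)-f(x)|$. Everything else is a standard Gaussian computation plus the bounded Jacobian of $f$.
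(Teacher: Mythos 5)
Your proof is correct, and its overall skeleton coincides with the paper's: reduce both terms to the increment estimate for translates of the heat kernel, handle the $E^+$ term by exploiting that after substitution the integrand depends only on $|\xi|$, shift by $|f(x)|$, re-extend the domain of integration (legitimate by nonnegativity), and finish with $\bigl||f(y)|-|f(x)|\bigr|\le |f(y)-f(x)|$ together with Lemma~\ref{Majf}. The genuine difference lies in how the canonical estimate is established. The paper proves
\begin{equation*}
\int_{0}^{t}\!\!\int_{\mathbb{R}}\bigl[p_{s}(v+h)-p_{s}(v)\bigr]^{2}\,dv\,ds\le C|h|
\end{equation*}
via Plancherel, Fubini, a rescaling of the frequency variable, and the convergence of $\int_{0}^{\infty}\frac{1-\cos \xi}{\xi^{2}}\,d\xi$; your primary route instead evaluates the inner $\xi$-integral in closed form by completing the square, getting $2\sqrt{\pi u}\,\bigl(1-e^{-(a-b)^{2}/(4u)}\bigr)$, and then bounds the time integral through the substitution $v=(a-b)^{2}/(4u)$ and the convergent integral $\int_{0}^{\infty}(1-e^{-v})v^{-3/2}\,dv=2\sqrt{\pi}$. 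This is slightly more elementary (no Fourier analysis needed) and yields an exact expression for the inner integral; the Plancherel variant you sketch as an alternative is precisely the paper's argument. Two cosmetic remarks: your intermediate prefactor should be $\sqrt{\pi}\,|a-b|$ rather than $2\sqrt{\pi}\,|a-b|$ (as written, your displayed chain would give $4\pi|a-b|$, not $2\pi|a-b|$), which is immaterial since only a universal constant is claimed; and your Jacobian bound $dz\le\max(\sqrt{a_1},\sqrt{a_2})\,d\xi$ packages in one step what the paper does by two explicit half-line changes of variables, with the same effect.
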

\begin{proof}
We present only the proof of the first inequality; the proof of the
second one is similar. By using Expression (\ref{E}), we get
\begin{eqnarray*}
&& \int_{0}^{t} \frac{1}{2 \pi u} \int
_{{ \mathbb{R}}} \bigl\llvert E^{+}(u,y,z)-E^{+}(u,x,z)
\bigr\rrvert ^{2} \,dz\; du
\\
&=& \int_{0}^{t} \int_{{ \mathbb{R}}}
\biggl[ \frac{1}{\sqrt{2\pi u}} \exp \biggl(- \frac{( \llvert  f(z) \rrvert  + \llvert  f(y) \rrvert  )^{2}}{2u} \biggr)
\\
&& {} - \frac{1}{\sqrt{2 \pi u}} \exp \biggl(- \frac{( \llvert  f(z)
 \rrvert  + \llvert  f(x) \rrvert  )^{2}}{2u} \biggr)
\biggr]^{2}\,dz\;du
\\
&=& \int_{0}^{t} \int_{0}^{\infty }
\biggl[ \frac{1}{\sqrt{2\pi u}} \exp \biggl(\frac{(z/\sqrt{a_{2}}+ \llvert  f(y) \rrvert  )^{2}}{2u} \biggr)
\\
&& {} - \frac{1}{\sqrt{2 \pi u}} \exp \biggl(- \frac{(z/\sqrt{a
_{2}}+ \llvert  f(x) \rrvert  )^{2}}{2u} \biggr)
\biggr]^{2}\,dz\;du
\\
&&{}+\int_{0}^{t} \int_{-\infty }^{0}
\biggl[ \frac{1}{\sqrt{2\pi u}} \exp \biggl(-\frac{(-z/\sqrt{a_{1}}+ \llvert  f(y) \rrvert  )^{2}}{2u} \biggr)
\\
&& {}- \frac{1}{\sqrt{2 \pi u}} \exp \biggl(- \frac{(-z/\sqrt{a
_{1}}+ \llvert  f(x) \rrvert  )^{2}}{2u} \biggr)
\biggr]^{2}\,dz\;du.
\end{eqnarray*}
The changes of variables $Z= z/\sqrt{a_{2}}+\llvert f(x) \rrvert $ in the first integral and
$Z= -z/\sqrt{a_{1}}+\llvert f(x) \rrvert $, in the second one give
%
%e35 #&#
\begin{eqnarray}
\label{eqe+} %
&& \int_{0}^{t}
\frac{1}{2 \pi u} \int_{{ \mathbb{R}}} \bigl\llvert
E^{+}(u,y,z)-E^{+}(u,x,z) \bigr\rrvert ^{2}
\,dz\; du
\nonumber
\\
&=& \sqrt{a_{2}} \; \int_{0}^{t}
\int_{ \llvert  f(x)  \rrvert  }^{+\infty } \biggl[ \frac{1}{\sqrt{2\pi u}}
\exp \biggl(-\frac{
  (Z+( \llvert  f(y)
 \rrvert  - \llvert  f(x) \rrvert  )  )^{2}}{2u} \biggr)
\nonumber
\\
&& {}- \frac{1}{\sqrt{2 \pi u}} \exp \biggl(- \frac{Z^{2}}{2u} \biggr) \;
\biggr]^{2}\,dZ\;du
\nonumber
\\
&& {}+ \sqrt{a_{1}} \; \int_{0}^{t}
\int_{ \llvert  f(x)  \rrvert  }^{+\infty } \biggl[ \frac{1}{\sqrt{2\pi u}}
\exp \biggl(-\frac{
  (Z+( \llvert  f(y)
 \rrvert  - \llvert  f(x) \rrvert  )  )^{2}}{2u} \biggr)
\nonumber
\\
&& {} - \frac{1}{\sqrt{2 \pi u}} \exp \biggl(- \frac{Z^{2}}{2u} \biggr) \;
\biggr]^{2}\,dZ\;du.
\end{eqnarray}
Therefore,
%
%e36 #&#
\begin{eqnarray}
&& \int_{0}^{t}
\frac{1}{2 \pi u} \int_{{ \mathbb{R}}} \bigl\llvert
E^{+}(u,y,z)-E^{+}(u,x,z) \bigr\rrvert ^{2}
\,dz\; du
\nonumber
\\
&\leqslant & 2 \max ( \sqrt{a_{1}}, \sqrt{a_{2}}) \;
\int_{0}^{t} \int_{{ \mathbb{R}}}
\biggl[ \frac{1}{\sqrt{2\pi u}} \exp \biggl(-\frac{   (Z+\tilde{H}  )^{2}}{2u} \biggr)
\nonumber
\\
&& {} - \frac{1}{\sqrt{2 \pi u}} \exp \biggl(- \frac{Z^{2}}{2u} \biggr) \;
\biggr]^{2}\,dZ\;du
\nonumber
\\
&= & 2 \max ( \sqrt{a_{1}}, \sqrt{a_{2}}) \; \int
_{0}^{t} \int_{{ \mathbb{R}}} \bigl[
p_{u}(Z+\tilde{H}) - p_{u}(Z) \; \bigr]
^{2}\,dZ\;du,
\end{eqnarray}
where $\tilde{H}= \llvert f(y)\rrvert-\llvert f(x)\rrvert$ and $p_{u}$ denotes the heat kernel
defined by
%
%e37 #&#
\begin{equation}
\label{GF} p_{t}(x)= \frac{1}{\sqrt{2\pi t}} \exp \biggl(
\frac{-x^{2}}{2 t} \biggr), \quad \mbox{for every}\ t > 0\ \mbox{and}\ x\in
\mathbb{R}.
\end{equation}

It is known that the Fourier transform of $p_{u}$ is %given by
\begin{equation*}
\mathcal{F}(p_{t}) (\xi )= e^{- t \xi ^{2} /2} \quad \forall \,\xi
\in { \mathbb{R}},\,t>0.
\end{equation*}
By virtue of the Plancherel theorem we can write
\begin{eqnarray*}
&& \int_{0}^{t}{ds}\int
_{\mathbb{R}}{ \bigl[p_{s}(v+ h)-p_{s}(v)
\bigr]^{2}dv}
\\
&=& \frac{1}{2\pi } \int_{0}^{t}{ds}\int
_{-\infty }^{\infty }{ \bigl\llvert e^{- s \xi ^{2}/2+i\xi h}-
e^{-s
\xi ^{2}/2} \bigr\rrvert ^{2}\;d\xi }
\\
&=& \frac{1}{\pi } \int_{0}^{t}{ds}\int
_{-\infty }^{\infty }{e^{-s \xi
^{2}} \bigl(1-\cos (h
\xi ) \bigr)\; d\xi }
\end{eqnarray*}
for every $h \in {\mathbb{R}}$. Applying Fubini's Theorem and using the
fact that the functions cosine and $\xi \longmapsto \frac{1-\cos (h \xi )}{\xi ^{2}}$ are even we get:
%
%e38 #&#
\begin{eqnarray}
\label{eq:planch} %
\int_{0}^{t}{ds}
\int_{\mathbb{R}} \bigl[p_{s}(v+h)-p_{s}(v)
\bigr]^{2} dv &=& \frac{1}{\pi }\int_{-\infty }^{\infty }
\Biggl[\int_{0} ^{t}e^{-s \xi ^{2}}\;ds
\Biggr]\; \bigl(1-\cos (h \xi ) \bigr) \;d\xi
\nonumber
\\
&= & \frac{1}{ \pi }\int_{-\infty }^{\infty }
\bigl(1-e^{-t \xi ^{2}} \bigr)\frac{1-
\cos (h \xi )}{\xi ^{2}}\; d\xi
\nonumber
\\
&=& \frac{2}{ \pi }\int_{0}^{\infty }
\bigl(1-e^{-t \xi ^{2}} \bigr) \frac{1-\cos ( \llvert h \rrvert
\xi )}{\xi ^{2}}\; d\xi .  %
\end{eqnarray}

Suppose that $h \neq 0$. By a simple change of variables in
(\ref{eq:planch}), using the fact that
\begin{equation*}
\forall \theta \ge 0 \quad 1-\exp (-\theta )\leqslant 1,
\end{equation*}
we obtain
%
%e39 #&#
\begin{eqnarray}
\label{eq:39'} %
\int_{0}^{t}{ds}
\int_{\mathbb{R}} \bigl[p_{s}(v+h)-p_{s}(v)
\bigr]^{2} dv &=& \ \displaystyle \frac{2\, \llvert h \rrvert }{ \pi }\int
_{0}^{\infty } \bigl(1-e^{-t \frac{\xi
^{2}}{h^{2}}} \bigr)
\frac{1-\cos ( \xi )}{\xi ^{2}}\; d\xi
\nonumber
\\
& \leqslant & \frac{2\, \llvert h \rrvert }{ \pi }\int_{0}^{\infty }
\frac{1-\cos ( \xi )}{\xi ^{2}} \; d\xi . %
\end{eqnarray}
The function $g: \xi \longmapsto \frac{1-\cos ( \xi )}{\xi ^{2}}$ is continuous on the
interval $(0, + \infty )$ and consequently it is locally integrable. In
addition, on the one hand, $\lim_{\xi \rightarrow 0} g(\xi ) = \frac{1}{2}$, which implies that
$g$ is integrable in a neighbourhood of $0$. On the other hand,
$\llvert g(\xi )\rrvert \le \frac{2}{\xi ^{2}} $ for every $\xi > 1$ and $\int_{1}^{+\infty } \frac{1}{\xi ^{2}} d\xi < \infty $, which entails the
integrability of $g$ on a neighbourhood of $+ \infty $. From all this,
one can deduce that the integral $\int_{0}^{\infty } \frac{1-\cos ( \xi )}{\xi ^{2}}\; d\xi $ is convergent
and, consequently, by using (\ref{eq:39'}),
%
%e40 #&#
\begin{equation}
\label{eq:36} \displaystyle \int_{0}^{t}{ds}
\int_{\mathbb{R}} \bigl[p_{s}(v+h)-p_{s}(v)
\bigr]^{2} dv \leqslant C \llvert h \rrvert ,
\end{equation}
for every real $h \neq 0$. Morover, Inequality (\ref{eq:36}) is
obviously true for $h = 0$. Thus, (\ref{eq:36}) is statisfied for every
$h \in {\mathbb{R}}$.

This and Inequality (\ref{eqe+}) imply that
\begin{eqnarray*}
\int_{0}^{t} \frac{1}{2 \pi u}
\int_{{ \mathbb{R}}} \bigl\llvert E^{+}(u,y,z)-E^{+}(u,x,z)
\bigr\rrvert ^{2} \,dz\; du &\le & C \bigl\llvert \bigl\llvert f(y)
\bigr\rrvert - \bigl\llvert f(x) \bigr\rrvert \bigr\rrvert
\\[-2pt]
& \le & C \bigl\llvert f(y) -f(x) \bigr\rrvert
\\[-2pt]
&\le & C \llvert y-x \rrvert ,
\end{eqnarray*}
where in the last inequality we used Lemma \ref{Majf}.
\end{proof}

%l5 #&#
\begin{lemma}
\label{mine}
For every $A > 0$ and $t\in [0,T]$, there exists a positive
constant $c$ such that
\begin{equation*}
\int_{0}^{t} \frac{1}{2 \pi u} \int
_{{ \mathbb{R}}} \bigl\llvert E^{-}(t-s,y,z)-E
^{-}(t-s,x,z) \bigr\rrvert ^{2} \,dz\; du\geqslant \, c
\; \llvert y-x \rrvert
\end{equation*}
and
\begin{equation*}
\int_{0}^{t} \frac{1}{2 \pi u} \int
_{{ \mathbb{R}}} \bigl\llvert E^{+}(t-s,y,z)-E
^{+}(t-s,x,z) \bigr\rrvert ^{2} \,dz\; du\geqslant \, c
\; \llvert y-x \rrvert
\end{equation*}
for every $x,y \in [0,A]$.
\end{lemma}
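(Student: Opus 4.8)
The plan is to treat both inequalities with the same device used for the upper bound in Lemma~\ref{lem:UBp1}, reading the first argument as the integration variable $u$. Since $\frac{1}{\sqrt{2\pi u}}E^{-}(u,x,z)=p_u(f(z)-f(x))$ and $\frac{1}{\sqrt{2\pi u}}E^{+}(u,x,z)=p_u(|f(z)|+|f(x)|)$, where $p_u$ is the heat kernel~(\ref{GF}), each space integral becomes an $L^{2}$-norm of an increment of $p_u$, which after an affine change of variable is evaluated by the Plancherel identity~(\ref{eq:planch})--(\ref{eq:39'}). The new point, compared with the upper bound, is that for a lower estimate one cannot enlarge the domain of integration, so the geometry of the images of $z\mapsto f(z)$ and $z\mapsto |f(z)|$ must be tracked carefully, and the hypothesis $x,y\in[0,A]$ (boundedness of $|y-x|$, hence of $|f(y)-f(x)|$ by Lemma~\ref{Majf}) will be exactly what keeps the relevant constant strictly positive.

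I would first settle the $E^{-}$ inequality. Writing $\frac{1}{2\pi u}|E^{-}(u,y,z)-E^{-}(u,x,z)|^{2}=|p_u(f(z)-f(y))-p_u(f(z)-f(x))|^{2}$ and substituting $w=f(z)$ (a bijection of $\mathbb{R}$ with $dz=\sqrt{a_1}\,dw$ on $\{w<0\}$ and $dz=\sqrt{a_2}\,dw$ on $\{w>0\}$), the Jacobian is bounded below by $\min(\sqrt{a_1},\sqrt{a_2})$, so, with $v=w-f(x)$ and $h=f(x)-f(y)$,
\[ \int_0^t\frac{1}{2\pi u}\int_{\mathbb{R}}|E^{-}(u,y,z)-E^{-}(u,x,z)|^{2}\,dz\,du\ \ge\ \min(\sqrt{a_1},\sqrt{a_2})\int_0^t\int_{\mathbb{R}}\bigl[p_u(v+h)-p_u(v)\bigr]^{2}\,dv\,du. \]
By~(\ref{eq:39'}) the last integral equals $\frac{2|h|}{\pi}\int_0^{\infty}(1-e^{-t\xi^{2}/h^{2}})\frac{1-\cos\xi}{\xi^{2}}\,d\xi$. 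Here $x,y\in[0,A]$ is used twice through Lemma~\ref{Majf}: on the one hand $|h|=|f(y)-f(x)|\le \max(1/\sqrt{a_1},1/\sqrt{a_2})\,|y-x|\le H_0:=\max(1/\sqrt{a_1},1/\sqrt{a_2})\,A$, whence $t/h^{2}\ge t/H_0^{2}$ and, since $\theta\mapsto 1-e^{-\theta}$ increases, the $\xi$-integral is bounded below by the strictly positive constant $\int_0^{\infty}(1-e^{-t\xi^{2}/H_0^{2}})\frac{1-\cos\xi}{\xi^{2}}\,d\xi$; on the other hand $|h|\ge\min(1/\sqrt{a_1},1/\sqrt{a_2})\,|y-x|$. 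Multiplying these gives the desired bound $c\,|y-x|$.

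For the $E^{+}$ inequality the same substitution, now with $w=|f(z)|$ (which sweeps $[0,\infty)$ twice, contributing the factor $\sqrt{a_1}+\sqrt{a_2}$ and the lower limit already visible in~(\ref{eqe+})), reduces the left-hand side to $(\sqrt{a_1}+\sqrt{a_2})\int_0^t\int_{a}^{\infty}[p_u(v+h)-p_u(v)]^{2}\,dv\,du$ with $a=|f(x)|$ and $h=|f(y)|-|f(x)|$. This half-line restriction $v\ge a$ is exactly where the difficulty lies and is the step I expect to be the main obstacle. In the $E^{-}$ case the places where the kernel increment concentrates ($v=0$ and $v=-h$) sit inside the full line and are what force the linear-in-$|y-x|$ lower bound; for $E^{+}$ with $x>0$ that concentration sits at the boundary $v=a=|f(x)|>0$, so as $y\to x$ (hence $h\to 0$ on a scale far below $a$) only the smooth $O(h^{2})$ part of the increment survives on $[a,\infty)$, and one cannot simply transcribe the Plancherel computation. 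My approach would be to isolate this behaviour by the parabolic scaling $v=h\tilde v$, $u=h^{2}\tilde u$, which turns the integral into $h\,(\sqrt{a_1}+\sqrt{a_2})\int_0^{t/h^{2}}\int_{a/h}^{\infty}[p_{\tilde u}(\tilde v+1)-p_{\tilde u}(\tilde v)]^{2}\,d\tilde v\,d\tilde u$, and then to seek a sub-rectangle of the $(\tilde u,\tilde v)$-domain abutting $\tilde v=a/h$ on which a clean uniform lower estimate is available; controlling the dependence on the parameters $a/h$ and $t/h^{2}$ uniformly over $x,y\in[0,A]$ is the crux of the matter, and it is here that the interplay with the dominant $E^{-}$ term (together with the nonnegativity of the $E^{+}$ contribution) should be invoked when one passes from this lemma to the verification of Condition $H_{1}([0,1])$ for the full kernel $G$ of~(\ref{e:5}).
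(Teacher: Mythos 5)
Your treatment of the first ($E^{-}$) inequality is correct and is essentially the paper's own proof. The paper performs the two changes of variables $Z=z/\sqrt{a_{2}}-f(x)$ on $\{z>0\}$ and $Z=z/\sqrt{a_{1}}-f(x)$ on $\{z<0\}$, whose images $(-f(x),+\infty)$ and $(-\infty,-f(x))$ together tile $\mathbb{R}$, bounds both Jacobian factors from below by $\min(\sqrt{a_{1}},\sqrt{a_{2}})$, and then runs the Plancherel computation; it extracts the positive constant by restricting the resulting integral to $\xi\geq 1/\tilde{K}$ with $\tilde{K}=f(y)-f(x)\leq A/\sqrt{a_{2}}$, whereas you rescale first and use the monotonicity of $\theta\mapsto 1-e^{-\theta}$. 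These are the same estimate, resting on the two facts you emphasize: the substituted domains cover the whole line, and $|f(y)-f(x)|$ stays bounded because $x,y\in[0,A]$.

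Where your proposal stops, namely the $E^{+}$ inequality, the paper has nothing either: its proof consists of the sentence that the second inequality ``is obtained in the same way,'' and that assertion fails for exactly the reason you isolate. Under the substitution, both half-lines $\{z>0\}$ and $\{z<0\}$ are mapped onto the \emph{same} interval $(|f(x)|,\infty)$, so one is left with $(\sqrt{a_{1}}+\sqrt{a_{2}})\int_{0}^{t}\int_{a}^{\infty}[p_{u}(v+h)-p_{u}(v)]^{2}\,dv\,du$, where $a=|f(x)|$, $h=|f(y)|-|f(x)|$, and the full-line computation cannot be transcribed; the spikes of the kernel increment at $v=0$ and $v=-h$ now lie at fixed distance $a$ \emph{outside} the domain of integration. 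Moreover, your $O(h^{2})$ heuristic is not merely an obstacle to this method: it can be made rigorous and shows that the second inequality is false as stated, so neither your parabolic-scaling rescue nor any other argument can close this gap. Indeed, by Cauchy--Schwarz and Fubini,
\begin{equation*}
\int_{0}^{t}\int_{a}^{\infty}\bigl[p_{u}(v+h)-p_{u}(v)\bigr]^{2}\,dv\,du
\;\leq\; h^{2}\int_{0}^{t}\int_{a}^{\infty}\bigl|\partial_{s}p_{u}(s)\bigr|^{2}\,ds\,du
\;\leq\; \frac{h^{2}}{2a},
\end{equation*}
since $|\partial_{s}p_{u}(s)|^{2}=\frac{s^{2}}{2\pi u^{3}}e^{-s^{2}/u}\leq \frac{s^{2}}{2\pi u^{3}}e^{-a^{2}/(2u)}e^{-s^{2}/(2u)}$ for $s\geq a$ and $\int_{0}^{\infty}u^{-3/2}e^{-a^{2}/(2u)}\,du=\sqrt{2\pi}/a$. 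Taking $x=A/2$ and letting $y\downarrow x$ thus makes the left-hand side of the second inequality $O(|y-x|^{2})$, incompatible with any uniform lower bound $c\,|y-x|$. The correct repair is precisely your closing remark, and it is all the paper ever uses: in the verification of $H_{1}([0,1])$ the $E^{+}$ term enters as $+\beta^{2}\,\Vert E^{+}(u,y,\cdot)-E^{+}(u,x,\cdot)\Vert^{2}\geq 0$ and may simply be discarded, so only the first inequality of the lemma is needed. The second inequality should be dropped from the statement (it survives only when $|f(x)|$ is of order $|h|$ or smaller, e.g.\ when $x=0$, so that the concentration of the kernel increment re-enters the domain of integration).
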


\begin{proof}
We present the proof just for the first inequality. The second is
obtained in the same way. We have
\begin{eqnarray*}
&& \int_{0}^{t}
\frac{1}{2 \pi u} \int_{{ \mathbb{R}}} \bigl\llvert
E^{-}(t-s,y,z)-E ^{-}(t-s,x,z) \bigr\rrvert
^{2} \,dz\; du
\\
&=& \int_{0}^{t}\int_{\mathbb{R}}
\biggl[\frac{1}{\sqrt{2\pi u}} \exp \biggl(-\frac{(f(z)-f(y))^{2}}{2u} \biggr)
\\
&& {} -\frac{1}{\sqrt{2 \pi u}} \exp \biggl(- \frac{(f(z)-f(x))^{2}}{2u} \biggr)
\biggr]^{2} \,dz\;du
\\
&=& \int_{0}^{t}\int_{0}^{\infty }
\biggl[\frac{1}{\sqrt{2\pi u}} \exp \biggl(-\frac{(z/\sqrt{a_{2}}-f(y))^{2}}{2u} \biggr)
\\
&& {} -\frac{1}{\sqrt{2 \pi u}} \exp \biggl(-\frac{(z/\sqrt{a
_{2}}-f(x))^{2}}{2u} \biggr)
\biggr]^{2}\,dz\;du
\\
&& + \int_{0}^{t}\int_{-\infty }^{0}
\biggl[\frac{1}{\sqrt{2\pi u}} \exp \biggl(-\frac{(z/\sqrt{a_{1}}-f(y))^{2}}{2u} \biggr)
\\
&& {} -\frac{1}{\sqrt{2 \pi u}} \exp \biggl(-\frac{(z/\sqrt{a
_{1}}-f(x))^{2}}{2u} \biggr)
\biggr]^{2}\,dz\;du.
\end{eqnarray*}
Applying the changes of variables $Z= z/\sqrt{a_{2}}-f(x)$ in the first
integral and $Z= z/\sqrt{a_{1}}-f(x)$ in the second one we obtain
\begin{eqnarray*}
&& \int_{0}^{t}
\frac{1}{2 \pi u} \int_{{ \mathbb{R}}} \bigl\llvert
E^{-}(t-s,y,z)-E^{-}(t-s,x,z) \bigr\rrvert
^{2} \,dz\; du
\nonumber
\\
&=& \sqrt{a_{2}} \; \int_{0}^{t}
\int_{-f(x)}^{\infty } \biggl[\frac{1}{
\sqrt{2u}} \exp
\biggl(-\frac{  (Z-(f(y)-f(x))  )^{2}}{2u} \biggr)
\nonumber
\\
&& {}- \frac{1}{\sqrt{2\pi u}} \exp \biggl(-\frac{Z^{2}}{2u} \biggr)
\biggr]^{2}dZ\,du
\nonumber
\\
&+& \sqrt{a_{1}} \; \int_{0}^{t}
\int^{-f(x)}_{\infty } \biggl[\frac{1}{
\sqrt{2u}} \exp
\biggl(-\frac{  (Z-(f(y)-f(x))  )^{2}}{2u} \biggr)
\nonumber
\\
&& {}- \frac{1}{\sqrt{2\pi u}} \exp \biggl(- \frac{Z^{2}}{2u} \biggr)
\biggr]^{2},dZ\,du
\nonumber
\\
&\geqslant & \min ( \sqrt{a_{1}}, \sqrt{a_{2}} )\int
_{0}^{t} \int_{\mathbb{R}} \biggl[
\frac{1}{\sqrt{2u}} \exp \biggl(-\frac{
  (Z-(f(y)-f(x))  )^{2}}{2u} \biggr)
\nonumber
\\
&& {} - \frac{1}{\sqrt{2\pi u}} \exp \biggl(-\frac{Z^{2}}{2u} \biggr)
\biggr]^{2}\,dZ\,du
\nonumber
\\
&=& \min ( \sqrt{a_{1}}, \sqrt{a_{2}} ) \int
_{0}^{t} \int_{{ \mathbb{R}}} \bigl[
p_{u}(Z -\tilde{K}) - p_{u}(Z) \; \bigr]
^{2}\,dZ\;du, %
\end{eqnarray*}
where $\tilde{K}=f(y)-f(x)$ and $p_{u}$ is the heat kernel defined by
(\ref{GF}). Without loss of generality we can suppose that $x < y$. So,
\begin{equation*}
0< \tilde{K} = \frac{y-x}{\sqrt{a_{2}}} < \frac{A}{\sqrt{a_{2}}}.
\end{equation*}
Applying the same technique as that used in (\ref{eq:planch}), we get
\begin{eqnarray*}
&& \int_{0}^{t}\int
_{{ \mathbb{R}}} \bigl[ p_{u}(Z -h) -
p_{u}(Z) \; \bigr] ^{2}\,dZ\;du
\\
&=& \frac{1}{2\pi } \int_{0}^{t}{ds}\int
_{-\infty }^{\infty }{ \bigl\llvert e^{- s \xi ^{2}/2-i\xi h}-
e^{-s
\xi ^{2}/2} \bigr\rrvert ^{2}\;d\xi }
\\
&=& \frac{1}{\pi } \int_{0}^{t}{ds}\int
_{-\infty }^{\infty }{e^{-s \xi
^{2}} \bigl(1-\cos (h
\xi ) \bigr)\; d\xi }
\\
&=& \frac{2}{ \pi }\int_{0}^{\infty }
\bigl(1-e^{-t \xi ^{2}} \bigr) \frac{1-\cos ( \llvert h \rrvert
\xi )}{\xi ^{2}}\; \xch{d\xi } {d\xi .}
\end{eqnarray*}
for every $h \in {\mathbb{R}}$. Thus,
%
%e41 #&#
\begin{eqnarray}
\label{eq:40'} %
&& \int_{0}^{t}{ds}
\int_{\mathbb{R}}{ \bigl[p_{s}(Z-
\tilde{K})-p_{s}(Z) \bigr]^{2} dy}
\nonumber
\\
&=& \frac{2}{\pi }\int_{0}^{\infty }
\bigl(1-e^{-t z^{2}} \bigr) \frac{1-\cos (
\tilde{K} z)}{z^{2}}\; dz
\nonumber
\\
&=& \frac{2}{\pi }\int_{0}^{\frac{1}{\tilde{K}}}
\bigl(1-e^{-t z^{2}} \bigr) \frac{1-
\cos (\tilde{K} z)}{z^{2}}\; dz +
\frac{2}{\pi }\int_{\frac{1}{\tilde{K}}}^{\infty }
\bigl(1-e^{-t z^{2}} \bigr) \frac{1-
\cos (\tilde{K} z)}{z^{2}}\; dz
\nonumber
\\
&\ge & \frac{2}{\pi }\int_{\frac{1}{\tilde{K}}}^{\infty }
\bigl(1-e^{-t z^{2}} \bigr) \frac{1-
\cos (\tilde{K} z)}{z^{2}}\; dz,
\end{eqnarray}
where in the last inequality we used the fact that
\begin{equation*}
\displaystyle \frac{2}{\pi }\int_{0}^{\frac{1}{\tilde{K}}}
\bigl(1-e^{-t z^{2}} \bigr) \frac{1-
\cos (\tilde{K} z)}{z^{2}}\; dz \ge 0.
\end{equation*}
Since $1-e^{-t z^{2}} \ge 1-e^{-t \tilde{K}^{-2}}$ for every $z \ge \frac{1}{\tilde{K}}$, from (\ref{eq:40'}) we get
\begin{eqnarray*}
\int_{0}^{t}{ds}\int
_{\mathbb{R}}{ \bigl[p_{s}(Z-\tilde{K})-p_{s}(Z)
\bigr]^{2} dy} &\ge & \frac{2}{\pi } \bigl(1-e^{-t\tilde{K}^{-2}}
\bigr)\int_{\frac{1}{\tilde{K}}}^{
\infty } \frac{1-\cos (\tilde{K} z)}{z^{2}}\; dz
\\
&\ge & \tilde{K} \frac{2}{\pi } \bigl(1-e^{-t\tilde{K}^{-2}} \bigr)\int
_{1}^{\infty } \frac{1-
\cos ( \xi )}{\xi ^{2}}\; d\xi ,
\end{eqnarray*}
where the last inequality is obtained after applying the change of
variables $\xi = \tilde{K}z$.

Now, since
\begin{equation*}
0 < \tilde{K} = \frac{y-x}{\sqrt{a_{2}} } < \frac{A}{\sqrt{a_{2}}},
\end{equation*}
we have
\begin{equation*}
1-e^{-t\tilde{K}^{-2}} \ge 1-e^{-ta_{2} A^{-2}}
\end{equation*}
and consequently
\begin{equation*}
\displaystyle \int_{0}^{t}\int
_{{ \mathbb{R}}} \bigl[ p_{u}(Z -\tilde{K}) -
p_{u}(Z) \; \bigr]^{2}\,dZ\;du \geqslant c \llvert y-x
\rrvert
\end{equation*}
with
\begin{equation*}
c = \frac{2}{\sqrt{a_{2}}\pi } \bigl(1-e^{-ta_{2}A^{-2}} \bigr) \int
_{1}^{
\infty } \frac{1-\cos (z)}{z^{2}}\; dz .\qedhere
\end{equation*}
\end{proof}

%s4.2 #&#
\subsection{Proof of Theorem \ref{Teo.part}}%
\label{sec4.2}
Since Equation (\ref{e:2p}) is a particular case of (\ref{e:2g}), by
Theorems \ref{T:principal} and \ref{ASCLM}, to get Theorem
\ref{Teo.part}, it suffices to show that the fundamental solution\index{fundamental solution}
associated to the operator ${\mathcal{L}}_{p}$ satisfies Conditions
$H_{i}([0,1])$, for $i = 1, 2, 3$. Consider $x, y \in [0,1]; \;x < y$ and $t \in [0,T]$.

%s4.2.1 #&#
\subsubsection{Proof of $H_{1}([0,1])$}%
\label{sec4.2.1}
We have
\begin{eqnarray*}
&& \bigl\llVert \Delta _{ y-x }G(t-s,x,.) \bigr\rrVert
^{2}_{L^{2}([0,t]\times
{ \mathbb{R}})}
\\
&=& \, \int_{0}^{t}\frac{1}{2\pi (t-s)}
\biggl\{ \int_{\mathbb{R}}\frac{1}{A(z)} \bigl[
\bigl(A^{-}(t-s,y,z)-A^{-}(t-s,x,z) \bigr) \,
\mathbf{1}_{\{z \leqslant 0\}}
\\
&& {} + \bigl(A^{+}(t-s,y,z)-A^{+}(t-s,x,z) \bigr)\,
\mathbf{1} _{\{z > 0\}} \bigr]^{2} \,dz \biggr\} \,ds
\\
&=& \int_{0}^{t}\frac{1}{2\pi (t-s)} \biggl\{
\int_{\mathbb{R}}\frac{1}{A(z)} \bigl[ \bigl(E^{-}(t-s,y,z)-E^{-}(t-s,x,z)
\bigr)
\\
&& {} + \,\beta \,sign(z)\, \bigl(E^{+}(t-s,y,z)-E^{+}(t-s,x,z)
\bigr) \bigr] ^{2}\;dz \biggr\} \,ds
\\
& \geqslant & \, \min { \biggl( \frac{1}{a_{1}}, \frac{1}{a_{2}}
\biggr)}\, \int_{0}^{t}\frac{1}{2\pi (t-s)}
\biggl\{ \int_{\mathbb{R}} \bigl\llvert \bigl\llvert
E^{-}(t-s,y,z)-E^{-}(t-s,x,z)\bigr\rrvert
\\
&& {}- \llvert \beta \rrvert \,\bigl\llvert E^{+}(t-s,y,z)-E^{+}(t-s,x,z)
\bigr\rrvert \bigr\rrvert ^{2}\;dz \biggr\} \,ds.
\end{eqnarray*}

According to \cite[page 54]{DW}, we know that
\begin{equation*}
\llVert f-g \rrVert ^{2}_{L^{2}({ \mathbb{R}})}\;\geqslant \,
\frac{1}{4}\, \bigl( \llVert f \rrVert _{L^{2}({ \mathbb{R}})} + \llVert g
\rrVert _{L^{2}({ \mathbb{R}})} \bigr)^{2}\; \biggl\llVert \displaystyle
\frac{f}{ \llVert   f \rrVert   _{L^{2}({ \mathbb{R}})}
}-\frac{g }{ \llVert   g \rrVert   _{L^{2}({ \mathbb{R}})}} \biggr\rrVert ^{2}_{L^{2}( { \mathbb{R}})}
\end{equation*}
for every $f, g\in L^{2}({ \mathbb{R}}); f \neq 0$ and $g \neq 0$ a.e. Thus,
\begin{eqnarray*}
&& \bigl\llVert \Delta _{ y-x }G(t-s,x,.) \bigr\rrVert
^{2}_{L^{2}([0,t]\times{ \mathbb{R}})}
\\
&\geqslant& \frac{1}{4}\, \min { \biggl(\frac{1}{a_{1}},
\frac{1}{a_{2}} \biggr)}\, \int_{0}^{t}
\frac{I(s)}{2\pi (t-s)}
\\
&&{}\times \bigl( \bigl\llVert E^{-}(t-s,y,.)-E^{-}(t-s,x,.)
\bigr\rrVert
\\
&&\quad {} +\, \llvert \beta \rrvert \, \bigl\llVert E^{+}(t-s,y,.)-E^{+}(t-s,x,.)
\bigr\rrVert \bigr)^{2} ds,
\end{eqnarray*}
where
\begin{eqnarray*}
I(s) &=& \biggl\llVert \frac{  \llvert E^{-}(t-s,y,.)-E^{-}(t-s,x,.) \rrvert }{ \llVert   E
^{-}(t-s,y,.)-E^{-}(t-s,x,.) \rrVert   }
\\
&& {}- \llvert \beta \rrvert \,\frac{  \llvert E^{+}(t-s,y,.)-E^{+}(t-s,x,.) \rrvert }{  \llVert
E^{+}(t-s,y,.)-E^{+}(t-s,x,.) \rrVert   } \biggr\rrVert
^{2},
\end{eqnarray*}
and $\| . \|$ denotes $\| . \|_{L^{2}({\mathbb{R}})}$. On the one hand we have
\begin{eqnarray*}
&&\!\!\!\! I(s) = 1+\beta ^{2}
\\
&&\!\!\!\! {}- \frac{2\, \llvert \beta  \rrvert }{  \llVert   E^{-}(t-s,y,z)-E^{-}(t-s,x,z) \rrVert
\, \llVert   E^{+}(t-s,y,z)-E^{+}(t-s,x,z) \rrVert   }
\\
&&\!\!\!\! {}\times \int_{ \mathbb{R}} \bigl\llvert
E^{-}(t-s,y,z)-E^{-}(t-s,x,z) \bigr\rrvert \; \bigl
\llvert E^{+}(t-s,y,z)-E ^{+}(t-s,x,z) \bigr\rrvert
\,dz. %
\end{eqnarray*}

On the other hand, applying H\"{o}lder's Inequality, we get
\begin{eqnarray*}
&&\!\!\!\!\int_{\mathbb{R}} \bigl\llvert E^{-}(t-s,y,z)-E^{-}(t-s,x,z)
\bigr\rrvert \; \, \bigl\llvert E^{+}(t-s,y,z)-E^{+}(t-s,x,z)
\bigr\rrvert \;dz
\\
&&\quad \le \bigl\llVert E^{-}(t-s,y,.)-E^{-}(t-s,x,.)
\bigr\rrVert \, \bigl\llVert E^{+}(t-s,y,.)-E ^{+}(t-s,x,.)
\bigr\rrVert.
\end{eqnarray*}
Hence,
\begin{equation*}
I(s) \geqslant 1+\beta ^{2}-2 \llvert \beta \rrvert = \bigl(1-
\llvert \beta \rrvert \bigr)^{2}
\end{equation*}
and therefore,
\begin{eqnarray*}
&& \bigl\llVert \Delta _{ y-x }G(t-s,x,.) \bigr\rrVert
^{2}_{L^{2}([0,t]\times
{ \mathbb{R}})}
\\
&\geqslant & \,\frac{ (1- \llvert \beta  \rrvert )^{2} }{4}\, \min { \biggl( \frac{1}{a_{1}},
\frac{1}{a_{2}} \biggr)}\, \int_{0}^{t}
\frac{1}{2\pi (t-s)}
\\
&&{}\times \bigl( \bigl\llVert E^{-}(t-s,y,.)-E
^{-}(t-s,x,.) \bigr\rrVert
\\
&&{} +\, \llvert \beta \rrvert \, \bigl\llVert E^{+}(t-s,y,.)-E^{+}(t-s,x,.)
\bigr\rrVert \bigr)^{2} ds
\\
& \geqslant & \frac{ (1- \llvert \beta  \rrvert )^{2} }{4}\, \min { \biggl( \frac{1}{a_{1}},
\frac{1}{a_{2}} \biggr)}\, \int_{0}^{t}
\frac{1}{2\pi u}
\\
&& {}\times \; \bigl( \bigl\llVert E^{-}(u,y,.)-E^{-}(u,x,.)
\bigr\rrVert ^{2}+\, \beta ^{2}\, \bigl\llVert
E^{+}(u,y,.)-E^{+}(u,x,.) \bigr\rrVert ^{2}
\bigr) du, %
\end{eqnarray*}
where in the last inequality we used the fact that $x^{2} + y^{2} \le (x+y)^{2}$ for every non-negative real numbers
$x$ and $y$.

This and Lemma \ref{mine} show that Hypothesis $H_{1}([0,1])$ is
satisfied.

%s4.2.2 #&#
\subsubsection{Proof of $H_{2}([0,1])$}%
\label{sec4.2.2}
Using the expressions of $A^{-}$ and $A^{+}$ given in (\ref{eq:A}) we
get
%
%e42 #&#
\begin{eqnarray}
\label{ine} %
&& \bigl\llVert \Delta _{y-x }G(t-.,x,.)
\bigr\rrVert ^{2}_{L^{2}([0,t]\times
{ \mathbb{R}})}
\nonumber
\\
&=& \int_{0}^{t} \int_{\mathbb{R}}
\bigl\llvert G(t-s,y,z)-G(t-s,x,z) \bigr\rrvert ^{2} \,ds \,dz
\nonumber
\\
&=& \int_{0}^{t} \Biggl[\frac{1}{2a_{1}\pi (t-s)}
\int_{-\infty }^{0} \bigl\llvert A^{-}(t-s,y,z)-A
^{-}(t-s,x,z) \bigr\rrvert ^{2} dz \Biggr] ds
\nonumber
\\
&& {}+ \int_{0}^{t} \Biggl[
\frac{1}{2a_{2}\pi (t-s)}\int_{0}^{\infty } \bigl\llvert
A^{+}(t-s,y,z)-A ^{+}(t-s,x,z) \bigr\rrvert
^{2} dz \Biggr] \,ds
\nonumber
\\
&\le & \int_{0}^{t} \frac{1}{2\pi (t-s)}\int
_{-\infty }^{0} \Delta _{max} (s,z) dz ds
\nonumber
\\
&&{}+ \int_{0}^{t} \frac{1}{2\pi (t-s)} \int
_{0}^{\infty } \Delta _{max}(s,z) dz \,ds,
\end{eqnarray}
where
\begin{eqnarray*}
\Delta _{\max }(s,z) &=& \max \biggl( \frac{1}{a_{1}}
\; \bigl\llvert A^{-}(t-s,y,z)-A^{-}(t-s,x,z) \bigr
\rrvert ^{2},
\\
&& \frac{1}{a_{2}}\; \bigl\llvert A^{+}(t-s,y,z)-A^{+}(t-s,x,z)
\bigr\rrvert ^{2} \biggr)
\\
&=& \max \biggl( \frac{1}{a_{1}} \bigl( \bigl( E^{-}(t-s,y,z)-
E^{-}(t-s,x,z) \bigr)
\\
&& {} - \beta \bigl( E^{+}(t-s,y,z)- E^{+}(t-s,x,z)
\bigr) \bigr) ^{2} ,
\\
&& \frac{1}{a_{2}} \bigl( \bigl( E^{-}(t-s,y,z)-
E^{-}(t-s,x,z) \bigr)
\\
&& {} + \beta \bigl( E^{+}(t-s,y,z)- E^{+}(t-s,x,z)
\bigr) \bigr) ^{2} \biggr)
\end{eqnarray*}
for every $t \in (0,T]$ and $(x,y) \in [0,1]^{2}; \; y > x$.

Since
\begin{equation*}
\max \bigl(\gamma _{1}(a-b)^{2},\gamma
_{2}(a+b)^{2} \bigr)\leqslant 2 \max (\gamma
_{1}, \gamma _{2}) \bigl(a^{2}+b^{2}
\bigr)\vadjust{\goodbreak}
\end{equation*}
for any $(a,b)\in { \mathbb{R}}^{2}$ and any $\gamma _{1},\gamma _{2}>0$,
we have
\begin{eqnarray*}
\Delta _{max } (s,z) &\le& 2 \max \biggl(\frac{1}{{a_{1}}},
\frac{1}{
{a_{2}}} \biggr) \bigl( \bigl\llvert E^{-}(t-s,y,z)-E^{-}(t-s,x,z)
\bigr\rrvert ^{2}
\\
&&{}+\beta ^{2} \bigl\llvert E^{+}(t-s,y,z)-E^{+}(t-s,x,z)
\bigr\rrvert ^{2} \bigr).
\end{eqnarray*}

Thus,
%
%e43 #&#
\begin{eqnarray}
\label{ine_1} %
&& \bigl\llVert \Delta _{y-x }G(t-.,x,.)
\bigr\rrVert ^{2}_{L^{2}([0,t]\times{ \mathbb{R}})}
\nonumber
\\[-2pt]
& \le & \int_{0}^{t} \frac{1}{2\pi (t-s)}\int
_{{\mathbb{R}}} \Delta _{max} (s,z)dz \xch{ds} {ds.}
\nonumber
\\[-2pt]
&\le & 2 \max \biggl( \frac{1}{{a_{1}}},\frac{1}{{a_{2}}} \biggr)
\Biggl[ \int_{0}^{t} \biggl[
\frac{1}{2\pi (t-s)} \int_{{ \mathbb{R}}} \bigl\llvert
E^{-}(t-s,y,z)
\nonumber
\\[-2pt]
&&{}-E ^{-}(t-s,x,z) \bigr\rrvert ^{2}dz \biggr]ds
\nonumber
\\[-2pt]
&& {}+ \beta ^{2}\, \int_{0}^{t}
\biggl[\frac{1}{2\pi (t-s)} \int_{{
\mathbb{R}}} \bigl\llvert
E^{+}(t-s,y,z)-E^{+}(t-s,x,z) \bigr\rrvert
^{2}dz \biggr]ds \Biggr] .\hspace*{21pt} %
\end{eqnarray}

This and Lemma \ref{lem:UBp1} show that Condition $H_{2}([0,1])$ is
also satisfied.

%s4.2.3 #&#
\subsubsection{Proof of $H_{3}([0,1])$}%
\label{sec4.2.3}
Consider $x, x' \in [0,1]$ and $h > 0$. We have
%
%e44 #&#
\begin{eqnarray}
\label{E1} %
&& \int_{0}^{t}
\int_{\mathbb{R}}\Delta _{h}G(t-s,x,z)\,\Delta
_{h}G \bigl(t-s,x',z \bigr)\,dz\,ds
\nonumber
\\[-2pt]
&=& \int_{0}^{t}\int_{\mathbb{R}}
\bigl(G(t-s,x+h,z)-G(t-s,x,z) \bigr)
\nonumber
\\[-2pt]
&& \bigl(G \bigl(t-s,x'+h,z \bigr)-G \bigl(t-s,x',z
\bigr) \bigr) dz \,ds
\nonumber
\\[-2pt]
&=& \int_{0}^{t} \Biggl[ \frac{m^{2}(t-s)}{{a_{1}}}
\int_{-\infty }^{0} \bigl(A^{-}(t-s,x+h,z)-A^{-}(t-s,x,z)
\bigr)
\nonumber
\\[-2pt]
&& \bigl(A^{-} \bigl(t-s,x'+h,z
\bigr)-A^{-} \bigl(t-s,x',z \bigr) \bigr) dz \Biggr]ds
\nonumber
\\[-2pt]
&+& \int_{0}^{t} \Biggl[ \frac{m^{2}(t-s)}{{a_{2}}}
\int_{0}^{\infty } \bigl(A^{+}(t-s,x+h,z)-A^{+}(t-s,x,z)
\bigr)
\nonumber
\\[-2pt]
&& \bigl(A^{+} \bigl(t-s,x'+h,z
\bigr)-A^{+} \bigl(t-s,x',z \bigr) \bigr)dz \Biggr] ds
\nonumber
\\[-2pt]
&=& L + K. %
\end{eqnarray}
Using the expression of $A^{-}$ (\ref{eq:A}), denoting $\tilde{x}= \frac{x}{\sqrt{a_{2}}}$,
$\tilde{x}'= \frac{x'}{\sqrt{a_{2}}}$ and $\tilde{h}= \frac{h}{\sqrt{a_{2}}}$, then making the change of
variables $z'= \frac{ z}{\sqrt{a_{1}}}$, we get
\begingroup
\abovedisplayskip=12.5pt
\belowdisplayskip=12.5pt
\begin{eqnarray*}
L & =& \int_{0}^{t} \Biggl[
\frac{(1-\beta )^{2}}{{2\pi a_{1}u}}\int_{-\infty
}^{0}
\bigl(E^{-}(u,x+h,z)-E^{-}(u,x,z) \bigr)
\\[-1pt]
&& \bigl(E^{-} \bigl(u,x'+h,z \bigr)-E^{-}
\bigl(u,x',z \bigr) \bigr) dz \Biggr]du
\\[-1pt]
&=& \frac{1}{\sqrt{a_{2}}}\int_{0}^{t} \sqrt{
\frac{a_{2}}{a_{1}}}\frac{(1-
\beta )^{2}}{\,2\pi u} \Biggl[\int^{0}_{-\infty }
\biggl(\exp \biggl(-\frac{(z'-
\tilde{x}-\tilde{h})^{2}}{2u} \biggr)
\\[-1pt]
&& {}-\exp \biggl(-\frac{(z'-\tilde{x})^{2}}{2u} \biggr) \biggr) \times \biggl(\exp
\biggl(-\frac{(z'-\tilde{x}'- \tilde{h} )^{2}}{2u} \biggr)
\\[-1pt]
&& {}-\exp \biggl(-\frac{(z'-\tilde{x}')^{2}}{2u} \biggr) \biggr) dz'\,
\Biggr]du.
\end{eqnarray*}

Now, using the expression of $A^{+}$ (\ref{eq:A}) and making the change
of variable $z'=\frac{z}{\sqrt{a_{2}}}$, the integral $K$ can be
written in the form
\begin{equation*}
K=\,\int_{0}^{t}\frac{1}{{2\sqrt{a_{2}}\,\pi u}} \bigl[
K_{1}+ \beta \,K_{2}+\beta \,K_{3}+\beta
^{2}\,K_{4} \bigr]\;du,
\end{equation*}
where
\begin{eqnarray*}
K_{1}&=&\int_{0}^{+\infty } \biggl(\exp
\biggl(-\frac{(z'-\tilde{x}-
\tilde{h})^{2}}{2u} \biggr)-\exp \biggl(- \frac{(z'-\tilde{x})^{2}}{2u} \biggr)
\biggr)
\\[-1pt]
&&{}\times \biggl(\exp \biggl(-\frac{(z'-\tilde{x}'-\tilde{h} )^{2}}{2u} \biggr)- \exp \biggl(-
\frac{(z'-\tilde{x}')^{2}}{2u} \biggr) \biggr) dz',
\\[-1pt]
K_{2}&=&\int_{0}^{+\infty } \biggl(\exp
\biggl(-\frac{(z'-\tilde{x}-
\tilde{h})^{2}}{2u} \biggr)-\exp \biggl(- \frac{(z'-\tilde{x})^{2}}{2u} \biggr)
\biggr)
\\[-1pt]
&&{}\times \biggl(\exp \biggl(-\frac{(z'+\tilde{x}'+\tilde{h})^{2}}{2u} \biggr)- \exp \biggl(-
\frac{(z'+\tilde{x}')^{2}}{2u} \biggr) \biggr) dz',
\\[-1pt]
K_{3}&=&\int_{0}^{+\infty } \biggl(\exp
\biggl(-\frac{(z'+\tilde{x}+
\tilde{h})^{2}}{2u} \biggr)-\exp \biggl(- \frac{(z'+\tilde{x})^{2}}{2u} \biggr)
\biggr)
\\[-1pt]
&&{}\times \biggl(\exp \biggl(-\frac{(z'-\tilde{x}'-\tilde{h})^{2}}{2u} \biggr)- \exp \biggl(-
\frac{(z'-\tilde{x}')^{2}}{2u} \biggr) \biggr) dz',
\\[-1pt]
K_{4}&=&\int_{0}^{+\infty } \biggl(\exp
\biggl(-\frac{(z'+\tilde{x}+
\tilde{h})^{2}}{2u} \biggr)-\exp \biggl(- \frac{(z'+\tilde{x})^{2}}{2u} \biggr)
\biggr)
\\[-1pt]
&&{}\times \biggl(\exp \biggl(-\frac{(z'+\tilde{x}'+\tilde{h})^{2}}{2u} \biggr)- \exp \biggl(-
\frac{(z'+\tilde{x}')^{2}}{2u} \biggr) \biggr) dz'.
\end{eqnarray*}
By using the change of variable $z=-z'$, we get
\begin{eqnarray*}
K_{2}+K_{3}&=&\int_{ \mathbb{R}} \biggl(
\exp \biggl(-\frac{(z'+\tilde{x}+
\tilde{h})^{2}}{2u} \biggr)-\exp \biggl(- \frac{(z'+\tilde{x})^{2}}{2u}
\biggr) \biggr)
\\
&&{}\times \biggl(\exp \biggl(-\frac{(z'-\tilde{x}'-\tilde{h})^{2}}{2u} \biggr)- \exp \biggl(-
\frac{(z'-\tilde{x}')^{2}}{2u} \biggr) \biggr) dz'
\end{eqnarray*}
\endgroup
and
\begin{eqnarray*}
&& K_{1}+\beta ^{2}\,K_{4}
\\
&=& \int_{ \mathbb{R}} \biggl(\exp \biggl(- \frac{(z'-\tilde{x}-\tilde{h})^{2}}{2u}
\biggr)-\exp \biggl(-\frac{(z'-
\tilde{x})^{2}}{2u} \biggr) \biggr)
\\
&& {}\times \biggl(\exp \biggl(-\frac{(z'-\tilde{x}'-\tilde{h} )^{2}}{2u} \biggr)- \exp \biggl(-
\frac{(z'-\tilde{x}')^{2}}{2u} \biggr) \biggr) dz'
\\
&& {}+ \bigl(\beta ^{2}-1 \bigr) \int^{0}_{-\infty }
\biggl(\exp \biggl(-\frac{(z'-
\tilde{x}-\tilde{h})^{2}}{2u} \biggr)-\exp \biggl(-
\frac{(z'-
\tilde{x})^{2}}{2u} \biggr) \biggr)\,
\\
&& {}\times \biggl(\exp \biggl(-\frac{(z'-\tilde{x}'-\tilde{h} )^{2}}{2u} \biggr)- \exp \biggl(-
\frac{(z'-\tilde{x}')^{2}}{2u} \biggr) \biggr) dz'.
\end{eqnarray*}
Therefore,
%
%e45 #&#
\begin{equation}
\label{E2} L+ K = L_{1} + \beta L_{2} + \biggl(
\sqrt{\frac{a_{2}}{a_{1}}}(1- \beta )^{2}+\beta ^{2}-1
\biggr) L_{3},
\end{equation}
where
\begin{eqnarray*}
L_{1} &=& \,\int_{0}^{t}
\frac{du}{{2\sqrt{a_{2}}\,\pi u}} \, \int_{ \mathbb{R}} \biggl(\exp \biggl(-
\frac{(z'-\tilde{x}-\tilde{h})^{2}}{2u} \biggr)-\exp \biggl(-\frac{(z'-
\tilde{x})^{2}}{2u} \biggr) \biggr)\,
\\
&&{}\times \biggl(\exp \biggl(-\frac{(z'-\tilde{x}'-\tilde{h})^{2}}{2u} \biggr)- \exp \biggl(-
\frac{(z'-\tilde{x}')^{2}}{2u} \biggr) \biggr) dz',
\\
L_{2} &=& \int_{0}^{t}
\frac{du}{{2\sqrt{a_{2}}\,\pi u}} \int_{
\mathbb{R}} \biggl(\exp \biggl(-
\frac{(z'+\tilde{x}+\tilde{h})^{2}}{2u} \biggr)- \exp \biggl(-\frac{(z'+\tilde{x})^{2}}{2u} \biggr) \biggr)
\,
\\
&&{}\times \biggl(\exp \biggl(-\frac{(z'-\tilde{x}'-\tilde{h})^{2}}{2u} \biggr)- \exp \biggl(-
\frac{(z'-\tilde{x}')^{2}}{2u} \biggr) \biggr) dz'
\end{eqnarray*}
and
\begin{eqnarray*}
L_{3} &=& \int_{0}^{t}
\frac{du}{{2\sqrt{a_{2}}\,\pi u}} \int_{-\infty
}^{0} \biggl(\exp
\biggl(-\frac{(z'-\tilde{x}-\tilde{h})^{2}}{2u} \biggr)- \exp \biggl(-\frac{(z'-\tilde{x})^{2}}{2u} \biggr)
\biggr)\,
\\
&&{}\times \biggl(\exp \biggl(-\frac{(z'-\tilde{x}'-\tilde{h})^{2}}{2u} \biggr)- \exp \biggl(-
\frac{(z'-\tilde{x}')^{2}}{2u} \biggr) \biggr) dz'.
\end{eqnarray*}

We first investigate the sign of the third integral. On the one hand,
$z' \le 0$, $\tilde{x} \ge 0$ and $\tilde{h} \ge 0$; thus, by virtue of
the fact that the function $x \longmapsto \exp (-x^{2})$ is increasing
on the interval $(- \infty , 0]$, we see that $L_{3} \ge 0$. On the
other hand, using the expression of $\beta $, given in (\ref{eq:f}),
%then
by the fact that $\rho _{2} \ge \rho _{1}$ (see Condition
(\ref{CrhoA})), we get
\begin{equation*}
\sqrt{\frac{a_{2}}{a_{1}}}(1-\beta )^{2}+\beta ^{2}-1
= \frac{-4\rho
_{1} \sqrt{a_{1}} \sqrt{a_{2}}}{(\rho _{2} \sqrt{a_{2}} + \rho
_{1} \sqrt{a_{1}})^{2}} (\rho _{2}-\rho _{1}) \le 0.
\end{equation*}

Therefore,
%
%e46 #&#
\begin{equation}
\label{E3} \biggl( \sqrt{\frac{a_{2}}{a_{1}}}(1-\beta )^{2}+
\beta ^{2}-1 \biggr) L _{3} \le 0.
\end{equation}

Now we will calculate explicitely the integral $L_{1}$.
%Denoting
With the notation
%$\mathcal{T}$ the following function
%
%e47 #&#
\begin{equation}
\label{T} \mathcal{T}(x,y,u):=\int_{ \mathbb{R}}\exp
\biggl(- \frac{(v-y)^{2}}{2u} \biggr)\,\exp \biggl(-\frac{(v-x)^{2}}{2u} \biggr)
\,dv, \hspace*{1cm}
\end{equation}
for every $ x, y \in { \mathbb{R}}$ and $u > 0$, $L_{1}$ can be written
in the form
\begin{eqnarray*}
L_{1} &=& \int_{0}^{t}
\frac{1}{2\sqrt{a_{2}} \pi u} \bigl\{ \mathcal{T} \bigl( \tilde{x}+ \tilde{h},
\tilde{x}'+ \tilde{h}, u \bigr)-\mathcal{T} \bigl(\tilde{x},
\tilde{x}'+ \tilde{h}, u \bigr)
\\
& & {}-\mathcal{T} \bigl(\tilde{x} + \tilde{h}, \tilde{x}', u
\bigr)+ \mathcal{T} \bigl( \tilde{x}, \tilde{x}', u \bigr) \bigr\}
\,du.
\end{eqnarray*}

By the changes of variables $V=v-x$
%then
and $W=\frac{y-x-2v}{2\sqrt{u}}$, we get
\begin{eqnarray*}
\mathcal{T}(x,y, u)&=& \int_{ \mathbb{R}}\exp \biggl(-
\frac{v^{2}}{2u} \biggr) \exp \biggl( \frac{-((y-x)-v)^{2}}{2u} \biggr)\;dv
\\
&=& \exp \biggl(-\frac{(y-x)^{2}}{4u} \biggr)\,\int_{ \mathbb{R}}
\exp \biggl(-\frac{((y-x)-2v)^{2}}{4u} \biggr)\,dv
\\
&=& \sqrt{\pi u}\,\exp \biggl(-\frac{(y-x)^{2}}{4u} \biggr).
\end{eqnarray*}

Thus, applying an integration by parts then the change of variables
$w=\frac{y-x}{2\sqrt{u}}$, we get
\begin{eqnarray*}
&& \int_{0}^{t}
\frac{1}{2\pi u}\mathcal{T}(x,y,u)\,du:=\int_{0}^{t}
\frac{1}{2\sqrt{
\pi u}}\,\exp \biggl(-\frac{(y-x)^{2}}{4u} \biggr)\,du
\\
&=& \sqrt{\frac{t}{\pi }}\,\exp \biggl(-\frac{(y-x)^{2}}{4t} \biggr) -
\frac{(y-x)^{2}}{4\sqrt{
\pi }}\int_{0}^{t}
u^{-3/2}\,\exp \biggl(-\frac{(y-x)^{2}}{4u} \biggr) \,du
\\
&=& \sqrt{\frac{t}{\pi }}\,\exp \biggl(-\frac{(y-x)^{2}}{4t} \biggr)- \,
\frac{1}{2}(y-x)\,\mathbf{erfc} \biggl(\frac{y-x}{2\sqrt{t}} \biggr) .
\end{eqnarray*}
Hence,
\begin{eqnarray*}
L_{1} &=& \sqrt{\frac{t}{a_{2}\pi }} \biggl\{2\,\exp
\biggl( -\frac{(\tilde{x}'-
\tilde{x})^{2}}{4t} \biggr)-\exp \biggl( -\frac{(\tilde{x}'-
\tilde{x} + \tilde{h})^{2}}{4t} \biggr)
\\
&& {}-\exp \biggl( -\frac{(\tilde{x}'-\tilde{x}-\tilde{h})^{2}}{4t} \biggr) \biggr\}
\\
&& {}-\frac{1}{2\sqrt{a_{2}}} \biggl\{ 2\, \bigl( \tilde{x}'-
\tilde{x} \bigr)\, \mathbf{erfc} \biggl( \frac{\tilde{x}'- \tilde{x}}{2\sqrt{t}} \biggr) - \bigl(
\tilde{x}'- \tilde{x} + \tilde{h} \bigr) \,\mathbf{erfc} \biggl(
\frac{
\tilde{x}'- \tilde{x} + \tilde{h}}{2\sqrt{t}} \biggr) \,
\\
&& {}- \bigl(\tilde{x}'- \tilde{x} - \tilde{h} \bigr)
\mathbf{erfc} \biggl( \frac{
\tilde{x}'- \tilde{x} - \tilde{h}}{2\sqrt{t}} \biggr) \biggr\}.
\end{eqnarray*}
The function $\tilde{h} \longmapsto L_{1} (\tilde{h}) = L_{1}$ is clearly twice
differentiable and via a simple calculation we get
\begin{equation*}
L_{1}'(\tilde{h}) = -\frac{1}{2\sqrt{a_{2}}} \biggl\{
\mathbf{erfc} \biggl( \frac{ \tilde{x}'- \tilde{x} - \tilde{h}}{2\sqrt{t}} \biggr) - \mathbf{erfc} \biggl(
\frac{\tilde{x}'- \tilde{x}+ \tilde{h}}{2
\sqrt{t}} \biggr) \biggr\}
\end{equation*}
and
\begin{equation*}
L_{1}''(\tilde{h})=
\frac{-1}{2\sqrt{a_{2}}\sqrt{\pi t}} \biggl[ \exp \biggl( -\frac{( \tilde{x}'- \tilde{x}-h)^{2}}{4t} \biggr) + \exp
\biggl( -\frac{(\tilde{x}'- \tilde{x} +h)^{2}}{4t} \biggr) \biggr] .
\end{equation*}
It's easy to check that $L_{1}(0)= L_{1}'(0)=0$ and that $L_{1}''$ is
bounded. From all this and by Taylor's formula, we obtain
%
%e48 #&#
\begin{equation}
\label{E4} L_{1}( \tilde{h})\leqslant C \,
\tilde{h}^{2} \le C h^{2}
\end{equation}
for every $h > 0$, where $C $ denotes a positive universal constant.

Applying the same techniques used in the above %study,
argunents, and since
$\beta \ge 0$
(see the expression of $\beta $ given in (\ref{eq:f}) and
Assumption (\ref{CrhoA}\xch{))}{)))}, we get
%
%e49 #&#
\begin{equation}
\label{E5} \beta L_{2}( \tilde{h})\leqslant C \,
\tilde{h}^{2} \le C h^{2}
\end{equation}
for every $h > 0$. Combining \xch{(\ref{E1}), (\ref{E}), (\ref{E3}), (\ref{E4})}{(\ref{E1}),(\ref{E}), (\ref{E3}),(\ref{E4})}
and (\ref{E5}), the proof of $H_{3}([0,1])$ is finished, and
consequently, the proof of Theorem \ref{Teo.part} is also finshed.

%r4 #&#
\begin{remark}
Considering an integer $d \ge 1$, one can extend Equation (\ref{e:2g})
to the $d$-dimensional case by introducing the following SPDE:
%
%e50 #&#
\begin{equation}
\label{e:2g-d} \lleft \{ %
\begin{array}{@{}rcl}
\displaystyle
\frac{\partial u(t, x)}{\partial t} &=&
\displaystyle
{\mathcal{L}}_{d} u(t, x) +\dot{W}_{d} (t,x) ;
\quad
t > 0, \; x = (x_{1}, \ldots, x_{d}) \in { \mathbb{R}}^{d} ,
\\
\noalign{\vskip2mm}
\displaystyle
u(0,.) &:= &0,\;
\end{array} %
\rright .
\end{equation}
with
%
%e51 #&#
\begin{equation}
\label{e:d-dimensional} {\mathcal{L}}_{d} = \sum
_{i,j =1}^{d} \frac{1}{r_{ij}(x)}
\frac{
\partial }{\partial x_{i}} \biggl( R_{ij}(x) \frac{\partial }{\partial
x_{j}} \biggr)
,
\end{equation}
where $x \longmapsto R_{ij}(x) $ and $x \longmapsto r_{ij}(x) $ are two measurable and bounded real-valued
functions satisfying
\begin{equation*}
r_{ij}(x) = r_{ji}(x), \qquad R_{ij}(x) =
R_{ji}(x),
\end{equation*}
and there exists a constant $\nu > 0$ such that
%
%e52 #&#
\begin{equation}
\label{hy} r_{ij}(x) \xi _{i} \xi
_{j} \ge \nu \llVert \xi \rrVert _{d}^{2}
\quad \mbox{and} \quad R_{ij}(x) \xi _{i} \xi
_{j} \ge \nu \llVert \xi \rrVert _{d}^{2}
\end{equation}
for every $x \in { \mathbb{R}}^{d}$, $\xi = (\xi _{1},\ldots, \xi _{d})
\in { \mathbb{R}}^{d}$ and $i, j \in \{ 1, \ldots, d\} $. In (\ref{hy}),
$\| . \|_{d}$ denotes the Euclidean norm in ${ \mathbb{R}}^{d}$, and in
(\ref{e:d-dimensional}) $\frac{\partial }{\partial x_{i}}$ denotes the partial derivative in the
distributional sense. The noise $W_{d}$ is a centered Gaussian field
$W_{d} = \{ W_{d}(t,C); t \in [0,T ], C \in B_{b}({\mathbb{R}}^{d}) \}$
with covariance
%
%e53 #&#
\begin{equation}
\label{e:3} {\mathbb{E}} \bigl(W_{d}(t,C)W_{d}(s,D)
\bigr) = (t \wedge s) \lambda _{d} (C \cap D),
\end{equation}
where $ \lambda _{d}$ denotes the Lebesgue measure on ${ \mathbb{R}}
^{d}$ and $B_{b}({\mathbb{R}}^{d})$ is the set of $\lambda _{d}$-bounded
Borel sub-sets of ${\mathbb{R}}^{d}$. In the particular case where
$d=1$, SPDE\index{stochastic partial differential equation (SPDE)} (\ref{e:2g-d}) is clearly reduced to Equation (\ref{e:2g}).

According to \cite{Aronson}, if we denote by $G_{d}$ the
fundamental solution\index{fundamental solution} associated to the operator ${\mathcal{L}}_{d} $,
then there exist two constants $D_{1} > 0$ and $D_{2} > 0$ such that
\begin{equation*}
G_{d}(t,x, y) \ge \frac{D_{1}}{t^{d/2}} \exp \biggl( -
\frac{D_{2}  \llVert  x-y
 \rrVert _{d}^{2}}{t} \biggr)
\end{equation*}
for every $t \in [0,T]$ and $(x,y)\in {\mathbb{R}}^{d}$. It follows then
that
%
%e54 #&#
\begin{equation}
\label{eqgd} \int_{0}^{t} \int
_{{{ \mathbb{R}}}^{d}} G_{d}^{2}(t-s,x, y) dy ds
\ge \int_{0}^{t} \int_{{{ \mathbb{R}}}^{d}}
\frac{D_{1}^{2}}{(t-s)^{d}} \exp \biggl( -\frac{2D_{2}  \llVert  x-y  \rrVert _{d}
^{2}}{t-s} \biggr) dy ds.
\end{equation}
Denoting by $I$ the right-hand side of Inequality (\ref{eqgd}), we have
%
%e55 #&#
\begin{eqnarray}
\label{eqgd2} %
I &=& \int_{0}^{t}
\frac{D_{1}^{2}}{(t-s)^{d}} \prod_{i=1}^{d}
\biggl( \int_{{{ \mathbb{R}}}} \exp \biggl( - \frac{2D_{2} (x_{i}-y_{i})^{2}}{t-s}
\biggr) dy_{i} \biggr) ds
\nonumber
\\
&=& \int_{0}^{t} \frac{D_{1}^{2}}{(t-s)^{d}} \biggl(
\sqrt{\frac{\pi (t-s)}{2D
_{2}}} \biggr)^{d} ds
\nonumber
\\
&=& D_{1}^{2} \biggl( \frac{\pi }{2D_{2}}
\biggr)^{d/2} \int_{0}^{t}
\frac{ds}{(t-s)^{d/2}},
\end{eqnarray}
where the second equality in (\ref{eqgd2}) is obtained by the change of
variables
$y_{i}' = (x_{i}-y_{i}) \sqrt{\frac{2D_{2}}{t-s}}$. Since the term
$\int_{0}^{t} \frac{ds}{(t-s)^{d/2}}$ is finite if, and only if
$d < 2$, from (\ref{eqgd}) and (\ref{eqgd2}) we deduce that, for every
$d \ge 2$ we have
\begin{equation*}
\int_{0}^{t} \int_{{{ \mathbb{R}}}^{d}}
G_{d}^{2}(t-s,x, y) dy ds = + \infty .
\end{equation*}

Therefore, for $d \ge 2$, the Wiener integral $\int_{0} ^{t} \int_{\mathbb{R}^{d} } G_{d}(t-s, x, y) W_{d}(ds, dy)$\index{Wiener  integral} is
not well-defined and consequently, the mild solution\index{mild solution} to Equation
(\ref{e:2g-d}) exists if, and only if $d= 1$.
\end{remark}

%\begin{appendix}
%\end{appendix}

\begin{acknowledgement}%[title={Acknowledgments}]
We thank very much the unknown referee for his valuable and profound
comments.
\end{acknowledgement}

%\begin{funding}
%\gsponsor[id=,sponsor-id=]{}
%\gnumber[refid=]{}
%\end{funding}

\end{document}